    \theoremstyle{plain}
    \newtheorem{thm}{Theorem}[section]
    \newtheorem{prop}[thm]{Proposition}
    \newtheorem{lem}[thm]{Lemma}
    \newtheorem{algo}[thm]{Algorithm}
    \theoremstyle{definition}
    \newtheorem{defn}[thm]{Definition}
    \newtheorem{rem}[thm]{Remark}
    \def \H{\mathscr{H}}
    \def \N{\mathbb{N}}
    \def \R{\mathbb{R}}
    \def \Z{\mathbb{Z}}
    \def \h{\sqrt{h}}
    \def \domain{{\mathbb{T}^3}}
    \newcommand {\supp} {\mathop \textup{supp}}
    \def \chara{\mathbf{1}}
    \def \eps{\varepsilon}
    \title{Analysis of thresholding for
codimension two motion by mean curvature: a gradient-flow approach}
    \author{Tim Laux\footnote{University of California, Berkeley, CA 94720-3840, USA. Please use {tim.laux@math.berkeley.edu} for correspondence.}
    \and Nung Kwan Yip\footnote{Purdue University, West Lafayette, IN 47907, USA}}
    \date{\today}
\begin{document}

      \maketitle
      
\newcommand{\nc}{\newcommand}

\nc{\Cal}[1]{{\mathcal {#1}}}
\nc{\Bf}[1]{{\bf {#1}}}
\nc{\Em}[1]{{\em {#1}}}
\nc{\Rm}[1]{{\rm {#1}}}
\nc{\nonu}{\nonumber}
\nc{\Headline}[1]{\noindent{\bf {#1}: }}

\nc{\EqnRef}[1]{(\ref{#1})}
\nc{\DefRef}[1]{\Bf{Definition \ref{#1}}}
\nc{\LemRef}[1]{\Bf{Lemma \ref{#1}}}
\nc{\ProRef}[1]{\Bf{Proposition \ref{#1}}}
\nc{\ThmRef}[1]{\Bf{Theorem \ref{#1}}}
\nc{\CorRef}[1]{\Bf{Corollary \ref{#1}}}
\nc{\SecRef}[1]{\Bf{Section \ref{#1}}}
\nc{\RemRef}[1]{\Bf{Remark \ref{#1}}}
\nc{\ChpRef}[1]{\Bf{Chapter \ref{#1}}}

\nc{\defref}[1]{Definition \ref{#1}}
\nc{\lemref}[1]{Lemma \ref{#1}}
\nc{\proref}[1]{Proposition \ref{#1}}
\nc{\thmref}[1]{Theorem \ref{#1}}
\nc{\corref}[1]{Corollary \ref{#1}}
\nc{\secref}[1]{Section \ref{#1}}
\nc{\remref}[1]{Remark \ref{#1}}
\nc{\chpRef}[1]{Chapter \ref{#1}}

\nc{\Proof}{\begin{proof}}
\nc{\EndProof}{\end{proof}}

\nc{\apriori}{{\it a priori }}
\nc{\Apriori}{{\it A priori }}
\nc{\Holder}{H\"{o}lder }

\nc{\Sty}{\displaystyle}
\nc{\MathSty}[1]{$\Sty{#1}$}
\def\Beqn#1\Eeqn{\begin{equation}#1\end{equation}}

\def\upchi{\raise2pt\hbox{$\chi$}}
\def\upnu{\raise0pt\hbox{$\nu$}}
\def\upstroke{\raise3pt\hbox{$|$}}
\nc{\Bdry}{\partial}
\nc{\Abs}[1]{\left|{#1}\right|}
\nc{\abs}[1]{|{#1}|}
\nc{\Ave}[1]{\bar{#1}}
\nc{\CurBrac}[1]{\left\{{#1}\right\}}
\nc{\curBrac}[1]{\{{#1}\}}
\nc{\Brac}[1]{\left({#1}\right)}
\nc{\brac}[1]{({#1})}
\nc{\SqrBrac}[1]{\left[{#1}\right]}
\nc{\sqrbrac}[1]{[{#1}]}
\nc{\BigBrac}[1]{\Big({#1}\Big)}
\nc{\bigBrac}[1]{\big({#1}\big)}
\nc{\BigCurBrac}[1]{\Big\{{#1}\Big\}}
\nc{\bigCurBrac}[1]{\big\{{#1}\big\}}
\nc{\BigSqrBrac}[1]{\Big[{#1}\Big]}
\nc{\bigSqrBrac}[1]{\big[{#1}\big]}
\nc{\BigAbs}[1]{\Big|{#1}\Big|}
\nc{\bigAbs}[1]{\big|{#1}\big|}
\nc{\converge}{\rightarrow}
\nc{\Converge}{\longrightarrow}
\nc{\WeakConverge}{\rightharpoonup}
\nc{\Del}{\triangle}
\nc{\Div}{\mbox{\rm{div}}}
\nc{\Equivalent}{\Longleftrightarrow}
\nc{\IntOverR}{\int_{-\infty}^{\infty}}
\nc{\IntOverRp}{\int_{0}^{\infty}}
\nc{\IntOverRm}{\int_{-\infty}^{0}}
\nc{\Imply}{\Longrightarrow}
\nc{\InnProd}[2]{\left\langle{#1},\,{#2}\right\rangle}
\nc{\innprod}[2]{\langle{#1},\,{#2}\rangle}
\nc{\BracInnProd}[2]{\left({#1},\,{#2}\right)}
\nc{\bracInnprod}[2]{({#1},\,{#2})}
\nc{\SqrinnProd}[2]{\left[{#1},\,{#2}\right]}
\nc{\sqrinnprod}[2]{[{#1},\,{#2}]}
\nc{\Lap}{\triangle}
\nc{\Lip}{\mbox{\rm{Lip}}}
\nc{\Lover}[1]{\frac{1}{#1}}
\nc{\Grad}{\nabla}
\nc{\MapTo}{\longrightarrow}
\nc{\Min}{\wedge}
\nc{\Max}{\vee}
\nc{\Norm}[1]{\left\|#1\right\|}
\nc{\norm}[1]{\|#1\|}
\nc{\SingleNorm}[1]{\left|#1\right|}
\nc{\singlenorm}[1]{|#1|}
\nc{\Pair}[2]{\left\langle{#1},\,{#2}\right\rangle}
\nc{\pair}[2]{\langle{#1},\,{#2}\rangle}
\nc{\DD}[1]{\frac{d}{d{#1}}}
\nc{\PDD}[1]{\frac{\partial}{\partial{#1}}}
\nc{\Spt}{\mbox{\rm{spt}}}
\nc{\spt}{\mbox{\rm{spt}}}
\nc{\SuchThat}{\ni}
\nc{\Sum}[2]{\sum_{#1}^{#2}}
\nc{\wtilde}[1]{\widetilde{#1}}
\nc{\Text}[1]{{\mbox{{\rm #1}}}}
\nc{\TextMath}[1]{\mbox{\,\,\,{#1}\,\,\,}}
\nc{\intersect}{\cap}
\nc{\Intersect}{\bigcap}
\nc{\union}{\cup}
\nc{\Union}{\bigcup}
\nc{\ColTwo}[2]
{\left(\begin{array}{c}{#1}\\{#2}\end{array}\right)}
\nc{\ColThree}[3]
{\left(\begin{array}{c}{#1}\\{#2}\\{#3}\end{array}\right)}
\nc{\ColFour}[4]
{\left(\begin{array}{c}{#1}\\{#2}\\{#3}\\{#4}\end{array}\right)}
\nc{\ColFive}[5]
{\left(\begin{array}{c}{#1}\\{#2}\\{#3}\\{#4}\\{#5}\end{array}
\right)}
\nc{\coltwo}[2]
{\begin{array}{c}{#1}\\{#2}\end{array}}
\nc{\colthree}[3]
{\begin{array}{c}{#1}\\{#2}\\{#3}\end{array}}
\nc{\colfour}[4]
{\begin{array}{c}{#1}\\{#2}\\{#3}\\{#4}\end{array}}
\nc{\colfive}[5]
{\begin{array}{c}{#1}\\{#2}\\{#3}\\{#4}\\{#5}\end{array}}

\nc{\Vol}[2]{\Bf{#1}({#2})}

\nc{\purple}[1]{\textcolor{purple}{#1}}
\nc{\red}[1]{\textcolor{red}{#1}}
\nc{\RED}[1]{\textcolor{red}{\bf #1}} 
\nc{\blue}[1]{\textcolor{blue}{#1}}
\nc{\BLUE}[1]{\textcolor{blue}{\bf #1}} 
\nc{\green}[1]{\textcolor{green}{#1}}
\nc{\GREEN}[1]{\textcolor{green}{\bf #1}} 


      \begin{abstract}
The Merriman-Bence-Osher (MBO) scheme, also known as thresholding or diffusion generated motion, is an efficient numerical algorithm for computing mean curvature flow (MCF). It is fairly 
well understood in the case of hypersurfaces. This paper establishes the first 
convergence proof of the scheme in codimension two.
We concentrate on the case of the curvature motion of a filament (curve) in 
$\mathbb{R}^3$. 
Our proof is based on a new generalization of the  minimizing movements 
interpretation for hypersurfaces (Esedoglu-Otto '15) by means of an energy that approximates the Dirichlet energy 
of the state function.
As long as a smooth MCF exists, we establish uniform energy estimates for the approximations away from the smooth solution and prove convergence towards 
this MCF.
The current result which holds in codimension two relies in a very 
crucial manner on a new sharp monotonicity formula for the thresholding energy. This is an improvement of an earlier approximate version.
	
     \medskip
 
     \noindent \textbf{Keywords:} Mean curvature flow, Ginzburg-Landau equation, Thresholding, MBO scheme, Higher codimension, Diffusion generated motion, Filament motion, Vortex motion
 
     \medskip

     \noindent \textbf{Mathematical Subject Classification:} 35A15,  65M12, 35B25, 35K08
     \end{abstract}
    
    
  \section{Introduction}
     
     \subsection{Motivation}
This paper is devoted to the analysis of the thresholding scheme in 
codimension two which may model the motion of vortices (points) in the plane, 
filaments (curves) in three-dimensional space or 
two-dimensional surfaces in four dimensions. 
For the sake of definiteness we will mostly focus on the---from our point of view---most relevant case of a curve in $\R^3$. 
Important applications of curvature-driven motion of filaments in $\R^3$ include superconductivity (magnetic flux tubes in type-I superconductors move by curve-shortening flow), fluid dynamics (where the motion of vortex lines is described by \emph{binormal} curvature flow), image processing (in particular for identifying vasculature in magnetic resonance angiography (MRA) images), and many more. Curiously, the curvature flow of a filament has also been used to define the curve-shortening 
flow of immersed planar curves past singularities
\cite{altschuler1992shortening}. 

\medskip

The analysis of motion by mean curvature for a hypersurface has a long history, 
starting from the fundamental work of Brakke \cite{brakke1978motion}. 
A range of techniques has
later been developed to further the understanding of such 
geometric evolutions. These include 
singular perturbations \cite{bronsard1991motion, 
chen1992generation, de1995geometrical, ilmanen1993convergence}, 
the level set formulation \cite{EvansSpruck1, chen1991uniqueness,
evans1992phase}, 
and variational time stepping or minimizing movements \cite{ATW93, LucStu95}. 
However,
for higher codimension curvature motions, there are relatively fewer results. 
We refer to the
works \cite{altschuler1992shortening, WangMuTaoClassical} for statements in the 
classical setting and \cite{WangMuTaoReview1, WangMuTaoReview2} for reviews
of the current status.
One reason is that the comparison principle which is used often in the 
hypersurface case is not applicable in higher codimension. 
However, variational techniques are quite versatile. 
The current paper uses a variational interpretation to 
analyze an efficient numerical scheme and prove its convergence to
motion by mean curvature of curves in three dimensional space.

\medskip

The idea of thresholding goes back to the 1992-paper \cite{MBO92} of Merriman, Bence and Osher treating the evolution of hypersurfaces by their mean curvature. The algorithm is henceforth often called the MBO scheme.
It is a two-step time discretization procedure easily described as:
Given an open set $\Omega_0$ of $\R^d$ and a time-step size $h>0$,
a sequence of open subsets $\CurBrac{\Omega_n}_{n\geq 1}$ of 
$\mathbb{R}^d$ is generated by alternating between
\begin{enumerate}
\item[1.] solving the linear heat equation for time $h$, starting from the
characteristic function of the set $\Omega_n$:
\Beqn\label{MBO-hyper-I}
\partial_t v = \Delta v,\quad\text{for $0< t < h$},\quad 
v(x,0) = \chi_{\Omega_n}(x) = \left\{
\begin{array}{ll}
1, & x\in \Omega_n,\\
0, & x\not\in \Omega_n,
\end{array}
\right.
\Eeqn
\end{enumerate}
and
\begin{enumerate}
\item[2.] projecting the function $v(x,h)$ onto $\CurBrac{0,1}$ to obtain the
new set $\Omega_{n+1}$:
\Beqn\label{MBO-hyper-II}
\begin{array}{l}
\Omega_{n+1} = \Big\{x: v(x,h) > \Lover{2}\Big\}.
\end{array}
\Eeqn
\end{enumerate}
In the following, we will use $u^n$ to denote $\chi_{\Omega_n}$, 
the state of the evolution at the $n$-th time step.
The second procedure above is also called the \Em{thresholding} step due to 
the use of the threshold value $\Lover{2}$. (Sometimes, the 
completely equivalent choice of $\CurBrac{-1,1}$-valued functions is used. 
In this case, $u^n$ and 
$\Omega_n$ are related by $u^n=2\chi_{\Omega_n}-1$. Then the threshold value 
is $0$ and the projection step 
above can be simply stated as $u^n=\frac{v}{\abs{v}}$.) The sequence of sets 
$\CurBrac{\Omega_n}_{n\geq 0}$ is shown to converge to
motion by mean curvature in the viscosity sense 
\cite{evans1993convergence, barles1995simple}. These proofs rely very much on
the comparison principle which is satisfied by the scheme above.
See also \cite{ishiiGeneralRadialKernel, ishii1999threshold}
for a generalization of the result for more general kernels.

\medskip

Thresholding for a filament in $\mathbb{R}^3$, 
due to Ruuth, Merriman, Xin, and Osher \cite{ruuth2001diffusion},
 is just as simple to describe as the hypersurface case. Consider 
an $\mathbb{R}^2$- or complex-valued function 
$u$ defined on $\mathbb{R}^3$ such that it has length one almost everywhere,
or equivalently a measurable function $u:\mathbb{R}^3\to\mathbb{S}^1$. 
Given a curve $\Gamma\subset\mathbb{R}^3$, it is fairly straightforward to 
construct a function $u$ such that it ``winds around'' $\Gamma$ with 
winding number equal to one, see Section \ref{rem Eu0} 
and Appendix \ref{appendix:u0}.
The curve is also the set where $u$ is ``singular''
(see Fig. \ref{fig:slices}). The thresholding scheme in this case is very similar to the one for hypersurfaces. In the first step, as in \eqref{MBO-hyper-I}, we diffuse the predecessor $u^{n-1}$, which is a unit vector field. The second step \eqref{MBO-hyper-II} is replaced by
projecting $v$ onto $\mathbb{S}^1$:
\Beqn
u^{n+1}(x)= \frac{v(x,h)}{\abs{v(x,h)}}.
\Eeqn

The main result of the present paper is the convergence of the above 
algorithm to the mean curvature flow of $\Gamma$. 
A heuristic argument, using asymptotic expansions is given in 
\cite{ruuth2001diffusion}. We will also briefly describe the underlying
formal computation in Appendix \ref{appendix asymptotic expansion}.
To the best of our knowledge, our work is the first convergence proof of 
the thresholding scheme in higher codimension.

\medskip

We spend a moment here to interpret the above thresholding scheme from the
point of view of Ginzburg-Landau functionals and their gradient flows.
These concepts appear often in the study of phase transition and interface
motions. The functional has the form
\Beqn\label{GLF}
\Cal{F}_\eps(u) =
\int_\Omega \frac12 \left| \nabla u\right|^2 +\frac1{\eps^2} W(u)\,dx
\Eeqn
where $u:\Omega\subseteq\mathbb{R}^n\to\mathbb{R}^m$ is the phase function
and
$W:\mathbb{R}^m\to\mathbb{R}_+$ is a (non-negative) potential function which
vanishes on some prescribed set. In the above, $\eps\ll 1$ 
is a small positive number. 
The gradient flow of $\Cal{F}_\eps$ (in the $L^2$-sense) is given by
\begin{equation}\label{GL}
\partial_t u_\eps = \Delta u_\eps  -\frac1{\eps^2}\nabla_u W(u_\eps)
\,\Brac{=-\frac{\partial\Cal{F}_\eps(u)}{\partial u}}.
\end{equation}
A direct computation gives the following energy dissipation law:
\Beqn\label{GL ED}
\frac{d}{dt}\Cal{F}_\eps(u_\eps)
= -\int\Abs{\partial_t u_\eps}^2\,dx.
\Eeqn
For both stationary and dynamic considerations, the singular limit
$\eps\to 0$ is one of the key questions to investigate.

\medskip

By setting different values for the dimensions of the ambient
space and the range, the functional can model various geometric objects. 
For example, to model hypersurfaces and their motions, 
 one may take $m=1$, i.e., $u$ is scalar-valued and $W(u)=(1-u^2)^2$.
In this case, $W$ vanishes on the discrete set $\CurBrac{-1, 1}$.
The energy is usually called the Cahn-Hilliard functional, whose dynamics \eqref{GL} are known as the Allen-Cahn equation due to their  first appearance in the materials science literature \cite{allen1979microscopic}.
The typical behavior is that the function $u_\eps$ will partition the ambient
space into two domains $\Omega_-$ and $\Omega_+$ on which $u_\eps$ takes values
roughly equal to $1$ and $-1$ separated by a narrow transition layer of width $ O(\eps)$. Hence in the limit $\varepsilon\to0$ this layer forms a sharp interface which can be described precisely as a minimal surface in the stationary regime or it evolves according to MCF in the dynamical case.
We refer to \cite{rubinstein1989fast} for a heuristic illustration
which has been proved rigorously in various mathematical settings---see the beginning of this introduction.
If $n=m=2$, the function $u_\eps$ is defined on (a subset of)
$\mathbb{R}^2$ and takes values in $\mathbb{R}^2$,
or equivalently is \emph{complex-valued}.
A common choice for the potential function is
$W(u)=(1-\abs{u}^2)^2$ so that the zero set of $W$ is 
the unit circle $\mathbb{S}^1$.
Hence any $u_\eps$ with reasonably low functional energy value 
$\Cal{F}_\eps(u_\eps)$ has point-wise norm approximately equal to one.
 In this case, by topological reasoning, 
$u_\eps$ can have points (vortices) as its singular (defect) sets. 
Such functionals are widely used in the modeling and analysis of vortices, 
their dynamics and interaction in superconductivity 
phenomena (see \cite{BBHBook, SerfatySandierBook}). Next, if we take $n=3$ and $m=2$,
i.e., $u_\eps$ is a complex valued function 
defined on (a subset of) $\mathbb{R}^3$, and the
same potential function $W(u)=(1-\abs{u}^2)^2$, then $u_\eps$ can 
incorporate curves as its singular sets, see Figure \ref{fig:slices}. 
This is also used in the modeling of vortex lines in superconductivity
as well as superfluids \cite{rubinsteinSelfInduced, rubinsteinPismen}. 
Even more generally,
the zero set of $W$ can consist of disjoint Riemannian manifolds. Then
the dynamics \eqref{GL} can model harmonic heat flows 
\cite{rubinsteinHarmonic, linPanWangHarmonic}. The above
description clearly demonstrates the range of applicability of the functional
\eqref{GLF} and explains the intensive mathematical activities surrounding it.
We defer to Section \ref{sec:related work} for more recent references 
of related work.

\medskip

Note that the gradient flow dynamics \eqref{GL} can be formally solved by
\emph{operator splitting}, alternating the following two steps:
\begin{eqnarray}
\text{(i) linear diffusion:}\,\, \partial_t u_\eps = \Delta u_\eps;
\quad
\text{(ii) fast reaction:}\,\, 
\partial_t u_\eps = -\Lover{\eps^2}\nabla_u W (u_\eps).
\end{eqnarray}
The key idea of \cite{MBO92} is to replace Step (ii) by instantly projecting
$u_\eps$ onto the zero set of $W$. Referring to the description at the 
beginning, we have that for the hypersurface case,
$u_\eps$ is projected onto $\CurBrac{-1,1}$ while for the filament case,
$u_\eps$ is projected onto $\mathbb{S}^1$. 
We remark that this projection 
step clearly generalizes to the case when $W$ vanishes on more general sets, for example
multiple disconnected copies of 
$\mathbb{S}^N$.

\medskip

Similar to the Ginzburg-Landau equation \eqref{GL}, MCF also has a gradient-flow structure. Indeed, it is the $L^2$-gradient flow of the area functional. This suggests to analyze the dynamics 
using variational methods. Such an approach has been implemented
in \cite{ATW93, LucStu95} for the MCF of hypersurfaces. 
De Giorgi \cite{de1993new} formalized this idea in a more general setting, which is now often called minimizing movements. We refer the reader to \cite{ambrosio2006gradient} for a more 
contemporary exposition.
The key idea of such a method is to discretize the evolution in time
(with time step $h>0$) and obtain the state $u^n$ at the $n$-th time 
step by minimizing the functional
\Beqn\label{MinMove}
E(u) + \frac1{2h}d^2(u,u^{n-1})
\Eeqn
where $E$ is the energy of the state, 
$d$ is the distance or metric compatible to the gradient structure of 
$E$, and $u^{n-1}$ is the state at the previous time step. 
The overall effect of minimizing \eqref{MinMove} is that the energy decreases according to some dissipation mechanism.
Furthermore, the sequence of minimizers $u^n$ formally satisfies
the implicit time discretization scheme for the gradient flow of $E$
\Beqn \label{EL abstract}
\frac{u^n - u^{n-1}}{h} = -\nabla E(u^n).
\Eeqn
The limit as $h\to 0$ of the sequence $\CurBrac{u^n}_{n\geq 0}$ thus obtained 
is then called a minimizing movements for $E$. We emphasize here that
the definition of the metric $d$ which provides appropriate
dissipation mechanism is just as important as the energy $E$ itself.
In fact, if the metric $d$ on the state space is the induced distance of some Riemannian metric (via shortest paths), then $\nabla E$ appearing in \eqref{EL abstract} is the gradient of the functional $E$ w.r.t.\ this Riemannian metric.

\medskip

The compatibility of the thresholding scheme to the above gradient-flow structure in form of a 
minimizing movements interpretation was first made by Esedo\u{g}lu and Otto in the work 
\cite{EseOtt14}. In the hypersurface case, they constructed an energy 
that approximates---or more precisely, $\Gamma$-converges to---the
interfacial area. Their approach also allows them to handle multi-phase
systems with a broad class of surface tensions. This generalization 
has been an open problem for several decades. Based on this minimizing movements principle,
the work \cite{laux2015convergence} provides a rigorous analysis of the scheme
in the dynamical setting and gives a convergence proof to motion by
mean curvature in the multi-phase case. 
We will comment more on these related works in \S \ref{sec:idea of proof} and \S \ref{sec:related work}.

\medskip

Interpreting thresholding as a minimizing movements scheme has practical implications as well.
We will see that in our case of codimension two, the minimizing movements principle furnishes a generalization of the scheme to incorporate Dirichlet or Neumann boundary conditions as well as a chemical potential leading to a pinning 
effect. 
An advantage of the current approach is that the same Gaussian kernel
works with very minor modification. Hence numerical efficiency is not
affected. In the next section, we will briefly describe the basis of our method of proof in the filament case.

\subsection{Idea of proof}\label{sec:idea of proof}
The protagonist in the present work is the approximate energy
\begin{equation}\label{def Eh}
E_h(u) = \frac1h \int \left(1-u\cdot G_h\ast u\right) dx
\end{equation}
defined for any unit vector fields $u\colon \R^3\to \mathbb{S}^1$; here $G_h$ denotes 
the heat kernel in $\R^3$ evaluated at time $h$, i.e., 
a Gaussian kernel of variance $2h$.
Note that the counterpart of \eqref{def Eh} in the hypersurface case \cite{EseOtt14} is given by
\Beqn
F_h(\chi) = \frac1\h \int \left(1-\chi\right) G_h\ast\chi \,dx, \quad \text{where }\chi \colon \R^3 \to \{0,1\}.
\Eeqn
There is a fundamental difference between them.
The energy $F_h$ measures the heat transfer from the set 
$\{\chi=1\}$ into its complement $\{\chi=0\}$ which is roughly equal to 
the $(d-1)$-dimensional area or measure of the boundary of $\{\chi=1\}$. 
This can be rigorously justified by that $F_h$ $\Gamma$-converges to 
$c_0\Cal{H}^{n-1}(\partial\{\chi=1\})$ for some constant $c_0$ 
\cite[Prop.\ A.1]{EseOtt14}.
On the other hand, the energy $E_h$ measures the distance of the diffused 
vector field $G_{h/2}\ast u$ to the sphere $\mathbb{S}^1$, i.e., 
it quantifies in how far it fails to be a unit vector field. 
Writing $E_h$ as a weighted average of squared finite difference
quotients (Lemma \ref{lem energy 1} \eqref{E finite diff})
shows its natural connection to the Dirichlet energy.
This can also be phrased in terms of the $\Gamma$-convergence of
$\frac12 E_h$ to the Dirichlet energy $\frac12 \int |\nabla u |^2 dx$.

The basis of our analysis is a minimizing movements interpretation of thresholding in our context of higher codimension. In resemblance to \eqref{MinMove},
given the state $u^{n-1}$ at the $(n-1)$-st step, the state $u^n$ is found 
by minimizing the functional
\Beqn\label{Intro MM}
\frac12 E_h(u) + \frac1{2h} \Norm{G_{\frac{h}{2}}*(u-u^{n-1})}^2_{L^2}
\Eeqn
(see Lemma \ref{lemma MM} below). 
The minimization principle \eqref{Intro MM} is in accordance with the gradient flow structure of the (harmonic map) heat flow, which is the $L^2$-gradient flow of the Dirichlet energy.

\medskip 
From \eqref{Intro MM}, we immediately
obtain some energy dissipation relation (Lemma \ref{lem estimate dtu}), which serves well as an a priori estimate.
However this estimate fails to fully capture the limiting dynamics as $h\to 0$.
To understand this well known fact, let us give some background on minimizing movements. 
While at first glance, a gradient flow $\partial_t u = - \nabla E(u)$ seems to need a smooth Riemannian structure, it is clear that the minimization problem \eqref{MinMove} makes sense in any metric space. 
This is the basis of De Giorgi's theory to define gradient  flows in metric spaces:
It is easy to see that the solution of a smooth gradient flow is characterized by the optimal rate of energy dissipation $\frac{d}{dt} E(u) \leq - \frac12 |\partial_t u|^2 -\frac12 |\nabla E(u)|^2 $ (since then equality holds in Young's and Cauchy's inequality).

It is worth noting that the energy dissipation rate $\frac{d}{dt} E(u) \leq -|\nabla E(u)|^2$ (or $\frac{d}{dt} E(u) \leq -|\partial_t u|^2$, respectively) is necessary but by no means does it characterize the solution.
Because of the degeneracy in the case of mean curvature flow however, it is more convenient to measure this rate only in terms of gradient of the area functional, i.e., the mean curvature. 
To then capture all information, one needs to monitor localized versions of the energy.
 Then under certain regularity conditions, this family of energy dissipation inequalities indeed characterizes the MCF.
 
When deriving either of these energy dissipation relations for limits of minimizing movements schemes, another technical difficulty appears: The a priori bound obtained from comparing $u^n$ to its predecessor in  \eqref{MinMove} fails to be sharp by a factor $\frac12$. However, this can be cured by considering the nonlinear interpolation between $u^{n-1}$ and $u^n$, choosing $u^{(n-1)+\lambda}$ to be a minimizer of $E(u)+\frac1{2\lambda h} d^2(u,u^{n-1})$. This interpolation was first proposed by De Giorgi and has been used recently for the localized energies of thresholding by Otto and the first author \cite{laux2017brakke}.

A nice feature of the current line of proof is that the technical difficulty if interpolating the state functions can be omitted as the precise prefactor of the metric term has no importance in our main estimate.
However, the estimate is still delicate in the sense that the prefactors of two other term need to match in order to cancel a diverging term.
This will be guaranteed by a new monotonicity formula.

\medskip

The foremost obstacle in the case of codimension two is the fact 
that the Dirichlet energy is not uniformly 
bounded near the filament which is exactly the place where the $u^n$ becomes 
singular. 
In fact $E_h(u^n)$ blows up with rate $\Abs{\log h}$ near the filament.
For remedy, we introduce a \emph{localized} version of $E_h$ 
(Definition \ref{threshold.energy} \eqref{def Eh local2}).
The localization is taken to be 
a truncated version of the the squared distance function 
$d^2(\cdot, \Gamma_t)$ to the actual solution of the filament MCF. 
The importance of distance function was first pointed out by
De Giorgi \cite{de1994barriers}. Later on, Ambrosio and Soner \cite{ambrosio1996level}
have used this idea to characterize higher codimensional geometric 
flows in terms of their distance function.
The work of Lin \cite{lin1998complex} further takes advantage of
the properties of the squared distance function to derive a localized energy 
dissipation law for the
the complex Ginzburg-Landau equation. (A more detailed description will be given 
at the beginning of Section \ref{sec:prop and lemmas}.)
Inspired by this last work,
exploiting the properties of the squared distance function, in particular
\eqref{phi heat eqn}--\eqref{phi expansion of heat operator}, 
we can similarly establish that the localized
thresholding energy is uniformly bounded (Proposition \ref{prop gronwall})
so that the location of the
singular set of $u^n$ exactly coincides with $\Gamma_t$.

Yet another new ingredient in the filament case is that 
we need to capture two equations:
(i)
the motion law of the filament---the set where $u$ is singular and 
(ii) the
evolution of the phase of $u$ \Em{away} from the filament.
The latter is due to the extra degree of freedom in the zero set $\mathbb{S}^1$
of $W$. To this end, we will derive two Euler-Lagrange equations 
(Lemma \ref{lem EL eq} \eqref{EL inner}--\eqref{EL outer weak})
for $u^n$ by considering \Em{inner} and \Em{outer} variations of $u^n$
\eqref{inner variations}--\eqref{outer variations}.
Note that for the thresholding scheme in the hypersurface case,
the limiting description is simply described by the MCF of the
interface $\Gamma$.
The state variable $u$ is identically equal to $+1$ and $-1$ away from
$\Gamma$.

Last but not the least, a sharp monotonicity formula (Lemma \ref{lem energy 2})
for the thresholding energy (as a function of the time step $h$) is used in a 
very crucial way. It is an improved version of an earlier 
``approximate monotonicity'' formula due to Esedo\u{g}lu-Otto in
\cite{EseOtt14}. So far it is only proved in codimension 
two and does not carry over immediately the the hypersurface 
case. In the hindsight, this is related to the fact that for complex-valued
$u$, in the Ginzburg-Landau functional \eqref{GLF}, for typical functions, 
the Dirichlet energy dominates the potential term. On the other hand,
for the scalar version (Allen-Cahn equation), there is equipartition of energy, i.e., the energy is equally distributed between the two terms. It will be interesting to investigate the monotonicity 
formula in a more general setting.

\subsection{Related work}\label{sec:related work}

Here we describe some related work on the analysis of thresholding scheme and 
some of its generalizations.

As mentioned before, the use of thresholding scheme for hypersurfaces has a 
long history, initiated by the work of Merriman, Bence, and Osher \cite{MBO92}. 
Soon after, rigorous proofs of convergence are given in 
Evans \cite{evans1993convergence} 
and Barles and Georgelin \cite{barles1995simple}. 
As these proofs are entirely based on the comparison principle, they are basically restricted to the special case of a single hypersurface.
Several recent works on thresholding scheme have overcome this restriction.
Esedo\u{g}lu and Otto's minimizing movements interpretation \cite{EseOtt14} generalized the scheme to arbitrary surface tensions and led to a series of conditional convergence results which are not based on the comparison principle but on the gradient-flow structure of MCF (of networks of interfaces).
Under the assumption that the total energy of the approximations converge to those of the limit, Otto and the first author proved that the limit solves a distributional formulation of motion by mean curvature, also in case of networks of hypersurfaces \cite{laux2015convergence}. (See also \cite{LauxSimon} for a 
proof of a similar result for a multi-phase Allen-Cahn system.)
Swartz and the first author extended these methods to incorporate external forces and volume constraints \cite{LauSwa15}.
Only recently, a \emph{local} minimality property of thresholding in the case of (networks of) hypersurfaces has been observed and used by Otto and the first author to prove that under the same assumption, this limit is also a unit-density Brakke flow \cite{laux2017brakke}. There the precise dissipation rate is fundamental in proving Brakke's inequality and the authors use De Giorgi's variational interpolations to obtain the precise constant.

The incorporation of anisotroy in the curvature motion is also of interest,
both mathematically and practically, due to again the simplicity of 
thresholding. One of the earliest work in this regard is 
\cite{ishii1999threshold}. 
It starts from a given (positive) convolution kernel and identity the
anisotropy. The ``inverse problem"---the construction of kernels for 
prescribed anisotropy---is considered in 
\cite{bonnetier2012consistency, elsey2016threshold}.
A technically difficult aspect is that in general the kernel must be 
necessarily non-positive. Hence the traditional proof of convergence is again not applicable in a straightforward way. Regarding this, another line of 
proof is constructed by Swartz and the second author \cite{SwaYip17}. 
This last work proves, by constructing an appropriate ansatz, consistency and
stability statements and a convergence rate of the scheme without using the 
comparison principle. Though currently it only considers the case of classical 
isotropic MCF, at the conceptual level, it can be applicable to a more general 
situation.

\medskip
Departing from the hypersurface case, in relation to the current paper,
we emphasize here work related to the motion of a filament in $\mathbb{R}^3$
which has codimension two. The convergence of the Ginzburg-Landau dynamics
to MCF when classical solution exists was proved in \cite{JerrardSonerMCF} and
\cite{lin1998complex}.
The work \cite{AmbrosioSonerVar} extended the result to varifold convergence
under the assumption that the density of the limit measure is bounded from 
below. This assumption was finally eliminated in the work 
\cite{bethuel2006convergence}.
A counter-part of the motion law \eqref{GL} is the consideration of 
Schr\"{o}dinger dynamics of \eqref{GLF}, written as
$i\partial_t u_\eps = \partial_u\Cal{F}_\eps(u)$. Heuristic
asymptotics lead to a limit singular measure that coincides with a filament 
evolving according to curvature motion along the \emph{bi-normal $\mathbf{B}$} 
to the curve. There are many
interesting open questions concerning this motion, regarding well-posedness
and approximation algorithms. We refer to \cite{jerrardICM} for a recent survey of this model.

Last but not the least, the very recent work
Osting and Wang \cite{osting2017generalized} discovered the same minimizing movements principle of the scheme \cite{ruuth2001diffusion} as ours
and used it to generalize the scheme from unit vector fields to matrix fields in $O(n)$, i.e., "unit" matrix fields w.r.t.\ the Frobenius norm. Furthermore, they provide promising numerical tests of this extension. While the limit $h\to0$ is not studied there, it seems that our convergence analysis, as described in Section \ref{sec:hmhf} should apply to this case as well. 
In the absence of singularities, the proof might simplify in the 
sense that one only needs to consider outer variations as in the case of 
Section \ref{sec:hmhf}. It might also be interesting to consider and classify the singularity structures that can appear.

\subsection{Structure of the paper}
In the next Section \ref{sec:filament}, we state our main convergence result 
\S\ref{sec:main res} and some remarks about it \S\ref{rem Eu0}.
Then in \S\ref{sec:prop and lemmas} we list all the technical lemmas
to be used. We highlight here the
(localized) minimizing movements principles Lemma \ref{lemma MM},
(localized) energy dissipation law Proposition \ref{prop gronwall},
the sharp monotonicity lemma \ref{lem energy 2}, and
the two Euler-Lagrange equations for the minimizers during each time
step Lemma \ref{lem EL eq}.
These are all proved in Section \ref{sec:proofs} which forms the bulk of 
the paper.
In Section \ref{sec:pinning}, we discuss further insights from the variational 
viewpoint, namely boundary conditions, vortex motion in two dimensions and the 
less singular harmonic map heat flow in higher dimensions.

    \section{Mean curvature flow of a filament in \texorpdfstring{$\R^3$}{R3}}\label{sec:filament}

Throughout the paper we will assume that a smooth mean curvature 
flow of an embedded curve $\Gamma_t$ starting from $\Gamma_0$ exists 
up to some time $T > 0$.
The flow can be expressed in terms of a parametrization 
$\gamma(\cdot, t)$ such that
$\Gamma_t=\gamma([0,1],t)$, where
\Beqn\label{MCF0}
\gamma: [0,1]\times[0,T)\MapTo\R^3,
\Eeqn
and it satisfies
     \begin{equation}\label{MCF}
      \frac{\partial}{\partial t}\gamma(\theta,t)
=  \kappa_{\gamma(\theta,t)}\mathbf{N}_{\gamma(\theta,t)},
\,\,\,\,\,\,\gamma([0,1], 0) = \Gamma_0.
     \end{equation}
The boundary conditions at $\theta=0$ and $\theta=1$ will
be specified later.
In the above, $\kappa$ denotes the curvature of $\Gamma_t$
at $\theta\in [0,1]$ and $\mathbf{N}$ is
unit normal vector pointing in direction of the derivative of the 
unit tangent vector. Although $\mathbf{N}$ is not defined when $\kappa =0$, 
the product $\kappa \mathbf{N}$ is always well-defined.
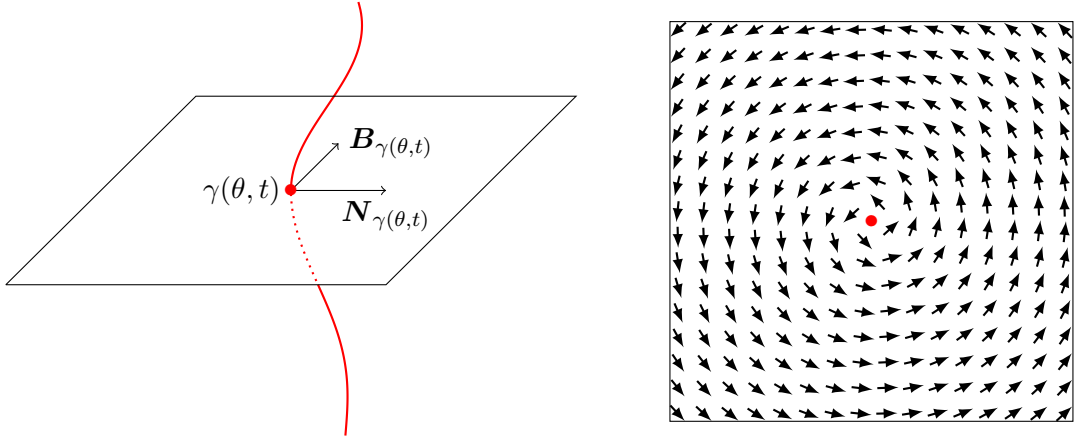
\begin{figure}
\centering
\begin{tikzpicture}[scale=2.5]
\draw [domain=-.48:0,smooth,variable=\x,red,dotted,thick] plot ({{-(cos(200*\x)-1)/10+\x*\x/10+3/2},{\x+1/2}});
\draw [domain=-1.3:-.5,smooth,variable=\x,red,thick] plot ({{-(cos(200*\x)-1)/10+\x*\x/10+3/2},{\x+1/2}});
\draw [domain=0:1,smooth,variable=\x,red, thick] plot ({{-(cos(200*\x)-1)/5-\x*\x/30+3/2},{\x+1/2}});
\draw  (0,0) -- (2,0) -- (3,1) -- (1,1) -- (0,0);
\draw [->] (3/2,1/2) -- + (1/2,0) node [below] {$\boldsymbol{N}_{\gamma(\theta,t)}$};
\draw [->] (3/2,1/2) -- + (1/4,1/4) node [right] {$\boldsymbol{B}_{\gamma(\theta,t)}$};

\node [red] at (3/2,1/2) {$\bullet$};
\node [black,left] at (3/2,1/2) {$\gamma(\theta,t)$};

\end{tikzpicture}
$\qquad$
\begin{tikzpicture}[scale=1]
 \draw (0.2,0.2) rectangle (5.5,5.5);  
  \begin{axis}
[
      view     = {2.3}{90},
      domain   = -1:1,
      samples  = 16,
      axis lines=none,
      axis equal
    ]
    \addplot3 [ thick, quiver={u={-y/(x^2+y^2)^(.5)}, v={x/(x^2+y^2)^(.5)}, scale arrows=.11,
               every arrow/.append style={-latex}}] (x,y,0);
  \end{axis}
   \node [red] at (2.85,2.85) {$\bullet$};
\end{tikzpicture}
\caption{Left: A filament in $\R^3$ with a horizontal slice of $\R^3$. 
Right: The initial conditions $u^0$ on the depicted slice ``wind" around the filament.}
\label{fig:slices}
\end{figure}
Note that short-time existence has been established in a very general framework by Gage and Hamilton \cite[Section 2]{gage1986heat}. 
See also Huisken-Polden \cite{HuiskenPoldenGraphProof} for solving the equation
using a graph coordinate system and \cite{WangMuTaoReview1, WangMuTaoReview2} 
for reviews of higher codimensional mean curvature flows.

We will state the algorithm of the thresholding scheme in terms
of the heat kernel on $\R^d$,
\Beqn
G_h(z)=\frac1{(4\pi h)^{d/2}} \exp\Big(-\frac{|z|^2}{4h}\Big)
\Eeqn
which is the solution operator of the linear heat equation and it solves
\begin{equation}\label{G solves heat eqn}
\partial_h G - \Delta G =0,
\quad G_{0} =\delta_0.
\end{equation}
Some basic facts about $G_h$ will be collected 
at the beginning of Section \ref{sec:proofs}.
    
    \medskip
    
We now present the algorithm for the filament thresholding scheme and 
our main convergence result. In order not to be distracted by
boundary conditions, we will work with periodic boundary condition.
Recall that in this setting, the configuration
or ``state'' of the algorithm at each time step is given by an 
$\R^2$- or $\mathbb{C}$-valued function $u$ defined on $\domain$. 
To be more precise, after each projection step, $u$ has unit length,
i.e., it is $\mathbb{S}^1$-valued.

    \subsection{Main result}\label{sec:main res}
    \begin{algo}\label{algo filament}
      Given a time-step size $h>0$ and the configuration 
$u^{n-1}:\domain\to\mathbb{S}^1$ at time $t=(n-1)h$, construct the configuration $u^n$ at time $t=nh$ by the following two operations:
      \begin{enumerate}
       \item Diffusion: convolve $u^{n-1}$ with the heat kernel, i.e.,
set $v^n:= G_h\ast u^{n-1}$;
       \item Projection: project $v^n$ onto 
the unit sphere, i.e., set $u^n := \frac{v^n}{|v^n|}$.
      \end{enumerate}
    \end{algo}
    We denote by $u^h$ the piecewise constant interpolation in time of the 
functions $u^0,u^1,\dots$ defined by
    \Beqn\label{def pw-const}
     u^h(x,t) := u^n(x) \quad \text{for } t\in[nh,(n+1)h).
    \Eeqn
We also define the following backward in time finite difference quotient
for any time dependent function $v$,
\Beqn\label{bw time diff}
\partial_t^h v := \frac{v(t) - v(t-h)}{h}.
\Eeqn

     Our main result is the following convergence of $u^h$
to a filament moving by its mean curvature.
    \begin{thm}\label{thm filament}
      Let $\Gamma_t$, $t\in[0,T)$ be a filament evolving smoothly by 
mean curvature flow \eqref{MCF0}-\eqref{MCF} 
in $\domain$ and assume that the initial conditions for Algorithm \ref{algo filament} are well-prepared in the sense of Definition \ref{def initial}.  Then the approximate solutions $u^h$ \eqref{def pw-const} 
obtained by Algorithm \ref{algo filament} 
converge as $h\downarrow 0$ to $\Gamma_t$ in the following sense.
      
      For every sequence $u^h$, there exists a subsequence 
      (still denoted by $u^h$) and a vector field 
$u \in H^1_{\text{loc}}((\domain\times(0,T)) \setminus \Gamma;\mathbb{S}^1)$ 
such that
      \begin{align}
      u^h \to &u & &\text{in } L^2(\domain \times (0,T))\label{comp u},\\
     \nabla G_{h} \ast u^h \rightharpoonup& \nabla u&  &\text{in }L^2_{\text{loc}}((\domain\times(0,T) ) \setminus \Gamma),\label{comp Du}\quad \text{and}\\
     \partial_t^h\big(G_{h} \ast u^h\big) \rightharpoonup &\partial_t u & &\text{in }L^2_{\text{loc}}((\domain\times(0,T) ) \setminus \Gamma)\label{comp dtu}.
    \end{align}
      In the limit $h\to 0$, the vorticity set concentrates only on 
$\Gamma_t$ in the sense that the Dirichlet energy of $u$ 
      stays bounded away from $\Gamma_t$.
      Furthermore, $u$ solves the harmonic map heat flow equation away 
from $\Gamma$.
    \end{thm}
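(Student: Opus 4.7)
The plan is to extract the three convergences and identify the limit equation by combining the a priori bounds provided by the minimizing movements framework with the localization by the distance to $\Gamma_t$. I would proceed in four stages: (i) assemble the uniform bounds coming from Proposition \ref{prop gronwall}, Lemma \ref{lemma MM}, and the monotonicity formula Lemma \ref{lem energy 2}; (ii) extract compactness; (iii) show concentration of the vorticity on $\Gamma_t$; (iv) pass to the limit in the outer-variation Euler--Lagrange equation of Lemma \ref{lem EL eq} to identify $u$ as a harmonic map heat flow.

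\textbf{Stage (i).} The (localized) minimizing movements principle Lemma \ref{lemma MM} gives, by the standard comparison $E(u^n)+\frac{1}{2h}d^2(u^n,u^{n-1})\le E(u^{n-1})$, the dissipation estimate
\begin{equation*}
\sum_{n\ge 1} h\,\bigl\|\partial_t^h G_{h/2}\ast u^h\bigr\|_{L^2(\domain)}^2 \le C,
\end{equation*}
while Proposition \ref{prop gronwall} controls the localized thresholding energy $E_h^{\varphi_t}(u^n)$ uniformly in $h$ and $n$, with $\varphi_t$ a truncation of $d^2(\cdot,\Gamma_t)$. By the finite-difference representation of $E_h$ in Lemma \ref{lem energy 1}, the latter translates into a uniform $L^2_{\text{loc}}$ bound for $\nabla G_{h/2}\ast u^h$ on any compact subset of $(\domain\times(0,T))\setminus\Gamma$. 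Here the sharp monotonicity formula Lemma \ref{lem energy 2} is what makes the dissipation constants in the localized identity match, so no $|\log h|$ error is lost after integrating against $\varphi_t$.

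\textbf{Stages (ii)--(iii).} Equipped with the space-time bounds, an Aubin--Lions--Simon argument applied to $\{G_{h/2}\ast u^h\}$ yields strong $L^2$ compactness; since $|u^h|=1$ and $G_{h/2}\ast u^h \to u^h$ in $L^2$ as $h\to 0$, the same sequence $u^h$ converges strongly in $L^2(\domain\times(0,T))$ to some $u$ with $|u|=1$ a.e. The weak limits of $\nabla G_h\ast u^h$ and $\partial_t^h(G_h\ast u^h)$ are then identified with $\nabla u$ and $\partial_t u$ respectively on $(\domain\times(0,T))\setminus\Gamma$, using the uniform localized bounds above. Combining the local Dirichlet bound with $|u|=1$ a.e.\ yields $u\in H^1_{\text{loc}}((\domain\times(0,T))\setminus\Gamma;\sph^1)$, and in particular the vorticity cannot concentrate away from $\Gamma_t$, which is the claimed bound on the Dirichlet energy.

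\textbf{Stage (iv).} To identify the equation, I would pass to the limit in the outer-variation Euler--Lagrange equation of Lemma \ref{lem EL eq}, which in the weak form reads, for any compactly supported test vector field $\xi$ with $\supp\xi\cap\Gamma=\emptyset$,
\begin{equation*}
\int \partial_t^h\bigl(G_h\ast u^h\bigr)\cdot\xi\,dx = -\int \nabla G_h\ast u^h : \nabla\xi\,dx + \int \lambda^h\,u^h\cdot\xi\,dx,
\end{equation*}
with the Lagrange multiplier $\lambda^h$ forced by $|u^h|=1$. The weak convergences above pass the linear terms to $\int \partial_t u\cdot\xi = -\int \nabla u:\nabla\xi+\int\lambda\,u\cdot\xi$, and the constraint $|u|=1$ identifies $\lambda = |\nabla u|^2$, giving the harmonic map heat flow $\partial_t u = \Delta u + |\nabla u|^2 u$ away from $\Gamma$.

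\textbf{Main obstacle.} The delicate point is the last step: the Dirichlet energy blows up like $|\log h|$ near $\Gamma_t$, so identification of the Lagrange multiplier and control of the nonlinear term $\lambda^h u^h$ are only valid locally away from $\Gamma$. One must therefore show that the localization cuts off \emph{before} the blow-up region at every time and that the error in replacing $G_h\ast u^h$ by $u^h$ in the $L^2$ strong limit is controlled uniformly by the localized Dirichlet bound; this is precisely where the sharp monotonicity of Lemma \ref{lem energy 2} is indispensable, since an approximate monotonicity would leave an uncontrolled logarithmic surplus.
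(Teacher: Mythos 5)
Your overall outline (localized energy bounds from Proposition \ref{prop gronwall} and Lemma \ref{lemma MM}, compactness, identification of the limiting equation via the outer-variation Euler--Lagrange equation) matches the structure of the paper's proof. Stages (i)--(iii) are essentially what the paper does, though the paper obtains the strong $L^2$ convergence of $u^h$ more directly by bounding $\|u^h - G_{h/2}\ast u^h\|_{L^2}^2 \lesssim h\,E_h(u^h) \lesssim h|\log h| \to 0$ via Jensen, rather than through an Aubin--Lions argument; either route is defensible.

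The genuine gap is in Stage (iv). You propose to pass to the limit in the Lagrange-multiplier weak form
\[
\int \partial_t^h\bigl(G_h\ast u^h\bigr)\cdot\xi\,dx = -\int \nabla G_h\ast u^h : \nabla\xi\,dx + \int \lambda^h\,u^h\cdot\xi\,dx,
\]
and then to ``identify $\lambda = |\nabla u|^2$'' from the constraint $|u|=1$. This does not go through under merely \emph{weak} $H^1$ convergence away from $\Gamma$: the multiplier $\lambda^h$ is (a discrete version of) $|\nabla u^h|^2$, a quadratic function of a weakly convergent sequence, and its weak-$*$ limit need not equal $|\nabla u|^2$ -- the defect measure is exactly the obstruction. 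Moreover, the product $\lambda^h u^h$ is a weak-times-strong product only if you already know $\lambda^h$ converges in some reasonable sense, which you do not. The paper circumvents this by using the antisymmetrized (``wedge'' or compensated-compactness) form \eqref{EL outer weak}, which is the actual content of Lemma \ref{lem EL eq}(ii): testing the $i$-th component of the Euler--Lagrange equation against $u_j \zeta$ and antisymmetrizing in $(i,j)$ cancels both the Lagrange multiplier term and the quadratic term $\nabla u_i \cdot \nabla u_j\,\zeta$, leaving an expression in which every product has one factor converging strongly in $L^2$ (a component of $u^h$) and one converging weakly (a derivative of $G_h\ast u^h$). This bilinear structure is what makes the passage to the limit legitimate; once the limit identity \eqref{HMHF compensated compactness} is established it is the standard weak formulation of the harmonic map heat flow $\partial_t u = \Delta u + |\nabla u|^2 u$ \`a la Chen/Shatah. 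Without that antisymmetrization trick, your Stage (iv) as written would require strong $H^1_{\text{loc}}$ convergence of $u^h$ away from $\Gamma$, which the a priori estimates available do not give and which the theorem does not assert.

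A smaller inaccuracy: the role you assign to the sharp monotonicity Lemma \ref{lem energy 2} is slightly off. Its crucial appearance is inside the proof of Proposition \ref{prop gronwall}, where the estimate $\int |\nabla G_{h/2}\ast u^h|^2 \le E_h(u^h)$ allows the two $|\log h|$-divergent bulk contributions (from the Hessian term and the heat-operator term acting on $\phi$) to cancel exactly; it is not really needed again in the compactness and limit-identification steps of Theorem \ref{thm filament} once Proposition \ref{prop gronwall} is in hand.
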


We pause to elaborate the above statement. Further explanation and remarks 
will be given in Section \ref{rem Eu0}.

Our paper crucially makes use of the following energy functional and its
localized version.
\begin{defn}[Thresholding energy]\label{threshold.energy}
Let $h>0$.
For any unit vector field, $u:\domain\to\mathbb{S}^1$, we define the energies
    \Beqn\label{def Eh local}
     E_h(u) := \frac1h \int \left(1-u\cdot G_h\ast u\right) dx.
    \Eeqn
and its localized version, which is 
defined for any $\psi :\domain\to\mathbb{R}$ as:
    \Beqn\label{def Eh local2}
     E_h(u, \psi) := \frac1h \int \psi\left(1-u\cdot G_h\ast u\right) dx.
    \Eeqn
\end{defn}
As to be seen later in Lemma \ref{lem energy 1}, the above functionals 
approximate the Dirichlet energy $\int |\nabla u|^2dx$ of $u$.

 \medskip
  
    The following lemma is the basis of our analysis. It states that---similar to thresholding for hypersurfaces, cf.\ \cite{EseOtt14},---also in our case of higher codimension, thresholding can be interpreted as a minimizing movements scheme.
    Furthermore, the lemma establishes a localized version of this minimizing movements interpretation similar to the one for hypersurfaces in \cite{laux2017brakke}.

    \begin{lem}\label{lemma MM} 
    Each time step $u^{n-1} \mapsto u^{n} = \frac{G_h\ast u^{n-1}}{|G_h\ast u^{n-1}|}$ of the thresholding scheme (Algorithm \ref{algo filament}) 
    is equivalent to minimizing
    \begin{equation}\label{MM}
     E_h(u) + \frac1h \int \left( u-u^{n-1}\right) \cdot  G_h\ast\left( u-u^{n-1}\right) dx
    \end{equation}
    among all $u\colon \domain \to \R^2$ with $|u|\leq 1$ a.e.
    In particular, we have the following energy-dissipation estimate for the piecewise constant interpolation $u^h$ \eqref{def pw-const},
    \begin{equation}\label{energy estimate}
     E_h(u^h(T)) + \int_0^T \int \big| G_{h/2}\ast \partial_t^h u^h\big|^2 \,dx\,dt \leq E_h(u^0).
    \end{equation}
    Furthermore, for any non-negative test function $\psi\geq0$, $u^{n}$ 
    minimizes the following localized version of \eqref{MM},
    \begin{equation}\label{MM local}
     E_h(u,\psi) + \frac1h \int \psi \left( u-u^{n-1}\right) \cdot  G_h\ast\left( u-u^{n-1}\right) dx
     +\frac1h \int \left(u - u^{n-1}\right) \cdot \left[ G_h \ast, \psi \right] u^{n-1} \,dx
    \end{equation}
    among all $u\colon \domain \to \R^2$ with $|u|\leq 1$ a.e.
    \end{lem}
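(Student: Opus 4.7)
The plan is to expand each of the three functionals and observe that the quadratic-in-$u$ contributions from $E_h(u)$ and from the metric term cancel, leaving a linear functional in $u$ whose constrained minimizer over $\{\abs{u}\leq 1\}$ can be computed pointwise. Concretely, write $v^n := G_h\ast u^{n-1}$. For \eqref{MM}, summing $E_h(u) = \frac1h\int(1-u\cdot G_h\ast u)\,dx$ with $\frac1h\int(u-u^{n-1})\cdot G_h\ast(u-u^{n-1})\,dx$ makes the two copies of $\pm\frac1h\int u\cdot G_h\ast u\,dx$ cancel and leaves $-\frac2h\int u\cdot v^n\,dx$ plus a constant depending only on $u^{n-1}$. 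Pointwise Cauchy--Schwarz under $\abs{u}\leq 1$ forces the maximizer $u(x) = v^n(x)/\abs{v^n(x)}$ wherever $v^n\neq 0$ (any unit vector works on the null set), which is exactly Step 2 of Algorithm \ref{algo filament}.

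For \eqref{energy estimate}, I would use $u^{n-1}$ as a competitor in the $n$-th minimization: the metric term vanishes, so the minimization property yields
\begin{equation*}
 E_h(u^n) + \frac1h \int (u^n-u^{n-1}) \cdot G_h\ast(u^n-u^{n-1})\,dx \leq E_h(u^{n-1}).
\end{equation*}
The semi-group identity $G_h = G_{h/2}\ast G_{h/2}$ together with the self-adjointness $\int f\cdot G_{h/2}\ast g\,dx = \int (G_{h/2}\ast f)\cdot g\,dx$ rewrites the dissipation term as $\frac1h\norm{G_{h/2}\ast(u^n-u^{n-1})}_{L^2}^2$. Telescoping over $n$ and recognizing the resulting sum as a Riemann sum (using $\partial_t^h u^h|_{[nh,(n+1)h)} = (u^n-u^{n-1})/h$) gives \eqref{energy estimate}.

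For the localized version \eqref{MM local} the same expansion strategy works, but the scalar weight $\psi$ breaks the self-adjointness of the weighted bilinear form and produces two distinct asymmetric cross-terms: $-\frac1h\int u\cdot\psi G_h\ast u^{n-1}\,dx$ (directly) and $-\frac1h\int u\cdot G_h\ast(\psi u^{n-1})\,dx$ (obtained from $\int \psi u^{n-1}\cdot G_h\ast u\,dx$ via self-adjointness of $G_h$). The role of the added commutator term is precisely to convert the second asymmetric piece into a second copy of the first: using $\sqrbrac{G_h\ast,\psi}u^{n-1} = G_h\ast(\psi u^{n-1}) - \psi G_h\ast u^{n-1}$, the $u$-dependent part of $\frac1h\int(u-u^{n-1})\cdot\sqrbrac{G_h\ast,\psi}u^{n-1}\,dx$ is exactly $\frac1h\int u\cdot G_h\ast(\psi u^{n-1})\,dx - \frac1h\int u\cdot\psi G_h\ast u^{n-1}\,dx$. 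After this correction, and after the quadratic pieces cancel against $E_h(u,\psi)$, the $u$-dependence of \eqref{MM local} collapses to $-\frac2h\int\psi\,u\cdot v^n\,dx$ plus a $u$-independent constant. Since $\psi\geq 0$, pointwise maximization of $\psi u\cdot v^n$ under $\abs{u}\leq 1$ again yields $u(x) = v^n(x)/\abs{v^n(x)}$, so the same $u^n$ minimizes \eqref{MM local} for every $\psi\geq 0$. The main subtlety is this last bookkeeping step: recognizing that the specific commutator correction $\sqrbrac{G_h\ast,\psi}u^{n-1}$ is exactly what symmetrizes the $\psi$-weighted bilinear form and makes the minimizer $\psi$-independent; everything else follows from the semi-group identity and self-adjointness of convolution with $G_h$.
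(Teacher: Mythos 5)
Your proposal is correct and follows essentially the same approach as the paper: both reduce the minimization to the pointwise problem $\max_{\abs{u}\leq 1} u\cdot v^n$ via the algebraic cancellation of quadratic terms, use self-adjointness of $G_h\ast$ together with the commutator term to symmetrize the $\psi$-weighted cross-terms in \eqref{MM local}, and obtain \eqref{energy estimate} by comparing with the competitor $u^{n-1}$ and invoking the semi-group identity. The only cosmetic difference is direction of argument: the paper starts from the pointwise maximization, rewrites it into \eqref{MM local}, and then specializes to $\psi\equiv 1$; you expand \eqref{MM} and \eqref{MM local} and collapse them to the pointwise problem, which is the same algebra read backwards.
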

The analogy of the energy dissipation law \eqref{energy estimate}
for \eqref{MM local} with appropriate choice of the localization function 
$\psi$ is the key technical result of our approach
and will be stated in Proposition \ref{prop gronwall}.

In the formula \eqref{MM local}, note the appearance of the
commutator $[G_h\ast, \psi]$ between the 
convolution with $G_h$ and multiplication by $\psi$ which is defined as:
\Beqn\label{commutator}
[G_h\ast, \psi]f = G_h\ast(\psi f) - \psi(G_h\ast f).
\Eeqn
    
\begin{defn}\label{def initial}
    		The initial datum $u^0 \colon \domain \setminus \Gamma_0 \to \mathbb{S}^1$ is called well-prepared if the following two conditions hold:
    		\begin{enumerate}
    		\item the approximate energies blow up logarithmically: there exist constants $0<c<C<\infty$ such that
      \begin{equation}\label{Eu0}
      c\left|\log h\right|  \leq E_h(u^0) \leq C \left|\log h\right|;
      \end{equation}
       \item away from the filament $\Gamma^0$, the approximate energies stay bounded
      \begin{equation}\label{Eu0phi}
       E_h(u^0,\phi_\sigma(0)) \leq C(\sigma),
      \end{equation}
      where $\phi_\sigma(0)$ denotes the smoothly truncated squared distance function to $\Gamma_0$ defined in \eqref{def phi}. 
Here $\sigma$ is some positive number depending only on the
curves $\Gamma_t, t\in[0,T]$ but not on $h$.
      \end{enumerate}
    \end{defn}

\subsection{Remarks about the main result}\label{rem Eu0}
Here we give some further remarks about our main convergence result.

(1) Similar to the Ginzburg-Landau approximation in the case of vortex in $\mathbb{R}^2$, for smooth initial conditions $\Gamma_0$ close to a straight 
line parallel to the $x_3-$axis, one can easily construct initial data $u^0$ satisfying
Definition \ref{def initial}.
Specifically, let $\Gamma_0 \subset \R^3$ be a curve given in the 
form of 
\[
\Gamma_0 = \{(\gamma(x_3),x_3) \colon x_3\in[0,1)\},
\]
where $\gamma\colon [0,1)\to \R^2$ is a smooth $1$-periodic 
function. Then
\Beqn\label{construct u0}
u^0(x) := \pm \frac{(x'-\gamma(x_3))^\perp}{|x'-\gamma(x_3)|}\quad \text{for }x=(x',x_3)\in \R^3\setminus \Gamma_0
\Eeqn
is well-prepared (locally around the filament $\Gamma_0$). 
The precise computation is shown in Appendix \ref{appendix:u0}.
Another equivalent choice is to simply use the radial vector in the normal plane of the curve, i.e., \eqref{construct u0} without the rotation  $\perp$.

(2) The bulk of the paper is presented in the easiest case of periodic boundary conditions. We only assume this to omit technical difficulties which would pollute the proof. The interested reader is referred to the discussions in Section \ref{sec:pinning} considering boundary conditions and Appendix \ref{appendix whole space} considering the whole space.

Our proof does not use the fact that we work in three dimensions. In fact the proof applies word by word for any codimension-two mean curvature flow. Then the vector field $u$ is defined on (subsets of) $\R^d$ with values in $\mathbb{S}^1$ and the energy $E_h$ takes the exact same form.
However, we prefer to keep the language simple and restrict ourselves to the physically most relevant case of a filament in $\R^3$.

(3) The term vorticity set refers to the 
support of the limit of the measures
corresponding to the rescaled thresholding energies.
To be precise, we define the measure $\mu_h$ as
\[
\mu_h(t) = 
\frac1{\Abs{\log h}h} \Brac{1-u^h(x,t)\cdot G_h* u^h(x,t)}dx.
\]
Then the vorticity set is given by $\supp (\mu(t))$ where 
$\mu(t) = \lim_h \mu_h(t)$.
The analogous concept also exists for the Ginzburg-Landau dynamics \eqref{GL}
in which the measure is defined as
\[
\mu_\eps(t) = \Lover{\Abs{\log\eps}}\Brac{
\Lover{2}\Abs{\nabla u_\eps(x,t)}^2 + \Lover{4\eps^2}(1-\Abs{u_\eps(x,t)}^2)^2}\,dx
\]
For the case of filament motion, Lin \cite[Theorem 4.1]{lin1998complex} 
showed that the limit $\mu(t) = \lim_{\eps\to0}\mu_\eps(t)$ satisfies
$c_1\mathcal{H}^1\lfloor\Gamma_t
\leq \mu(t) \leq c_2\mathcal{H}^1\lfloor\Gamma_t$ for some constants $c_1$
and $c_2$.
This is consistent with the fact that the limit of $u_\eps(\cdot, t)$
still winds around $\Gamma_t$ with winding number one. Our current result
only states that the limit $\mu(t)=\lim_{h\to0} \mu_h(t)$ satisfies
$\mu(t) \leq c_2\mathcal{H}^1\lfloor\Gamma_t$. However, we expect a lower
bound to be feasible if one can show that the thresholding energy is
bounded from below by $c\abs{\log h}$ for typical functions $u$ with a nontrivial winding number around a curve. We leave this latter 
statement to a future project.

(4) Note that the limit description is given by two dynamical equations. 
One is for the vorticity set $\Gamma_t$ which evolves by MCF. The other is the
evolution of $u$ away from $\Gamma_t$. It is given by the harmonic
heat flow on $\mathbb{S}^1$. Precisely, $u$ satisfies
\Beqn\label{HHF}
\partial_t u = \Delta u + \Abs{\nabla u}^2 u
\Eeqn
which is the ($L^2$-)gradient flow for the Dirichlet energy
\MathSty{\frac12\int\Abs{\nabla u}^2\,dx} for $\mathbb{S}^1$-valued function $u$.
Another maybe more transparent description can also be given. Away from
$\Gamma_t$, if we write $u(x,t)$ (locally) as $e^{i\theta(x,t)}$ for some
phase function $\theta$, then \eqref{HHF} is equivalent to
\[
\partial_t \theta = \Delta \theta.
\]

(5) The result stated in \eqref{comp Du} 
gives that $u^h$ converges weakly to $u$ in $H^1$. We believe this can be 
improved to be strong convergence in $H^1$. Such a statement is proved for the 
Ginzburg-Landau dynamics \eqref{GL} \cite[Section 5]{lin1998complex}. 
The usual strategy in establishing this is to show that small energy implies 
that $\abs{u}$ is close to one, and then higher order regularity is proved
by means of some blow-up argument. It will be interesting to have similar
statement for the thresholding scheme. Furthermore, it is also of
practical importance to have a convergence rate. We defer these issues to 
future works.

\medskip

    Throughout the paper we will make use of the following notation.
    By $C$ and $C(\sigma)$ we denote generic constants independent of the time-step size $h$, where $C(\sigma)$ may depend on the parameter $\sigma$.
The dependence on $\sigma$ is not important in this paper.
In particular we may allow $C(\sigma)$ tend to zero or infinity as 
$\sigma\to0$. However, the asymptotics in terms of $h$ is crucial in our 
analysis and will be spelled out explicitly.
We write $A\lesssim B$ if there exists a generic constant $C<\infty$ such that $A\leq C\,B$. 
If a quantity $A$ stays bounded by $B$ as $h\to0$, we write $A=O(B)$.
The same applies for $A=o(B)$, which means $\frac AB \to0$ as $h\to0$. 
In particular, we will use $O(1)$ and $o(1)$ referring to constants
which are bounded and convergent to $0$ as $h\to 0$.
For simplicity, we often omit the notation $h\to0$.
Furthermore, to describe asymptotics at the heuristic level, we 
often use the symbol $\approx$ which will always be followed by rigorous 
explanations.
By $\int \,dx $ we denote the integral $\int_{\domain} \,dx$, while $\int \,dz$ denotes the integral $\int_{\R^3} \,dz$.    
    

    \subsection{Main propositions and lemmas}\label{sec:prop and lemmas}
    
    We assume the existence of a smooth mean curvature flow 
$\Gamma_t$ \eqref{MCF0}-\eqref{MCF} and will exploit the
properties of the squared distance function to $\Gamma_t$ 
to construct a localization function.
Precisely, for $\sigma > 0$, we consider the function
$\phi=\phi_\sigma(x,t)$
which at any time $t$ is (a truncated version of) the squared distance to the 
curve $\Gamma_t$ defined as
    \begin{equation}\label{def phi}
      \phi(x,t) = \phi_\sigma(x,t) :=  \frac12 f_\sigma(d(x,\Gamma_t)),
    \end{equation}
    where $d(x, \Gamma) := \inf \{ |x-y| \colon y\in \Gamma\}$ is the distance function to the set $\Gamma$ and $f_\sigma\colon (0,\infty)\to (0,\infty)$ is a smooth monotone non-decreasing function such that
    \begin{equation}\label{def f}
      f_\sigma(\rho) =\begin{cases}
                       \rho^2 &\text{if } \rho <\sigma\\
		       4\sigma^2& \text{if } \rho>2\sigma,
                      \end{cases}
    \end{equation}
    cf.\ Fig.\ \ref{fig:cutoff}.
    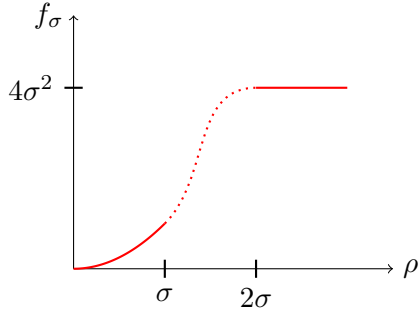
\begin{figure}
    \centering
      \begin{tikzpicture}[scale=1.2]
	\draw [->] (0,0) --  (3.5,0) node[right]{$\rho$};
	\draw [->] (0,0) --  (0,2.8) node[left]{$f_\sigma$};
	\draw[thick,domain=0:1,smooth,variable=\radius,red] plot ({\radius,{\radius*\radius/2}});
	\draw[thick,domain=2:3,smooth,variable=\radius,red] plot ({\radius,{2}});
	\draw[thick,domain=4:6,smooth,dotted,variable=\radius,red] 
	(1,.5) to[out=45,in=180] (2,2);
	\draw [thick](1,.1) -- (1,-.1) node[below] {$\sigma$};
	\draw [thick](2,.1) -- (2,-.1) node[below] {$2\sigma$};
	\draw [thick](.1,2) -- (-.1,2) node[left] {$4\sigma^2$};
      \end{tikzpicture}
      \caption{The smooth cut-off of the profile $\rho\mapsto\rho^2$.}
       \label{fig:cutoff}
    \end{figure}
    By \cite[Lemma 3.7]{ambrosio1996level}, the gradient of $ \phi$ solves the heat equation 
    \begin{equation}\label{phi heat eqn}
       \partial_t \nabla\phi -\Delta \nabla \phi  =0 \quad \text{on }\Gamma_t.
    \end{equation}
    The eigenvalues of the Hessian $\nabla^2 \phi$ are 
well-controlled \cite[Theorem 3.2]{ambrosio1996level}. 
In particular, near $\Gamma_t$, the Hessian has two eigenvalues equal to
one and the third strictly less than one. (For a general codimension-2 surface 
in $\R^d$, the Hessian has two eigenvalues equal to one and the remaining $d-2$ eigenvalues are strictly less than one.)
From these, we deduce that for small $\sigma>0$, 
the following holds:
    \begin{equation}\label{phi hessian}
\nabla^2 \phi \leq Id \quad \text{on } \{(x,t) \colon d(x,\Gamma_t) <\sigma\},
\,\,\,\text{i.e.,}\,\,\,
\text{$\xi \cdot \nabla^2 \phi \,\xi \leq  |\xi|^2$ 
for any $\xi\in \R^3$.}
    \end{equation}
Furthermore, we have
    \begin{equation}\label{phi derivatives on gamma}
     \partial_t\phi= 0 ,\quad \text{and}\quad \Delta \phi = 2\quad \text{on } \Gamma_t.
    \end{equation}
    Now applying Taylor expansion to $\partial_t \phi-\Delta \phi$, we obtain by \eqref{phi derivatives on gamma} and \eqref{phi heat eqn} that 
    \begin{equation}\label{phi expansion of heat operator}
      \partial_t \phi - \Delta \phi \leq - 2 + \frac {C(\sigma)}2 d^2(x,\Gamma_t) = -2 + C(\sigma) \phi \quad \text{in } \{(x,t) \colon d(x,\Gamma_t) <\sigma\}.
    \end{equation}
We point out here that the use of the squared distance function is not 
coincidental as it was used to characterize mean curvature flow in the sense 
that $\Gamma_t$ evolves by mean curvature flow 
\Em{if and only if} $\phi$ solves
\eqref{phi heat eqn} \cite[Lemma 3.7]{ambrosio1996level}.

We are very much inspired by the following localized version of the 
energy-dissipation relation \eqref{GL ED} for the Ginzburg-Landau dynamics
\eqref{GL} derived in \cite[p. 421-422]{lin1998complex}.
A direct computation followed by an application of \eqref{phi derivatives on gamma} and \eqref{phi expansion of heat operator} shows
\begin{equation}\label{GL ED loc}
\begin{split}
\frac{d}{dt} \int &\phi_\sigma  \left(  \frac12 \left| \nabla u_\eps\right|^2 +\frac1\eps W(u_\eps)\right) dx\\
&= - \int \phi_\sigma \left|\partial_t u_\eps \right|^2 +\sum_{i,j}\partial_i \partial_j \phi \,\partial_i u \,\partial_j u + \left( \partial_t -\Delta \phi\right) \left(  \frac12 \left| \nabla u_\eps\right|^2 +\frac1\eps W(u_\eps)\right) dx\\
&\leq  -\int  \phi_\sigma \left|\partial_t u_\eps \right|^2dx+C(\sigma) \int \phi_\sigma \left(  \frac12 \left| \nabla u_\eps\right|^2 +\frac1\eps W(u_\eps)\right) dx.
\end{split}
\end{equation}
Then a Gronwall argument gives that the (localized) Ginzburg-Landau
energy stays bounded away from the filament.
Note that in the above, equations \eqref{phi hessian} and 
\eqref{phi expansion of heat operator} are used in a subtle but crucial
way,
\[
\begin{split}
& \sum_{i,j}\partial_i \partial_j \phi \,\partial_i u \,\partial_j u 
+ \left( \partial_t -\Delta \phi\right) 
\left(  \frac12 \left| \nabla u_\eps\right|^2 \right)\\
\leq &
\Abs{\nabla u_\eps}^2 + (-2 + C(\sigma)\phi)\left(  \frac12 \left| \nabla u_\eps\right|^2 \right)
\leq C(\sigma) \frac{\phi}{2}\Abs{\nabla u_\eps}^2.
\end{split}
\]
Curiously, our monotonicity formula (Lemma \ref{lem energy 2}) is in a 
sense an analogue of this property and will also be used in a crucial step.

     To mimic the previous computation, the localized thresholding 
energies \eqref{def Eh local2} play a pivotal role in our analysis.
The following proposition is the key ingredient in our proof of Theorem \ref{thm filament}.
Essentially, it provides a localized energy-dissipation inequality very much 
like the time integrated version of \eqref{GL ED loc}.

    \begin{prop}[Energy inequality]\label{prop gronwall}
Let $\Gamma_t$ (for $0\leq t \leq T$),
$\phi$, and $u^h$ be given in 
\eqref{MCF0}-\eqref{MCF}, 
\eqref{def phi}, 
and  \eqref{def pw-const}.
Then as $h\to0$, we have
      \begin{equation}\label{gronwall}
	  E_h(u^h(T),\phi_\sigma (T)) + \int_0^T \int \phi_\sigma \, \big| G_{h/2}\ast \partial_t^h u^h \big|^2 dx\,dt \leq C(\sigma) E_h(u^0,\phi_\sigma(0)) +o(1).
	\end{equation}
    \end{prop}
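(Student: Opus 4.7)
My strategy is to prove Proposition \ref{prop gronwall} by establishing a discrete-in-time version of the Ginzburg-Landau localized dissipation identity \eqref{GL ED loc}, leveraging the localized minimizing movements principle from Lemma \ref{lemma MM} and then closing via discrete Gronwall. The proof naturally splits into a one-step inequality, an analysis of the time-evolution of the cut-off $\phi_\sigma$, and the absorption of a critical leading-order term via the sharp monotonicity formula.

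First, at each time step $n$ I would apply \eqref{MM local} with $\psi = \phi^n := \phi_\sigma(\cdot, nh)$ and the competitor $u = u^{n-1}$. Since the metric terms in \eqref{MM local} vanish at $u^{n-1}$, the result is
\[
 E_h(u^n,\phi^n) + \tfrac{1}{h}\int \phi^n (u^n - u^{n-1})\cdot G_h*(u^n-u^{n-1})\,dx + \tfrac{1}{h}\int (u^n-u^{n-1}) \cdot [G_h*,\phi^n] u^{n-1}\, dx \leq E_h(u^{n-1}, \phi^n).
\]
Writing $G_h = G_{h/2}*G_{h/2}$ and commuting $G_{h/2}*$ through $\phi^n$, the quadratic metric term reproduces $\int \phi^n |G_{h/2}*(u^n - u^{n-1})|^2 dx$ up to commutator errors of order $h\|\nabla^2 \phi\|_\infty$ times already-controlled quantities.

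Second, I would transfer $\phi^n$ to $\phi^{n-1}$ on the right by telescoping,
\[
 E_h(u^{n-1},\phi^n) - E_h(u^{n-1},\phi^{n-1}) = \tfrac{1}{h}\int_{(n-1)h}^{nh}\int (\partial_t \phi_\sigma)(t)\,\bigl(1-u^{n-1}\cdot G_h*u^{n-1}\bigr)\,dx\,dt,
\]
and then split $\partial_t \phi_\sigma = \Delta \phi_\sigma + (\partial_t - \Delta)\phi_\sigma$. The second piece, bounded by $-2 + C(\sigma)\phi_\sigma$ via \eqref{phi expansion of heat operator}, contributes the benign Gronwall-friendly term $C(\sigma) E_h(u^{n-1},\phi_\sigma)$ together with a crucial negative leading-order term $-\tfrac{2}{h}\int (1-u^{n-1}\cdot G_h*u^{n-1})\,dx$. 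The $\Delta\phi_\sigma$-contribution, after moving the Laplacian through $G_h$ and using the Hessian bound \eqref{phi hessian}, produces a positive term with the same prefactor $2$, mirroring the cancellation $-\Delta\phi\cdot F + \sum_{ij}\partial_i\partial_j\phi\,\partial_i u\,\partial_j u \leq 0$ in \eqref{GL ED loc}.

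The main obstacle, and the reason the proof is delicate, is the matching of these two leading-order contributions with prefactor $2$: any mismatch would leave behind an unabsorbable residual of order $|\log h|$, since $E_h(u^n)$ itself blows up logarithmically along the filament by \eqref{Eu0}. A generic first-order Taylor expansion of the commutators and heat-semigroup identities would leave exactly such an error. This is precisely the role of the sharp monotonicity formula (Lemma \ref{lem energy 2}): it forces the Hessian-generated term to match the prefactor $2$ from \eqref{phi derivatives on gamma}--\eqref{phi expansion of heat operator} \emph{exactly}, not merely up to lower-order corrections, so that the two diverging contributions cancel pointwise. Once this critical cancellation is secured, summing the one-step inequalities for $n=1,\ldots,N=T/h$ produces, on the left, the dissipation integral $\int_0^T\int \phi_\sigma |G_{h/2}*\partial_t^h u^h|^2\,dx\,dt$ and the energy $E_h(u^h(T),\phi_\sigma(T))$, while the remaining errors are controlled by $C(\sigma)\int_0^T E_h(u^h(t),\phi_\sigma(t))\,dt$ plus $o(1)$ terms (using the global bound \eqref{energy estimate} and Cauchy--Schwarz on the commutator residuals). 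A discrete Gronwall argument then yields \eqref{gronwall}.
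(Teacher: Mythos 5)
Your high-level architecture is right and matches the paper's strategy: apply the localized minimizing-movements inequality \eqref{MM local} with the competitor $u=u^{n-1}$, telescope in $\phi$, invoke the heat-operator bound \eqref{phi expansion of heat operator} and the Hessian bound \eqref{phi hessian}, cancel the two $|\log h|$-divergent contributions via the sharp monotonicity \eqref{D and E sharp}, and close with discrete Gronwall. The identification of the sharp constant $1$ in $\int |\nabla G_{h/2}*u|^2\,dx\leq E_h(u)$ as the linchpin that prevents a residual of order $|\log h|$ is exactly the crucial insight.

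However, there is a genuine gap in the middle of your argument: you misattribute the Hessian term $2\sum_{i,j}\iint\partial_i\partial_j\phi\,\partial_iG_{h/2}*u^h\cdot\partial_jG_{h/2}*u^h$ to "the $\Delta\phi_\sigma$-contribution" of the telescoping sum $I_3$. That term in $I_3$ is just $\frac1h\iint\Delta\phi\,(1-u^h\cdot G_h*u^h)$, and no amount of "moving the Laplacian through $G_h$" turns it into a Hessian contraction of $\nabla G_{h/2}*u^h$. In fact, the Hessian term and the compensating $-\frac1h\iint\Delta\phi\,(1-u^h\cdot G_h*u^h)$ both come from the commutator term $I_2=-h\sum_n\int\frac{u^n-u^{n-1}}{h}\cdot\frac1h[G_h*,\phi^n]u^{n-1}$, which you write down in your one-step inequality and then effectively drop. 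Extracting this structure from $I_2$ is, in the paper, the "leading-order and most difficult term"; it requires (a) the antisymmetry trick $\int V\,[G_h*,\phi]V\,dx=0$ to rewrite $I_2$ with the average $\frac{u^n+u^{n-1}}{2}$ and the difference quotient, (b) a further $G_h = G_{h/2}*G_{h/2}$ symmetrization, (c) the higher-order expansion of the commutator \eqref{comm higher order} beyond the first-order identity \eqref{comm V}, and (d) matching the result with the inner-variation Euler–Lagrange equation \eqref{EL inner} evaluated at $\xi=\nabla\phi$. Your error estimate "of order $h\|\nabla^2\phi\|_\infty$" pertains to the metric term $I_1$, not to $I_2$, and does not cover this work. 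Until that analysis is supplied, the inequality
\[
E_h(u^h(T),\phi(T))\leq E_h(u^0,\phi(0)) - \iint\phi\,|G_{h/2}*\partial_t^hu^h|^2 + 2\sum_{i,j}\iint\partial_i\partial_j\phi\,\partial_iG_{h/2}*u^h\cdot\partial_jG_{h/2}*u^h + \iint\tfrac1h(\partial_t\phi-\Delta\phi)(1-u^h\cdot G_h*u^h) + o(1)
\]
from which your "endgame" with the sharp monotonicity starts is unjustified.

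A smaller imprecision: the two diverging contributions do not "cancel pointwise." The inequality $\int|\nabla G_{h/2}*u|^2\leq E_h(u)$ holds only globally (it is a Fourier-side statement), so after applying \eqref{phi hessian} one bounds $2\iint_{A_\sigma}|\nabla G_{h/2}*u^h|^2$ by the global $2\int_0^T E_h(u^h)\,dt$ and uses the near-field term $-\frac2h\iint_{A_\sigma}(1-u^h\cdot G_h*u^h)$ to leave precisely the far-field remainder $\frac2h\iint_{A_\sigma^c}(\cdots)\leq\frac{C(\sigma)}{h}\iint\phi\,(\cdots)$, which is Gronwall-absorbable.
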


Its proof relies on several results which we present next.

\medskip

     The following basic facts about the energy $E_h$
are stated for more general localization function.
     \begin{lem}\label{lem energy 1}
     Let $u\colon \domain \to \mathbb{S}^1$ be a unit vector field
and $\psi\colon\domain \to \R$ a bounded function. 
     \begin{enumerate}[(i)]
      \item 
The approximate energy $E_h$ can be written as the following weighted integral of finite differences:
      \begin{align}\label{E finite diff}
	E_h(u,\psi) = \frac12  \int \psi(x) \int  G(z)\, 
\left| \frac{u(x)-u(x-\h z)}{\h}\right|^2 dz\, dx.
      \end{align}
      Furthermore, the energies $E_h$ approximate the Dirichlet energy in the sense that
      \Beqn\label{approx Dir}
       \lim_{h\downarrow0}E_h(u,\psi) =
\int \psi \left|\nabla u\right|^2 dx \quad \text{
      for unit vector fields } u\in W^{1,2}.
      \Eeqn
      \item If $\psi \geq 0$,
then the energy satisfies the following approximate monotonicity formula,
      \begin{equation}\label{monotonicity}
       E_{N^2 h}(u,\psi) \leq E_h(u,\psi) + C\Norm{\nabla\psi}_\infty
       \sqrt{N^2 h} \,E_h(u),
 \quad \text{for $N\in \N$.}
      \end{equation}
      \item If $\psi \geq 0$, then
the Dirichlet energy of $u$ is controlled by the energy:
      \begin{equation}\label{D and E easy}
        \int \psi \left| \nabla G_{h/2}\ast u\right|^2 dx \lesssim E_h(u,\psi).
      \end{equation}
    \end{enumerate}
    \end{lem}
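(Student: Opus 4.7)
\textbf{Proof proposal for Lemma \ref{lem energy 1}.} The plan is to treat the three parts in order, exploiting throughout the core identity $1-u(x)\cdot(G_h\ast u)(x)=\tfrac12\int G_h(x-y)|u(x)-u(y)|^2\,dy$, which holds because $|u|\equiv 1$ and $G_h$ integrates to $1$. For part (i), I substitute this into the definition of $E_h(u,\psi)$ and rescale $y=x-\sqrt h\,z$, so that $G_h(x-y)\,dy=G(z)\,dz$; this directly yields the finite-difference representation \eqref{E finite diff}. For the limit \eqref{approx Dir}, when $u\in W^{1,2}$ the difference quotient $\frac{u(x)-u(x-\sqrt h z)}{\sqrt h}$ converges to $z\cdot\nabla u(x)$ pointwise a.e.\ and in $L^2_{\text{loc}}$. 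Since $|u|\leq 1$ and $\int G(z)(1+|z|^2)\,dz<\infty$, dominated convergence combined with the second-moment identity $\int z_iz_jG(z)\,dz=2\delta_{ij}$ delivers $\lim_h E_h(u,\psi)=\int\psi|\nabla u|^2\,dx$.

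For part (ii), the mechanism is telescoping: writing $u(x)-u(x-N\sqrt h z)=\sum_{k=0}^{N-1}\big(u(x-k\sqrt h z)-u(x-(k+1)\sqrt h z)\big)$ and applying Cauchy--Schwarz yields a bound by $N$ times the sum of the squared pieces. Plugging this into the representation from (i) for $E_{N^2 h}(u,\psi)$, exchanging sum and integral, and for each $k$ substituting $x'=x-k\sqrt h z$, the integrand carries $\psi(x'+k\sqrt h z)$. Since $\psi\geq 0$, the Lipschitz bound $\psi(x'+k\sqrt h z)\leq \psi(x')+\|\nabla\psi\|_\infty k\sqrt h |z|$ splits the sum into a main term equal to $\sum_{k=0}^{N-1}\tfrac1N E_h(u,\psi)=E_h(u,\psi)$ and a remainder proportional to $\sum_k k\sqrt h=O(N^2\sqrt h)$ times $\int\int |z|\,G(z)|u(x)-u(x-\sqrt h z)|^2\,dz\,dx$. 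To absorb the extra $|z|$ I use the elementary Gaussian bound $|z|G(z)\leq C\, G_2(z)$, so after change of variables this integral equals $Ch\,E_{2h}(u)$; monotonicity of the unweighted energy $t\mapsto E_t(u)$ --- immediate from the Fourier representation, since $t\mapsto(1-e^{-t|k|^2})/t$ is decreasing --- bounds this by $Ch\,E_h(u)$, and we obtain the claimed error $C\|\nabla\psi\|_\infty\sqrt{N^2 h}\,E_h(u)$.

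For part (iii), I apply a pointwise Cauchy--Schwarz argument to the gradient of the smoothed field. Since $\int\nabla G_{h/2}(x-y)\,dy=0$, we write $\nabla(G_{h/2}\ast u)(x)=\int\nabla G_{h/2}(x-y)(u(y)-u(x))\,dy$, and Cauchy--Schwarz against the weight $G_{h/2}$ gives the pointwise bound $|\nabla(G_{h/2}\ast u)(x)|^2\leq \int \tfrac{|\nabla G_{h/2}(x-y)|^2}{G_{h/2}(x-y)}|u(x)-u(y)|^2\,dy$. Since $\nabla G_{h/2}(z)=-\tfrac{z}{h}G_{h/2}(z)$, the ratio equals $\tfrac{|x-y|^2}{h^2}G_{h/2}(x-y)$, and the pointwise Gaussian comparison $|z|^2 G_{h/2}(z)\leq \tilde C\,h\, G_h(z)$ converts the bound into $\tfrac{\tilde C}{h}\int G_h(x-y)|u(x)-u(y)|^2\,dy$. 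Multiplying by $\psi(x)\geq 0$, integrating and recognising the representation from (i) yields \eqref{D and E easy}.

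The main obstacle is the control of the error term in part (ii): the Lipschitz estimate on $\psi$ produces an extra factor of $|z|$ which must be absorbed without losing powers of $h$. This is exactly where the Gaussian absorption $|z|G\lesssim G_2$ --- a comparison of heat kernels after an implicit time rescaling --- combined with the Fourier-based monotonicity of the unweighted energy is essential. Parts (i) and (iii) are then essentially pointwise calculations flowing from the finite-difference identity.
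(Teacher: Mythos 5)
Your finite--difference identity \eqref{E finite diff}, your telescoping plus Jensen argument for (ii), and your weighted Cauchy--Schwarz for (iii) all match the paper's approach up to inessential cosmetic choices: in (ii) the paper absorbs the extra $|z|$ via $|z|G(z)\lesssim G_4(z)$ together with $E_{4h}\le E_h$ coming from the Jensen estimate at $\psi\equiv 1$, whereas you use $G_2$ and appeal to the Fourier-based monotonicity (which is really Lemma \ref{lem energy 2}, proved independently, so no circularity); in (iii) you put the weight $G_{h/2}$ inside Cauchy--Schwarz while the paper uses $|\nabla G_{h/2}|$, but both give the same bound after the Gaussian comparisons $\nabla G_{h/2}(z) = -\tfrac{z}{h}G_{h/2}(z)$ and $|z|^2G_{h/2}\lesssim h\,G_h$.

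The gap is in the limit \eqref{approx Dir}. You invoke dominated convergence, but no dominating function independent of $h$ is available: from $|u|\le 1$ the only immediate pointwise bound on the integrand is $\psi(x)G(z)\cdot\tfrac{4}{h}$, which blows up, and from $u\in W^{1,2}$ the natural bound
$\big|\tfrac{u(x)-u(x-\sqrt h z)}{\sqrt h}\big|^2 \le |z|^2\int_0^1|\nabla u(x-s\sqrt h z)|^2\,ds$
is itself $h$-dependent (it gives $E_h(u)\le\int|\nabla u|^2$, but not a fixed majorant). Moreover the difference quotients converge in $L^2(G\,dz\,dx)$ only \emph{weakly} a priori, and weak $L^2$-convergence does not pass through the quadratic functional. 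The paper handles this carefully: Fatou (or weak lower semicontinuity) gives $\liminf_h E_h(u,\psi)\ge\int\psi|\nabla u|^2$ for $\psi\ge 0$; then, for $\psi\equiv 1$, a fundamental-theorem-of-calculus rewriting of $E_h(u)$ as a bilinear pairing of the difference quotient against $\int_0^1\int_0^1\nabla u(\cdot+(t-s)\sqrt h z)\,ds\,dt$ plus the \emph{weak} $L^2$-convergence of the latter yields $\lim_h E_h(u)=\int|\nabla u|^2$; finally linearity in $\psi$ together with the Fatou lower bound (applied to both $\|\psi\|_\infty\pm\psi\ge 0$) upgrades to \eqref{approx Dir}. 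That intermediate argument --- equivalently, showing that the difference quotients converge \emph{strongly} in $L^2(G\,dz\,dx)$ --- is the step your proposal is missing.
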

    
We note the additional property that in fact the energies $E_h$ 
$\Gamma$-converge 
to the Dirichlet energy. This is because by (ii), the the pointwise 
convergence in (i) is almost monotone.
    
    The following lemma sharpens the monotonicity statement of 
   \eqref{monotonicity}
    in the case of $\phi \equiv 1$. It has an interesting implication, 
namely a ``sharp'' version of the comparison between 
the approximate energy $E_h$ of $u$ and
    the Dirichlet energy of its convolution $G_{h/2}\ast u$ in Lemma \ref{lem energy 1} (iii). As mentioned earlier, this sharp inequality will play a crucial 
role in our analysis.  
    
    \begin{lem}[Monotonicity]\label{lem energy 2}
      The approximate energies $E_h$ are monotone in $h$, i.e., for any fixed measurable $u\colon \domain \to \mathbb{S}^1$, we have
      \begin{equation}\label{dhE}
       \frac{d}{dh} E_h(u) \leq 0.
      \end{equation}
      Furthermore, we have the sharp inequality
      \begin{equation}\label{D and E sharp}
	\int |\nabla G_{h/2} \ast u|^2 \,dx \leq E_h(u).
      \end{equation}

    \end{lem}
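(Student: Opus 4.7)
The plan is to reduce both statements to the fact that the Dirichlet energy along the heat flow is non-increasing in time. First I would compute $\frac{d}{dh}\bigl(h\, E_h(u)\bigr)$ explicitly. Starting from $h\, E_h(u) = \int (1-u\cdot G_h\ast u)\,dx$ and using $\partial_h G_h = \Delta G_h$, the task reduces to evaluating $-\int u\cdot\Delta(G_h\ast u)\,dx$. Writing $G_h = G_{h/2}\ast G_{h/2}$ and using the symmetry of $G_{h/2}$ to move one convolution factor onto the outer $u$ via Fubini, then integrating by parts once on the smooth periodic function $w := G_{h/2}\ast u$, I obtain the key identity
\[
\frac{d}{dh}\bigl(h\, E_h(u)\bigr) = \int |\nabla G_{h/2}\ast u|^2\,dx =: D(h).
\]
Noting that $h\,E_h(u)\to 0$ as $h\downarrow 0$ (immediate from dominated convergence, since $u\cdot G_h\ast u \to |u|^2 = 1$ pointwise almost everywhere), integrating in $h$ yields $h\, E_h(u) = \int_0^h D(t)\,dt$; in particular $E_h(u)$ is exactly the time-average of $D$ on $[0,h]$.

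The second ingredient is that $D$ is monotone non-increasing in $h$. Since $w = G_{h/2}\ast u$ solves the rescaled heat equation $\partial_h w = \tfrac12\Delta w$, a direct computation using integration by parts on $\mathbb{T}^3$ gives
\[
\frac{d}{dh} D(h) = -\int |\Delta w|^2\,dx \leq 0.
\]

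Both claims of the lemma now follow immediately. The sharp comparison \eqref{D and E sharp} holds because the time-average of a non-increasing function dominates its final value: $D(h) \leq \tfrac{1}{h}\int_0^h D(t)\,dt = E_h(u)$. The monotonicity \eqref{dhE} then follows from combining the derivative formula with the sharp comparison:
\[
h\,\frac{d}{dh}E_h(u) = \frac{d}{dh}\bigl(h\, E_h(u)\bigr) - E_h(u) = D(h) - E_h(u) \leq 0.
\]
The only step requiring attention is justifying the differentiations under the integral and the integration by parts, but these are routine consequences of the smoothing and rapid decay of the heat kernel on $\mathbb{T}^3$ for any fixed $h>0$, so there is no serious obstacle.
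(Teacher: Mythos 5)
Your proof is correct, but it follows a genuinely different route from the paper's main argument. The paper proves the monotonicity \eqref{dhE} by passing to Fourier space, writing $E_h(u) = \frac1h\sum_k (1-\widehat G_h(k))|\hat u(k)|^2$ and checking that $\frac{\partial}{\partial h}\bigl[\frac1h(1-\widehat G_h(k))\bigr]\leq 0$ termwise via the elementary inequality $e^s\geq 1+s$; it then re-derives the physical-space expression for $\frac{d}{dh}E_h$ and reads off \eqref{D and E sharp} as a restatement of \eqref{dhE}. You instead work entirely in physical space: you establish the exact averaging identity $E_h(u)=\frac1h\int_0^h D(t)\,dt$ with $D(t):=\int|\nabla G_{t/2}\ast u|^2\,dx$, observe that $D$ is non-increasing by the standard dissipation of the Dirichlet energy under the heat flow, and deduce \eqref{D and E sharp} as "average $\geq$ endpoint value" and then \eqref{dhE} from \eqref{D and E sharp} via $h\frac{d}{dh}E_h = D(h)-E_h$. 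This is precisely the alternative the paper itself flags in the remark following its proof, where the identity $E_h(u)=\frac2h\int_0^{h/2}\int|\nabla G_t\ast u|^2\,dx\,dt$ is stated; your writeup simply fills in the details and executes the program. One advantage of your route is that it makes the logical dependency transparent (the sharp inequality implies the monotonicity, not the other way around) and avoids any appeal to Fourier coefficients; the Fourier proof has the mild advantage of making the strict inequality and its rigidity (equality iff $\hat u$ is supported at $k=0$) completely explicit.
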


    The following lemma gives a bound $O(\Lover{h}|\log h|)$ for the squared $L^2$-norm of the 
    discrete time derivative of the approximate solutions.
    Note that the bound diverges as $h\converge 0$.
    We believe this bound is far from optimal but 
it is sufficient for our purposes. 
    \begin{lem}\label{lem estimate dtu}
     Let $u^h$ be defined in \eqref{def pw-const}.
Then it satisfies the following a priori estimate.
     \begin{equation}\label{dtu}
	\int_0^T\int \big|\partial_t^h u^h\big|^2\,dx\,dt \lesssim \left(1+ \frac Th\right) E_h(u^0).
     \end{equation}
    \end{lem}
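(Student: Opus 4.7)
The plan is to establish a per-step bound $\int |u^n - u^{n-1}|^2\,dx \lesssim h\,E_h(u^{n-1})$ and then sum, using the monotonicity $E_h(u^n)\leq E_h(u^{n-1})$ that is immediate from the minimizing movements principle of Lemma \ref{lemma MM} (take $u=u^{n-1}$ as a competitor in \eqref{MM}). This approach is deliberately crude: we discard the smoothing $G_{h/2}\ast(\cdot)$ that makes the dissipation estimate \eqref{energy estimate} sharp and pass to the unmollified finite difference, losing a factor $1/h$. Since the target bound in \eqref{dtu} already diverges like $|\log h|/h$, this loss is harmless.

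For the per-step estimate, write $v^n := G_h\ast u^{n-1}$, so that $u^n = v^n/|v^n|$. Since $|u^{n-1}|=1$ a.e.\ and $|v^n|\leq G_h\ast|u^{n-1}|=1$ by Jensen's inequality, the reverse triangle inequality yields $1 - |v^n| = |u^{n-1}| - |v^n| \leq |v^n - u^{n-1}|$, while directly $|u^n - v^n| = 1 - |v^n|$. Therefore pointwise
\[
|u^n - u^{n-1}| \leq |u^n - v^n| + |v^n - u^{n-1}| \leq 2|v^n - u^{n-1}|.
\]
Squaring, integrating, and applying Cauchy--Schwarz to the convolution $v^n - u^{n-1} = \int G_h(\cdot - y)(u^{n-1}(y) - u^{n-1}(\cdot))\,dy$ together with the identity $|u^{n-1}(y) - u^{n-1}(x)|^2 = 2(1 - u^{n-1}(x)\cdot u^{n-1}(y))$ (from $|u^{n-1}|\equiv 1$), one obtains
\[
\int |u^n - u^{n-1}|^2\,dx \leq 4\iint G_h(x-y)|u^{n-1}(y) - u^{n-1}(x)|^2\,dy\,dx = 8 h\,E_h(u^{n-1}).
\]

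To conclude, observe that $\int_0^T\int |\partial_t^h u^h|^2\,dx\,dt = h^{-1}\sum_{n=1}^{N} \int |u^n - u^{n-1}|^2 \,dx$ with $N=\lceil T/h\rceil$. Plugging in the per-step estimate and using $E_h(u^{n-1}) \leq E_h(u^0)$ gives
\[
\int_0^T \int |\partial_t^h u^h|^2\,dx\,dt \leq 8 N\,E_h(u^0) \lesssim \Bigl(1 + \tfrac{T}{h}\Bigr) E_h(u^0),
\]
which is the claim. There is no essential obstacle; every step is elementary, which is consistent with the remark in the statement that the bound is far from optimal but sufficient for later use.
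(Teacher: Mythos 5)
Your proof is correct, and it takes a genuinely different route from the one in the paper.

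The paper works symmetrically in $u^n$ and $u^{n-1}$: it starts from the generic decomposition $|u^n - u^{n-1}|^2 \lesssim |G_h\ast(u^n-u^{n-1})|^2 + |G_h\ast u^n - u^n|^2 + |G_h\ast u^{n-1}-u^{n-1}|^2$, bounds the first term after integration via the energy-dissipation estimate \eqref{energy estimate}, and rewrites the last two using the identity $\int |G_h\ast u - u|^2\,dx = 2h\big(E_h(u) - E_{2h}(u)\big)$ (valid for $|u|\equiv 1$), then sums. You instead exploit the specific algebraic structure of the thresholding step $u^n = v^n/|v^n|$ with $v^n = G_h\ast u^{n-1}$: the observation $|u^n - v^n| = 1-|v^n| \leq |u^{n-1}|-|v^n| \leq |v^n - u^{n-1}|$ shows the projection onto $\mathbb{S}^1$ moves $v^n$ \emph{toward} the unit vector $u^{n-1}$, giving the pointwise inequality $|u^n - u^{n-1}| \leq 2|v^n - u^{n-1}|$; combined with Jensen's inequality, this yields the per-step bound $\int|u^n-u^{n-1}|^2\,dx \leq 8h\,E_h(u^{n-1})$. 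Your route buys something concrete: you never need the quantitative dissipation bound \eqref{energy estimate} for $G_{h/2}\ast\partial_t^h u^h$, only the much weaker monotonicity $E_h(u^n) \leq E_h(u^{n-1})$, which as you note is a trivial consequence of comparing $u^n$ to the competitor $u^{n-1}$ in \eqref{MM}. The per-step estimate is also cleaner and arguably more informative, whereas the paper's argument is decoupled from the thresholding structure and would apply to any sequence of unit fields with comparable diffuse closeness. Either version is elementary and adequate, since the target bound already diverges like $|\log h|/h$.
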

On the other hand, using the energy dissipation \eqref{energy estimate}, 
we automatically have the following ``better" estimate if the backward 
in time finite difference is smoothed out by convolving on the length scale $\h$:
     \begin{equation}\label{G_hdtu}
	\int_0^T\int \big|G_{h/2}*\partial_t^h u^h\big|^2\,dx\,dt 
\leq E_h(u^0).
     \end{equation}
Both \eqref{dtu} and \eqref{G_hdtu} will be used in our proof.

\medskip

    Recall that we will recover \emph{two} equations to describe the limit
of $u^h$ as $h\Converge 0$. 
The first is the motion law of the vorticity set 
which in the limit is a curve moving by its curvature. 
The second is the equation for the phase function 
which lives away from the vorticity set and in the limit 
solves a diffusion equation.
    To this end, we consider \Em{inner} and \Em{outer variations} of $u^n$
leading to \emph{two} of Euler-Lagrange equations for the minimization problem 
\eqref{MM}.
\begin{itemize}
\item
    The inner variation $u_s$ of $u\colon \domain \to S^1$ is
given by the variation of domain along a smooth vector field $\xi$:
    \begin{equation}\label{inner variations}
u_s(x) = u\big(x - s\xi(x)\big),
   \quad
    \text{so that}
   \quad
     \partial_s u_s\big|_{s=0} = -\xi \cdot \nabla u.
    \end{equation}
\item
    The outer variation $\tilde u_s$ of $u$ in direction of a smooth vector field $\varphi$ is given by
    \begin{equation}\label{outer variations}
      \tilde u_s := \frac{u+s\varphi}{|u+s\varphi|},
   \quad
    \text{so that}
   \quad
     \partial_s \tilde u_s\big|_{s=0} = \left( Id -u \otimes u \right) \varphi.
    \end{equation}
\end{itemize}
    
    Using the above, we have the following statements.
    \begin{lem}[Euler-Lagrange equations]\label{lem EL eq}
Let $u^h$ be the piecewise constant in time interpolation \eqref{def pw-const}.
      Then it satisfies the following two statements.
      \begin{enumerate}[(i)]
       \item  For any smooth vector field $\xi\colon \domain \times [0,T]\to \R^3$ we have
      \begin{align}
       & 2 \iint G_{h/2} \ast \partial_t^{h}u^h \cdot \left(\xi \cdot \nabla \right) G_{h/2} \ast u^h \,dx\,dt \label{EL inner} \\
&=\frac1h \iint \!\left(\nabla \cdot \xi\right) ( 1-u^h \cdot G_h\ast u^h) \,dx\,dt
       -2 \sum_{i,j}\iint \!\partial_i \xi_j \,\partial_i G_{h/2}\ast u^h \cdot \partial_j G_{h/2} \ast u^h \,dx\,dt +o(1).
\notag 
 \end{align}
      \item    For any smooth function $\zeta\colon \domain \times [0,T] \to \R$, we have
      \begin{equation}\label{EL outer weak}
        \iint (u^h_j\partial_t^{-h} G_h \ast u^h_i  - u^h_i \partial_t^{-h} G_h \ast u^h_j) \zeta +( u^h_j\nabla G_h \ast u^h_i - u^h_i\nabla G_h \ast u^h_j)\cdot \nabla \zeta \,dx\,dt=o(1).
      \end{equation}
      \end{enumerate}
    \end{lem}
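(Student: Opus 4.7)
Both Euler--Lagrange equations will come from differentiating the minimization functional
\[
F_n(u) := E_h(u) + \frac1h \int (u - u^{n-1})\cdot G_h\ast(u - u^{n-1})\,dx
\]
from Lemma \ref{lemma MM} at the minimizer $u = u^n$. For (i), I will use the \emph{inner variations} $u_s(x) = u^n(x - s\xi(x))$, which automatically preserve $|u|=1$ almost everywhere; for (ii), the spherical \emph{outer variations} $\tilde u_s = (u^n + s\varphi)/|u^n + s\varphi|$ also stay admissible. Both families yield $\tfrac{d}{ds}\big|_{s=0} F_n(\cdot_s)=0$. The key methodological choice---dictated by the fact that $u^n$ is singular on the filament---is to compute every variation by a change of variables $y\mapsto y - s\xi(y)$ in the integrals (with Jacobian $1+s\nabla\cdot\xi+O(s^2)$) rather than via integration by parts against $\nabla u^n$.

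For (i), the derivative of $F_n$ splits into two pieces. Applying the change of variables to the finite-difference form $E_h(u_s) = \frac{1}{2h}\iint G_h(x-y)|u_s(x)-u_s(y)|^2\,dx\,dy$ from \eqref{E finite diff} produces a Jacobian term proportional to $\nabla\cdot\xi$---which matches the first term on the RHS of \eqref{EL inner} after using $\int\nabla\cdot\xi\,dx=0$ on $\mathbb{T}^3$---together with a kernel term $\iint\nabla G_h(x-y)\cdot(\xi(x)-\xi(y))|u^n(x)-u^n(y)|^2\,dx\,dy$. Taylor-expanding $\xi(x)-\xi(y)\approx\nabla\xi(y)(x-y)$ and invoking the Hermite-type identity
\[
(x-y)_i(x-y)_j\, G_h(x-y) = 4h^2\,\partial_i\partial_j G_h(x-y) + 2h\,\delta_{ij}\,G_h(x-y)
\]
converts the kernel term into the gradient-tensor expression $-2\sum_{i,j}\int\partial_i\xi_j\,\partial_i G_{h/2}\ast u^n\cdot\partial_j G_{h/2}\ast u^n\,dx$, yielding the rest of the RHS. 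Applying the same change of variables to the metric piece of $F_n$ yields $-2\int G_{h/2}\ast\partial_t^h u^h\cdot(\xi\cdot\nabla)G_{h/2}\ast u^h\,dx$ to leading order, i.e., minus the LHS of \eqref{EL inner}; combining the two halves gives the identity.

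For (ii), the analogous computation shows $\int\varphi\cdot(I-u^n\otimes u^n)G_h\ast u^{n-1}\,dx=0$ for every smooth $\varphi$, so $G_h\ast u^{n-1}$ is pointwise parallel to $u^n$. Writing the two-dimensional wedge $u\wedge v := u_1 v_2 - u_2 v_1$, this reads $u^n\wedge G_h\ast u^{n-1}=0$, hence $u^n\wedge G_h\ast u^n = u^n\wedge G_h\ast(u^n-u^{n-1}) = h\,u^n\wedge\partial_t^h G_h\ast u^h$, which produces the first integrand of \eqref{EL outer weak} after integration in time. To cancel it against the second integrand, I will use the antisymmetrization identity
\[
\iint G_h(x-y)\bigl(u^n(x)\wedge u^n(y)\bigr)\zeta(x)\,dx\,dy = \tfrac{1}{2}\iint G_h(x-y)\bigl(u^n(x)\wedge u^n(y)\bigr)\bigl(\zeta(x)-\zeta(y)\bigr)\,dx\,dy,
\]
which follows from swapping $x\leftrightarrow y$ combined with $u(x)\wedge u(y)=-u(y)\wedge u(x)$. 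Taylor-expanding $\zeta(x)-\zeta(y)\approx\nabla\zeta(x)\cdot(x-y)$ and using $(x-y)G_h(x-y)=-2h\,\nabla G_h(x-y)$ then rewrites the right-hand side as $-h\int u^n\wedge\nabla G_h\ast u^n\cdot\nabla\zeta\,dx$, producing the desired cancellation.

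The hardest part will be bookkeeping the accumulated error when summing these per-step identities over the $O(T/h)$ time steps. The higher-order Taylor remainders in $\xi$ and $\zeta$, together with the commutator $[G_{h/2}\ast,\xi]$, contribute per-step error terms that I will control via Cauchy--Schwarz against $\iint G_h(x-y)|u^n(x)-u^n(y)|^2\,dx\,dy = 2h\,E_h(u^n) = O(h|\log h|)$ (from the well-preparedness \eqref{Eu0} propagated in time by the dissipation \eqref{energy estimate}), together with the sharp bound $\int|\nabla G_{h/2}\ast u^n|^2\,dx\le E_h(u^n)$ from Lemma \ref{lem energy 2}\eqref{D and E sharp}. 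Since the odd Gaussian moments $\int G_h(z)z_i z_j z_k\,dz$ vanish, the would-be leading $\nabla^2\xi$- and $\nabla^2\zeta$-contributions drop out; the next Taylor coefficient combined with the moment bounds yields a per-step error of magnitude $O(\sqrt{h|\log h|})$, so the total error after the time-step weight $h$ summed over $T/h$ steps is $O(T\sqrt{h|\log h|})=o(1)$, as required.
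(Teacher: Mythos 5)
Your proposal is essentially correct and follows the same route as the paper: for (i) it uses the inner variations to differentiate the minimizing-movements functional, separates a divergence (Jacobian-like) term from a kernel term, Taylor-expands the test field $\xi$, converts $z\otimes\nabla G_h$ into $\nabla^2 G_h$ plus an identity part, and symmetrizes through $G_h = G_{h/2}\ast G_{h/2}$; for (ii) it observes---as the paper's ``strong form'' \eqref{EL outer strong} does in a slightly less direct packaging---that the outer-variation equation reduces to the tautology that $G_h\ast u^{n-1}$ is parallel to $u^n$, and then wedges with $u^n$ and discretely integrates by parts. One technical imprecision worth flagging: the assertion that the $\nabla^2\xi$- and $\nabla^2\zeta$-contributions ``drop out because $\int G_h(z)z_iz_jz_k\,dz = 0$'' is not a self-contained argument, since those odd kernel moments are paired with a $z$-dependent factor ($|u(x)-u(x-z)|^2$ or $u(x)\wedge u(x-z)$) that is not even in $z$. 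One can salvage the spirit of the argument by the symmetrization $z\mapsto -z$, $x\mapsto x+z$, which introduces one extra derivative on the test function and reduces the term to a $\nabla^3$-controlled remainder, or simply use the paper's more robust bound $\frac{|z|^2}{h}|\nabla G_h|\lesssim \frac1{\sqrt h}G_{2h}$ together with Cauchy--Schwarz and the energy-dissipation bound $E_h(u^n)\le E_h(u^0)\lesssim|\log h|$; either way the total error after the time-weighted sum is $o(1)$, as you conclude, though not with the per-step exponent $O(\sqrt{h|\log h|})$ you quote---the paper's accounting gives $O(\sqrt h\,|\log h|)$ for the dominant contribution, which is still more than enough.
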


Now we get to the proofs for all the statements and the main result.

    \section{Proofs of the lemmas and the main result}\label{sec:proofs}

We first state some basic facts about the heat kernel 
which will be used frequently.
Recall the notation for the heat kernel on $\R^d$:
\[
G_h(z) :=
\frac1{(4\pi h)^{d/2}} \exp\Big(-\frac{|z|^2}{4h}\Big)
\quad z\in \R^d,\quad h> 0.
\]
The following semi-group and factorization properties hold for $G$:
\Beqn\label{semigroup-factor}
\begin{array}{ll}
G_{s+t} = G_s \ast G_{t} & \text{for $s,t> 0$},\\
G_h(z) = G^1_h(z_1)\, G^{d-1}_h(z') & \text{for } z= (z_1,z'),
\quad z_1\in\R,\quad z'\in\R^{d-1}
\end{array}
\Eeqn
where $G^1$ and $G^{d-1}$ are the one- and $(d-1)$-dimensional Gaussian 
kernels respectively.
We also have the following statements about $G_h$:
\Beqn
0 \leq G_h(z) \lesssim \frac1{h^{d/2}}, 
\quad \int_{\R^d} G_h(z)\,dz=1, 
\quad \int_{\R^d} {\frac{|z|^2}{h}}G_h(z)\,dz=2, 
\Eeqn
\Beqn
\nabla G_h(z) = - \frac z{2h} G_h(z),
\quad |\nabla G_h(z)| \lesssim \frac1{\h} G_{2h}(z),
\Eeqn
\Beqn
\nabla^2 G_h(z) 
=  \left({ \frac{z}{2h} \otimes \frac{z}{2h}}- \frac{1}{2h}{Id} \right)G_h(z).
\Eeqn
Due to the symmetry of the heat kernel, $G_h(x-z) = G_h(z-x)$, the
convolution with $G_h$ is self-adjoint in the $L^2$-sense:
\Beqn\label{selfadj}
\InnProd{f}{G_h*g}_{L^2} =
\int f(x) (G_h* g)(x)\,dx = 
\int (G_h *f)(x) g(x)\,dx = 
\InnProd{G_h *f}{g}_{L^2}.
\Eeqn

Finally, we have the following expansion of the commutator \eqref{commutator} 
between $G_h*$ and multiplication by a 
test function $\psi$. For later convenience, it is stated for $G_{h/2}$.
\begin{lem}\label{lem:commutator.est}
For $\psi: \domain\MapTo\R$ and $V:\domain\MapTo\R^d$, it holds that
\begin{align}
       \Brac{\frac1h \left[ G_{h/2} \ast, \psi \right] V}(x)
       = &\nabla \psi(x) \cdot \Brac{\nabla G_{h/2} \ast V}(x)
       + O\big(\|\nabla^2 \psi\|_\infty \frac{|z|^2}{h} G_{h/2} \ast |V|\big).\label{comm V}
\end{align}
\end{lem}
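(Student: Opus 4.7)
The plan is to expand the commutator by a direct Taylor expansion in the convolution variable. Writing out the definition \eqref{commutator} and applying the substitution $z = x-y$, I would first obtain the pointwise identity
\[
\left(\left[ G_{h/2} \ast, \psi \right] V\right)(x)
= \int G_{h/2}(z) \bigl[ \psi(x-z) - \psi(x) \bigr] V(x-z)\,dz.
\]
The appearance of $\psi(x-z)-\psi(x)$ is the main structural observation: since $G_{h/2}$ concentrates on the scale $\sqrt{h}$, only small $|z|$ effectively contribute, so it is natural to Taylor expand.

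Next, I would write $\psi(x-z) - \psi(x) = -z\cdot\nabla\psi(x) + \tfrac12 z\cdot \nabla^2\psi(\xi)z$ for some intermediate point $\xi$, and split the above integral into its linear and quadratic parts accordingly. For the linear part, I would use the elementary identity $z\,G_{h/2}(z) = -h\,\nabla G_{h/2}(z)$, which comes from $\nabla G_{h/2}(z) = -\tfrac{z}{h}G_{h/2}(z)$ in dimension $d$ with variance $h$. This turns
\[
-\nabla\psi(x)\cdot \int z\,G_{h/2}(z)\,V(x-z)\,dz
= h\,\nabla\psi(x)\cdot (\nabla G_{h/2}\ast V)(x),
\]
which after dividing by $h$ is precisely the main term claimed in \eqref{comm V}.

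For the quadratic remainder, the uniform bound $|\tfrac12 z\cdot\nabla^2\psi(\xi)z|\leq \tfrac12\|\nabla^2\psi\|_\infty |z|^2$ reduces it to
\[
\frac1h \cdot O\!\left(\|\nabla^2\psi\|_\infty \int G_{h/2}(z) |z|^2 |V(x-z)|\,dz\right)
= O\!\left(\|\nabla^2\psi\|_\infty \int \frac{|z|^2}{h} G_{h/2}(z)\,|V(x-z)|\,dz\right),
\]
which matches the error term in the stated form (interpreting $\tfrac{|z|^2}{h}G_{h/2}\ast|V|$ as the convolution of $\tfrac{|z|^2}{h}G_{h/2}(z)$ against $|V|$).

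This calculation is essentially routine, so no step presents a serious obstacle; the only point requiring a touch of care is the sign bookkeeping in the identity $zG_{h/2}(z)=-h\nabla G_{h/2}(z)$ and making sure that the factor $\tfrac1h$ is absorbed correctly, converting the linear term into a clean gradient convolution and the quadratic term into the advertised $O$-bound.
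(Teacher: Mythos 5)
Your proposal is correct and follows essentially the same route as the paper: write the commutator as $\int G_{h/2}(z)\bigl[\psi(x-z)-\psi(x)\bigr]V(x-z)\,dz$, Taylor expand $\psi$, absorb the linear term via $z\,G_{h/2}(z) = -h\,\nabla G_{h/2}(z)$, and bound the quadratic remainder by $\|\nabla^2\psi\|_\infty |z|^2$. (Incidentally, the paper's proof has a harmless typographical slip, writing $\|\nabla\psi\|_\infty$ instead of $\|\nabla^2\psi\|_\infty$ in the Taylor remainder, which your version states correctly.)
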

It will be seen that the first term of \eqref{comm V} dominates the
second. Hence we will often write the commutator asymptotically as
\Beqn\label{comm asym}
\frac1h \left[ G_{h/2} \ast, \psi \right] V
\approx       
\left(\nabla \psi \cdot \nabla \right) G_{h/2} \ast V
\quad\text{or}\quad
\left[ G_{h/2} \ast, \psi \right] V
\approx       
h\nabla \psi \cdot \Brac{\nabla G_{h/2} \ast V}.
\Eeqn
\begin{proof} Expanding $\psi(x-z)-\psi(x) = -z\cdot\nabla\psi(x) 
+ O(|z|^2{\Norm{\nabla\psi}_\infty})$, we obtain
\begin{align*}
&  \frac1h \left[ G_{h/2} \ast, \psi \right] V\\
& =  \frac1h\int 
G_{h/2}(z)\Brac{\psi(x-z)-\psi(x)}V(x-z)
\,dz\\
& =  \frac1h\int 
G_{h/2}(z)\Brac{-z\cdot\nabla\psi(x) 
+ O(|z|^2{\Norm{\nabla\psi}_\infty})}V(x-z)
\,dz\\
& = 
\nabla\psi(x)\int\frac{-z}{h}G_{h/2}(z)V(x-z)\,dz + 
O\Brac{\int G_{h/2}(z)\frac{|z|^2}{h}
\Norm{\nabla^2\psi}_\infty \Abs{V(x-z)}\,dz}
\\
& =  
\nabla\psi(x)\cdot(\nabla G_{h/2}*V)(x)
+ O\Brac{\Norm{\nabla^2\psi}_\infty\Brac{\frac{|z|^2}{h}G_{h/2}(z)}*\Abs{V}},
\end{align*}
which is precisely the statement of the lemma.
\end{proof}



\subsection{Proof of the lemmas}
We first give the proof of Lemma \ref{lemma MM} illustrating the 
minimizing movements interpretation of the thresholding scheme.
\begin{proof}[Proof of Lemma \ref{lemma MM}]
      We will only prove the localized version \eqref{MM local}
as the global minimization property \eqref{MM} follows by choosing
$\psi\equiv 1$. The energy-dissipation estimate \eqref{energy estimate} 
follows by successively comparing the functional values for 
\eqref{MM} evaluated at 
$u^n$ and $u^{n-1}$.

      We first note that the combination of convolution and thresholding $u^{n} := \frac{G_h\ast u^{n-1}}{|G_h\ast u^{n-1}|}$ is equivalent to maximizing
      $(u\cdot G_h\ast u^{n-1})(x)$ \Em{pointwise at each $x$} among all
      $u\colon \domain \to \R^2$ with $|u|\leq 1$.
(This simply follows from the fact that for each
$a\neq 0\in\mathbb{R}^2$, $\hat{a}=\frac{a}{\abs{a}}$ is the unique
maximizer of
$a\cdot b$ among $\abs{b}\leq 1$.)
Therefore, for any non-negative function $\psi\geq0$, $u^n$ minimizes the linear functional
      \[
       -\frac2h \int \psi \, u \cdot G_h\ast u^{n-1}\,dx.
      \]
      Using
      \begin{align*}
       -2u\cdot G_h\ast u^{n-1} = &- u \cdot G_h\ast u + ( u-u^{n-1}) \cdot  G_h\ast( u-u^{n-1})\\ & - u^{n-1}\cdot G_h \ast u^{n-1}
       +  u^{n-1}\cdot G_h \ast u -  u\cdot G_h \ast u^{n-1},
      \end{align*}
we see that $u^n$ effectively minimizes the following functional
\begin{multline*}
\frac1h\int \psi(1- u \cdot G_h\ast u)
+
\frac1h\int\psi( u-u^{n-1}) \cdot  G_h\ast( u-u^{n-1})\\
+  \frac1h\int \psi(u^{n-1}\cdot G_h \ast u -  u\cdot G_h \ast u^{n-1}).
\end{multline*}
Note that by \eqref{selfadj}, we can write the last term of the above
as
\[
\frac1h\int u\cdot G_h*(\psi u^{n-1}) -  u\cdot \psi G_h \ast u^{n-1})
= \frac1h\int u\cdot [G_h*, \psi] u^{n-1}.
\]
Subtracting the irrelevant term
\MathSty{\frac1h\int u^{n-1}[G_h*, \psi]u^{n-1}} to the minimization
gives exactly expression \eqref{MM local}.

      Finally, note that with $\psi\equiv 1 > 0$ and tracing back our
      steps we see that thresholding is indeed \emph{equivalent} to solving the global minimization problem \eqref{MM}.
    \end{proof}

Now we continue to the proofs of the other technical lemmas.

      \begin{proof}[Proof of Lemma \ref{lem energy 1}]
	Statement (i)\eqref{E finite diff} 
	follows from a direct computation. Indeed, due to the normalization $\int G(z)\,dz =1$, we have
	\[
	E_h(u,\phi) = \frac1{2h} \int \psi(x)\int G(z) \, 2\,\big(1-u(x)\cdot u(x-\h z)\big) \,dx\,dz.
	\]
	Then \eqref{E finite diff} follows from the identity
	$2(1-u\cdot v) = |u-v|^2 $ valid for any pair of unit vectors 
	$u$ and $v$.

For (i)\eqref{approx Dir}, note that 
	for $u\in W^{1,2}$, the finite differences 
	$\frac{u(x)-u(x-\h z)}{\h}$ in the representation 
	\eqref{E finite diff} of $E_h$ converge to the
	directional derivative $\left(z\cdot \nabla\right) u(x)$ pointwise almost everywhere. Thus we obtain
	by Fatou's lemma
	\begin{equation}\label{liminf preliminary}
	\frac12 \int  \psi(x) \int G(z) \left|\left(z\cdot \nabla \right)u(x) \right|^2dz\,dx \leq \liminf_{h\to 0} E_h(u,\psi).
	\end{equation}
	(Alternatively, it is also straightforward to see that the finite differences converge weakly in $L^2(G(z)dz\,dx)$, which clearly implies \eqref{liminf preliminary}.)
	Next we compute the inner integral explicitly:
\begin{align*}
  \int  G(z)  \left| (z \cdot \nabla) u(x)\right|^2 dz
& =  \sum_{i=1}^d \Abs{\partial_{x_i}u(x)}^2\int G(z)z_i^2\,dz\\
&= \sum_{i=1}^d \Abs{\partial_{x_i}u(x)}^2\iint G^1(z_1)G^{d-1}(z')z_1^2\,dz_1\,dz'
= 2\Abs{\nabla u(x)}^2,
\end{align*}
where we have used the symmetry and factorization property of the kernel 
\eqref{semigroup-factor}.
Therefore, we obtain
\begin{equation}\label{liminf Eh}
\int \psi(x) \left|\nabla u(x)\right|^2 dx \leq  \liminf_{h\to 0} E_h(u,\psi)\quad \text{for all non-negative test functions }\psi.
\end{equation}
	
	Therefore, by linearity in $\psi$, it suffices to prove the statement with $\psi\equiv 1$.
	Note that this will imply the \emph{strong} convergence of the difference quotients in $L^2(G(z)dz\,dx)$.
	
	An application of the fundamental theorem of calculus and the translation invariance of $\int\,dx$ yield
	\begin{align*}
	E_h(u)
	&= \frac12 \iint G(z) \int_0^1\int_0^1 \left(z\cdot \nabla \right) u(x+s\h z) \cdot \left(z\cdot \nabla \right) u(x+t\h z) \,ds\,dt \,dz\,dx\\
	&=\frac12 \iint G(z)  \left(z\cdot \nabla \right) u(x) \otimes z \colon \int_0^1\int_0^1  \nabla u(x+(t-s)\h z) \,ds\,dt \,dz\,dx.
	\end{align*}
	Since $ \left(z\cdot \nabla \right) u(x) \otimes z\in L^2(G(z)dz\,dx)$ and furthermore 
	\[
	\int_0^1\int_0^1  \nabla u(x+(t-s)\h z) \,ds\,dt  \rightharpoonup 
	\nabla u(x) \quad \text{in }L^2(G(z)dz\,dx),
	\]
	we obtain 
	\[
	\lim_{h\to0} E_h(u) =\frac12 \iint G(z) \left|\left(z\cdot \nabla \right) u(x)\right|^2 dz\,dx.
	\]
Therefore we do have
\[
\lim_{h\to0} E_h(u) = \int \left|\nabla u \right|^2dx,
\]
and by linearity and the lower-semicontinuity we obtain \eqref{approx Dir}.
	
(ii) follows from (i) and Jensen's inequality. 
Indeed, we may rewrite $E_{N^2 h}$ as
\[
  E_{N^2 h}(u,\psi)  \overset{(i)}{=} \frac12\iint  G(z)\,\psi(x)\, \Big| \frac1N \sum_{n=1}^N \frac{u(x-(n-1)\h z)-u(x-n\h z)}{\h}\Big|^2 dz\,dx,
\]
and by Jensen's inequality the integrand can be estimated:
	\begin{align*}
	  &\Big| \frac1N \sum_{n=1}^N \frac{u(x-(n-1)\h z)-u(x-n\h z)}{\h}\Big|^2 \\
	  &\qquad \qquad \qquad \qquad\qquad\leq  \frac1N \sum_{n=1}^N \Big|\frac{u(x-(n-1)\h z)-u(x-n\h z)}{\h}\Big|^2.
	\end{align*}
	By the translation invariance of $\int\,dx$ in the case of $\psi\equiv 1$ we obtain the monotonicity 
	\begin{equation}\label{monotonicity global}
	 E_{N^2 h}(u) \leq E_{h}(u).
	\end{equation}
        For non-constant $\psi\geq0$, we obtain
	\begin{align*}
	 E_{N^2 h}(u,\psi) 
	 &\leq \frac12 \iint G(z) \left( \frac1N \sum_{n=1}^N \psi\big(x+(n-1)\h z\big) \right) \Big|\frac{u(x)-u(x-\h z)}{\h}\Big|^2 dx\,dz\\
	 &\leq \frac12 \iint G(z) \left( \psi(x) + \|\nabla \psi\|_\infty N \h |z|\right) \Big|\frac{u(x)-u(x-\h z)}{\h}\Big|^2 dx\,dz.
	\end{align*}
	Using $G(z)|z| \lesssim G_{4}(z)$ and the monotonicity \eqref{monotonicity global} for the error-term we obtain \eqref{monotonicity}.
	
	For (iii), we first observe that by $\int \nabla G_h(z)\,dz =0$ and Cauchy-Schwarz we have
	\begin{align}\label{smuggeling in the average}
	  \int \psi \left| \nabla G_{h/2} \ast u \right|^2 dx 
	&=  \int \psi \left| \int \nabla G_{h/2}(z) \left(u(x) - u(x- z) \right)dz\right|^2  dx\\
	&\leq  \int \psi  \left( \int \left|\nabla G_{h/2}(z) \right| dz\right)\notag
	\left( \int \left|\nabla G_{h/2}(z)\right| \left|u(x) - u(x- z) \right|^2 dz\right) dx.
	\end{align}
	Using the integral estimate $ \int \left|\nabla G_{h/2}(z) \right| dz \lesssim \frac1\h$ for the first inner integral and the pointwise estimate
	$\left|\nabla G_{h/2}(z)\right| \lesssim \frac1\h G_{h}(z)$ for the second one, we obtain
	\[
	    \int \psi \left| \nabla G_{h/2} \ast u \right|^2 dx 
	    \lesssim E_{h}(u,\psi).\qedhere
	\]
    \end{proof}

    \begin{proof}[Proof of Lemma \ref{lem energy 2}]
    In order to verify the monotonicity of the energy let us compute $E_h$ in Fourier space, i.e., in terms of the Fourier coefficients $\hat u(k)$ of $u$.
Indeed, since $|u|\equiv 1$ and by Plancharel we have
      \[
       E_h(u) = \frac1h \int u\cdot \left( u - G_h \ast u\right) dx = \frac1h \sum_{k\in \Z^3} \bar{\hat u}(k) \cdot \left(\hat u -  \widehat{G_h\ast u}\right)(k) 
       = \frac1h \sum_{k\in \Z^3} \big(1-\widehat G_h(k)\big) |\hat u(k)|^2,
      \]
      where the Fourier coefficients (note that $G_h$ is not periodic) of the kernel $G_h$ are given by 
      \[
      \widehat G_h(k) = \int_{\R^3} G_h(z) \,e^{-2\pi i\, k\cdot z} dz  
       = \exp\Big(-4\pi^2 h |k|^2\Big) .
      \] 
      Therefore, we may simply compute the derivative of $E_h$:
      \[
        \frac{d}{dh} E_h(u) = \sum_{k\in \Z^3} \frac{\partial}{\partial h} \left[\frac1h \left(1-\widehat G_h(k)\right)\right] |\hat u(k)|^2.
      \]
      Since
      \[
       \frac{\partial}{\partial h} \left[\frac1h \left(1-\widehat G_h\right)\right]
       =- \frac1{h^2} \left(1-\widehat G_h\right) - \frac1h \partial_h \widehat G_h
       = -\frac1{h^2} \left( 1+ h\partial_h \widehat G_h - \widehat G_h \right),
      \]
      it is enough to check whether
      \begin{equation}\label{fourier positive}
       \left( 1+ h\partial_h \widehat G_h - \widehat G_h \right) \geq 0.
      \end{equation}
      To do so, we write $s:= \Big(\frac{2\pi}{\Lambda}\Big)^2 h |k|^2$.
Then the above holds due to the fact that $e^s \geq 1 + s$ for 
all $s \geq 0$. This concludes the argument for \eqref{dhE}.

          Computing the derivative in \eqref{dhE} in physical space, 
i.e., in terms of $u$ instead of $\hat u$, we obtain
      \[
        \frac{d E_h}{dh} = \int \partial_h \left[ \frac1h \left( 1-u\cdot G_h\ast u \right) \right] dx.
      \]
      Since $G_h$ is the fundamental solution of the heat equation, cf.\ \eqref{G solves heat eqn}, we compute
      \[
       \partial_h \left[ \frac1h \left( 1-u\cdot G_h\ast u \right) \right] = -\frac1{h^2} \left( 1-u\cdot G_h\ast u \right) - \frac1h u \cdot \Delta G_h \ast u.
      \]
      Therefore, using the semi-group property of $G$ and the anti-symmetry of its gradient $\nabla G$  we obtain
      \[
         -\frac1h E_h(u) - \frac1h \int u \cdot \Delta G_h \ast u \,dx = -\frac1h E_h(u)  + \frac1h \int |\nabla G_{h/2}\ast u|^2 dx,
      \]
      which is precisely \eqref{D and E sharp}.
      
    \end{proof}
    
      \begin{rem}
      An alternative approach to the proof above is to use directly the energy dissipation relation of the energy $\frac12 \int |v|^2\,dx$ for the heat equation. Combined with $|u|=1$ we obtain
      \begin{equation}\label{E is average of dirichlet}
      E_h(u) = \frac2h \int_0^{h/2} \int \left|\nabla G_{t}\ast u\right|^2 dx\,dt.
      \end{equation}
      This means that the thresholding energy $E_h$ is nothing but an average of (twice the) Dirichlet energies along the heat flow. By the energy dissipation relation of the Dirichlet energy for the heat equation we obtain in particular
      \[
      \int \left|\nabla G_{h/2}\ast u\right|^2 dx \leq E_h(u)
\leq \int \left|\nabla u\right|^2 dx.
      \]
      Note that in our case, the second inequality is empty because our state variable $u$ is not in $W^{1,2}$.
      In view of \eqref{E is average of dirichlet}, the reverse inequality of \eqref{D and E easy}, namely an estimate of $E_h(u)$ in terms of the Dirichlet energy of $G_{h/2}\ast u$, seems not obvious.
       However, there are two simple cases. If $u\in W^{2,2}$, such an estimate is available. The second easy example is the vector field $u^0$ defined in \eqref{construct u0}. It is easy to check that such an estimate is available as well for $u^0$.
      \end{rem}
    
    \begin{proof}[Proof of Lemma \ref{lem estimate dtu}]
      Using the triangle and Young's inequalities, we have for any two vector fields $u, v$
      \[
        |u-v|^2 \lesssim \left|G_{h} \ast (u-v)\right|^2 + | G_{h}\ast u - u|^2 + |G_{h}\ast v - v|^2.
      \]
      If additionally $|u|\equiv 1$, we have
      \[
       | G_{h}\ast u - u|^2 = 2(1-u\cdot G_h\ast u) - (1-G_h\ast u \cdot G_h\ast u).
      \]
      Therefore when applying this to $u=u^n$ and $v=u^{n-1}$, the symmetry of the kernel $G_h$ implies
      \[
       \int_0^T\int \big|\partial_t^h u^h\big|^2\,dx\,dt
       \lesssim \int_0^T\int \big| G_h\ast \partial_t^h u^h\big|^2 \,dx\,dt
       +2 \sum_{n=0}^N \left( E_h(u^n)- E_{2h}(u^n) \right),
      \]
      which by the energy-dissipation estimate \eqref{energy estimate} yields the claim.
    \end{proof}

   Next we turn to the derivation of the two Euler-Lagrange equations for $u$.
    \begin{proof}[Proof of Lemma \ref{lem EL eq}(i)]
      We first prove \eqref{EL inner}, the Euler-Lagrange equation coming from \emph{inner} variations $u_s$ defined in \eqref{inner variations}.
      From the minimality \eqref{MM}, we obtain
      \begin{equation}\label{EL before computation}
       \frac{d}{ds}\Big|_{s=0} \left( E_h(u^n_s)
	  + \frac1h \int \left( u^n_s-u^{n-1}\right) \cdot  G_h\ast\left( u^n_s-u^{n-1}\right) dx \right) =0.
      \end{equation}

      We begin by computing the first variation $\frac{d}{ds} E_h(u_s^n)$ of the energy $E_h$, which will give us the right-hand side of \eqref{EL inner}.
      We work on a fixed time slice and drop the superscript $n$ for a cleaner notation. Note that
      \[
        \frac{d}{ds}\Big|_{s=0} E_h(u_s)
        = \frac1h \int u\cdot G_h \ast \left( \xi \cdot \nabla u\right) + \xi \cdot \nabla u \cdot G_h \ast u \,dx.
      \]
Since 
$\xi \cdot \nabla u = \nabla \cdot \left( \xi \, u \right) - \left(\nabla \cdot \xi \right) u$ 
and by the symmetry property \eqref{selfadj} of $G_h\ast$, we obtain
      \begin{align*}
       \frac{d}{ds}\Big|_{s=0}E_h(u_s)
       = & \frac1h \int u \left[ \nabla G_h \ast, \xi\cdot \right]  u \, dx
         - \frac2h \int \left(\nabla \cdot \xi \right) u \cdot G_h\ast u\,dx.
      \end{align*}
Now we claim that
      \begin{multline}\label{claim dE}
	\frac1h \int u \left[ \nabla G_h \ast, \xi\cdot \right]  u \, dx\\
= 2 \int \nabla \xi \colon u \nabla^2G_h \ast u \,dx +
	\frac1h \int \left(\nabla \cdot \xi \right) u \cdot G_h\ast u\,dx 
+ O\left(\Norm{\nabla^2\xi}_\infty\h E_h(u)\right).
      \end{multline}
This is intuitively correct, since we expect
      \[
       \left[ \nabla G_h \ast, \xi\cdot \right]  u  \approx \nabla \xi \colon (-z\otimes \nabla G_h(z)) \ast u
      \]
      so that we formally have
      \[
       \frac1h \int u \left[ \nabla G_h \ast, \xi\cdot \right]  u \, dx 
       \approx  \int \nabla \xi \colon u \,(-\frac z h \otimes \nabla G_h(z)) \ast u\, dx.
      \]
      Note that the kernel on the right may be rewritten as $-\frac z h \otimes \nabla G_h = \frac1h G_h \,Id + 2 \nabla^2 G_h$, which concludes the
      formal reasoning for \eqref{claim dE}.

Granted \eqref{claim dE}, we have
\Beqn\label{claim dE 2}
\frac{d}{ds}\Big|_{s=0}E_h(u_s)
= 2 \int \nabla \xi \colon u \nabla^2G_h \ast u \,dx 
-
	\frac1h \int \left(\nabla \cdot \xi \right) u \cdot G_h\ast u\,dx 
+ o(1)
\Eeqn
which is essentially the right hand side of \eqref{EL inner}, modulo
an integration by parts.
      
      In order to make \eqref{claim dE} rigorous, 
we rewrite the integral on its left-hand side as
      \begin{align*}
        \frac1h \int u \left[ \nabla G_h \ast, \xi\cdot \right]  u \, dx
	 =  \frac1{2h}  \iint \nabla G_h (z) \cdot \left(\xi(x) - \xi(x-z)\right) \left(  -2 u(x) \cdot u(x-z)\right) dz \, dx
      \end{align*}
Since $ |u(x)|^2,\,\,|u(x-z)|^2 \equiv 1$ so that 
$2-2u(x)u(x-z) = \abs{u(x)-u(x-z)}^2$ and 
\[
\iint \nabla G_h (z) \cdot \left(\xi(x) - \xi(x-z)\right)\,dz\,dx = 0,
\]
we have
      \begin{eqnarray*}
& & \frac1h \int u \left[ \nabla G_h \ast, \xi\cdot \right]  u \, dx\\
&=& \frac1{2h}
\iint \nabla G_h (z) \cdot \left(\xi(x) - \xi(x-z)\right) \left|u(x) - u(x-z)  \right|^2 dz \, dx\\
&=& \frac1{2h}  \iint \nabla G_h (z) \cdot 
\left(z\cdot \nabla \xi(x) +O(\Norm{\nabla^2\xi}_\infty|z|^2)\right)
\left|u(x) - u(x-z)  \right|^2 dz \, dx.
      \end{eqnarray*}
The above integral splits into two contributions: 
\begin{enumerate}
\item
The one coming from the first-order term $z\cdot \nabla \xi(x)$ simplifies to
      \[
	\frac1{2h}  \iint \nabla \xi \colon z\otimes \nabla G_h (z) (-2 u(x)\cdot u(x-z))\, dz \, dx,
      \]
      because here again the terms including $|u(x)|^2\equiv 1$ or 
$|u(x-z)|^2\equiv 1$ vanish identically. 
As in the conclusion of the above formal argument, we obtain
      the leading-order term in the expansion \eqref{claim dE}.

\item      
The second-order term $O(|z|^2)$ in the expansion of the test vector field 
$\xi$ is negligible as $h\to0$. 
Indeed, since 
\[
\Abs{\frac{|z|^2}{h}\nabla G }
\lesssim \Abs{\frac{|z|^3}{h^2}G_h}
\lesssim \frac1{\sqrt{h}}G_{2h}, 
\]
we have 
      \begin{align*}
       &\frac1{h}  \iint |z|^2|\nabla G_h (z)| \left|u(x) - u(x-z)  \right|^2 dz \, dx\\
       \overset{\phantom{\eqref{E finite diff}}}{\lesssim} 
&\frac1\h \iint G_{2h} (z) \left|u(x) - u(x-z)  \right|^2 dz\,dx
      \overset{\eqref{E finite diff}}{ \sim} \h E_{2h}(u) \overset{\eqref{dhE}}{\leq} \h E_h(u).
      \end{align*}
\end{enumerate}
      This concludes the rigorous justification of \eqref{claim dE}.

Going from \eqref{claim dE 2} to  the symmetrized form on the right-hand side of \eqref{EL inner} requires another manipulation which we provide now. 
To this end, we rewrite
\begin{equation}\label{symmetrize}
2\int\nabla\xi:u\nabla^2 G_h*u\,dx 
=2\int \nabla \xi \colon  G_{h/2} \ast u\,\nabla^2 G_{h/2} \ast u\,dx
+ 2\int \left[ G_{h/2}\ast,\nabla \xi \right] u \, \nabla^2 G_{h/2}\ast u\,dx,
\end{equation}
where we have
used the symmetry and semi-group properties \eqref{semigroup-factor} of the kernel.
The leading-order term is the first right-hand side term, which after integration by parts equals
\[
-2\sum_{i,j} \int \partial_i \xi_j  \partial_i G_{h/2}\ast u\,\partial_j G_{h/2}\ast u\,dx
- 2\int \Delta \xi \cdot  G_{h/2} \ast u\,\nabla G_{h/2} \ast u\,dx.
\]
Here, the first term is the desired term in the Euler-Lagrange equation \eqref{EL inner}; the second one is of lower order: since by $G_{h/2} \ast u \,\nabla G_{h/2} \ast u = \nabla (\frac12 |G_{h/2}\ast u|^2)$, we have
\[
- 2\int \Delta \xi \cdot  G_{h/2} \ast u \,\nabla G_{h/2} \ast u\,dx 
= 2\int \Delta (\nabla \cdot \xi ) \frac12 \left|G_{h/2}\ast u\right|^2 dx,
\]
which by $\int \Delta (\nabla \cdot \xi ) \,dx =0$ is equal to
\[
-\int \Delta (\nabla \cdot \xi ) \left(1- \left|G_{h/2}\ast u\right|^2\right) dx.
\]
Therefore, the first right-hand side term of \eqref{symmetrize} and the right-hand side of the Euler-Lagrange equation \eqref{EL inner} indeed agree to leading order:
\[
\left| - 2\int \Delta \xi \cdot  G_{h/2} \ast u \,\nabla G_{h/2} \ast u\,dx \right|
\lesssim \left\|\nabla^3 \xi \right\|_\infty  \int \left(1- \left|G_{h/2}\ast u\right|^2\right) dx 
=  \left\|\nabla^3 \xi \right\|_\infty h E_h(u) =o(1),
\]
where we have
used the symmetry and semi-group properties \eqref{semigroup-factor} of the kernel once more.

Now we turn to the second right-hand side term of \eqref{symmetrize}. 
By the commutator estimate \eqref{comm V}, we obtain
	\begin{align*}
	\int \left[ G_{h/2}\ast,\nabla \xi \right] u \, \nabla^2 G_{h/2}\ast u\,dx 
	=&h\sum_{i,j,k} \int \partial_i\partial_j \xi_k \colon  \nabla_i G_{h/2}\ast u \, \partial_j\partial_k G_{h/2}\ast u \,dx \\
	&+ O\Big(\|\nabla^3\xi\|_\infty h \int \left|\nabla^2 G_{h/2}\ast u \right|dx\Big).
	\end{align*}
	Note that the second right-hand side term vanishes as $h\to0$. Indeed, using Jensen's inequality, we have
	\[
	 \|\nabla^3\xi\|_\infty h\int \left|\nabla^2 G_{h/2}\ast u \right| dx
	  \lesssim \|\nabla^3\xi\|_\infty \h\left(  h\int \left|\nabla^2 G_{h/2}\ast u \right|^2 dx  \right)^\frac12.
	\]
	Exploiting $\int \nabla ^2 G_{h/2}\,dz =0$ and repeating the argument 
	 \eqref{smuggeling in the average} with $\h \nabla^2 G_{h/2}$ instead of $\nabla G_{h/2}$, we obtain
	 \[
	 \|\nabla^3\xi\|_\infty  \h \sqrt{E_h(u)} =o(1).
	 \]
	By symmetry of the second derivatives of $\xi$, and integrating by parts, we obtain
	\[
	h\sum_{i,j,k} \int \partial_i\partial_j \xi_k   \partial G_{h/2}\ast u \, \partial_j\partial_k G_{h/2}\ast u \,dx
	= -\frac h2 \sum_{i,j,k} \int \partial_i\partial_j (\nabla \cdot \xi )  \partial_i G_{h/2}\ast u \, \partial_j G_{h/2}\ast u \,dx,
	\]
	which is controlled by
	\[
	\| \nabla^3 \xi \|_\infty  h \int  \left|\nabla G_{h/2}\ast u\right|^2 dx \stackrel{\eqref{D and E easy}}{\lesssim } \| \nabla^3 \xi \|_\infty  h E_h(u) =o(1).
	\]

      Next we turn to the first variation of the metric term:
      \[
       \frac{d}{ds}\Big|_{s=0} \frac1h \int \left( u^n_s-u^{n-1}\right) \cdot  G_h\ast\left( u^n_s-u^{n-1}\right) dx 
       = -2\int \frac{u^n-u^{n-1}}{h} \cdot G_h \ast \left( \xi \cdot \nabla u^n\right)dx.
      \]
      Using the semigroup property \eqref{semigroup-factor} of $G_t$,
the self-adjointness \eqref{selfadj} of $G*$, and the relation 
$\xi \cdot \nabla u = \nabla \cdot \left( \xi \, u \right) - \left(\nabla \cdot \xi \right) u$,
      we obtain 
      \begin{multline}
	-2\int \frac{u^n-u^{n-1}}{h} \cdot G_h \ast \left( \xi \cdot \nabla u^n\right)dx\\
	= -2\int G_{h/2}\ast\left(\frac{u^n-u^{n-1}}{h}\right) \cdot \left(\xi\cdot \nabla\right) G_{h/2} \ast u^n \,dx + \text{Err}
\label{EL var of dist}
\end{multline}
where
\Beqn
\text{Err}
= -  2\int G_{h/2}\ast\left(\frac{u^n-u^{n-1}}{h}\right)\cdot \left( \left[\nabla G_{h/2}\ast,\xi\cdot\right]
	u^n - G_{h/2}\ast \left(\left(\nabla \cdot \xi\right) u^n\right)\right) dx.
\Eeqn
Note that, after integration in time, \eqref{EL var of dist} is precisely the left hand side of
\eqref{EL inner} if we can indeed show that, the time integral of 
$\text{Err}$ converges to zero as $h\to 0$.

      We now show that this is indeed the case. Omitting the superscript $n$ for a moment and setting again $u^{z} := u(\, \cdot \,+z)$ we may rewrite the commutator
      \[
	\left[\nabla G_{h/2}\ast,\xi\cdot\right] u = \int \nabla G_{h/2}(z) \cdot \left( \xi^{-z} -\xi \right) \left(u^{-z}-u\right) dz + u \,G_{h/2}\ast (\nabla \cdot \xi).
      \]
      Estimating $|\xi^{-z}-\xi|\leq \|\nabla \xi\|_\infty |z|$ and collecting the two terms involving the divergence of the test vector field $\xi$ we obtain
      \[
       \left[\nabla G_{h/2}\ast,\xi\cdot\right] u - G_{h/2}\ast \left(\left(\nabla \cdot \xi\right) u\right)
       = - \left[ G_{h/2}\ast, (\nabla \cdot \xi ) \right] u + O\Big(\|\nabla \xi\|\int \frac{|z|^2}{h} G_{h/2}(z) |u^{-z}-u| dz\Big).
      \]
      Note that $\frac{|z|^2}{h} G_{h/2}(z) \lesssim G_h(z)$.
      Now we estimate the commutator on the right-hand side:
      \[
       \left| \left[ G_{h/2}\ast, (\nabla \cdot \xi ) \right] u \right| \leq \|\nabla^2 \xi\| \left(|z|\, G_{h/2}(z)\right)\ast |u| \lesssim \|\nabla^2 \xi\| \h.
      \]
      Therefore by Cauchy-Schwarz, we have the following estimate for
the time integral of $\text{Err}$,
\[
      \begin{split}
\int \text{Err}(t)\,dt
\lesssim & \left( \|\nabla \xi\|_\infty + \|\nabla^2 \xi\|_\infty\right)\times\\
       &\left( \iint \big|G_{h/2} \ast \partial_t^h u^h\big|^2 dx \,dt\right)^{\frac12}
       \left( \iint  \Big(h +  \int G_h(z) \big|u^{h}(x-z)-u^h(x)\big|^2 dz \Big)dx\,dt \right)^{\frac12}.
\end{split}
      \]
      Using the representation \eqref{E finite diff} of the energy and the energy-dissipation estimate \eqref{energy estimate}, we see that the error term is bounded by
      \[
       C \left( \|\nabla \xi\|_\infty + \|\nabla^2 \xi\|_\infty\right)\left(E_h(u^0)\right)^{\frac12} \left( hT (1 + E_h(u^0)\right)^{\frac12} \overset{\eqref{Eu0}}{=}O(\h \log h).
      \]
      Together with \eqref{claim dE},
      this concludes the proof of \eqref{EL inner}.
      \end{proof}

    \begin{proof}[Proof of Lemma \ref{lem EL eq}(ii)]
      First we prove the slightly different version
      \begin{equation}\label{EL outer strong}
	\left( Id - u^h\otimes u^h\right) \left( G_h\ast \partial_t^{-h}u^h -\Delta_h u^h\right) =0,
      \end{equation}
      where $\Delta_h$ is an approximation of the Laplacian, given by the following average of second differences:
      \begin{equation*}
	\Delta_h u(x) := \int G(z)  \frac{u(x+\h z) -2 u(x) + u(x-\h z)}{2h}\, dz.
      \end{equation*}
      Note that indeed $\lim_{h\to0} \Delta_h u = \Delta u $ for $u\in W^{2,2}$ and thus \eqref{EL outer strong} is the analogue of the classical equation 
      \begin{equation}\label{HMHF strong}
	(Id-u\otimes u) ( \partial_t u -\Delta u)=0.
      \end{equation}

      In order to derive \eqref{EL outer strong}, we start from the minimality \eqref{MM}, which yields
      \eqref{EL before computation} with $u_s^n$ replaced by $\tilde u_s^n$.
      We use the representation \eqref{E finite diff} of the energy to compute the first variation $ \frac{d}{ds} E_h(\tilde u_s^n)$, again drop the superscript $n$, and use the short-hand notation $u^z:= u(\,\cdot\,  +z)$ in the following computation:
      \[
        \frac{d}{ds}\Big|_{s=0} E_h(\tilde u_s) = \frac1h \int G_h(z) \int ( u-u^{-z})
        \cdot \left( (Id-u\otimes u) \varphi -(Id-u^{-z}\otimes u^{-z}) \varphi^{-z}
        \right) dx\,dz,
        \]
      which because of the translation invariance of $\int \, dx$ is equal to
      \[
       \frac1h \int G_h(z) \int \left[( u-u^{-z}) -(u^z-u)\right]
        \cdot (Id-u\otimes u) \varphi  dx\,dz = -2\int \Delta_h u \cdot (Id-u\otimes u) \varphi\,dx. 
      \]
      The first variation of the metric term is
      \[
        \frac{d}{ds}\Big|_{s=0} 
	   \frac1h \int \left( \tilde u^n_s-u^{n-1}\right) \cdot  G_h\ast\left( \tilde u^n_s-u^{n-1}\right) dx
	  = 2 \int  G_h\ast\left( \frac{u^n-u^{n-1}}h\right) \cdot  (Id-u^n\otimes u^n) \varphi\, dx.
      \]
      This yields the ``classical'' version \eqref{EL outer strong} of the second Euler-Lagrange equation
      \eqref{EL outer weak}.
      
      To obtain the ``weak'' form \eqref{EL outer weak} of the Euler-Lagrange equation \eqref{EL outer strong},
      we proceed as in the well-known case of harmonic map heat flow \cite{chen1989weak}.
      We briefly recall this idea described in Evans' book \cite[\S 5.1.1]{evans1990weak}.
      We follow this more general approach because it is a natural way to derive the equations for $u$ (or the phase) away from the filament, and furthermore because it directly generalizes to higher dimensions, which we will exploit in \S \ref{sec:hmhf}.
      
      The idea is to take the wedge-product of \eqref{EL outer strong} and $u$ which leads to cancellations of nonlinear terms involving derivatives.
      More precisely, testing the $i^{th}$ component of \eqref{EL outer strong} with the $j^{th}$ component $u_j$ times a test function $\zeta$ and subtracting
      the the same quantity with exchanged roles for $i$ and $j$ one obtains that a solution to
      \eqref{HMHF strong} solves
      \[
	\iint \left(u_j\partial_t u_i  - u_i \partial_t u_j\right) \zeta - \left(  u_j \Delta u_i - u_i \Delta u_j\right) \zeta \,dx\,dt=0
      \]
      and when integrating by parts in the last term, the terms $\nabla u_i \cdot \nabla u_j \,\zeta$ cancel and we obtain
      \begin{equation}\label{HMHF compensated compactness}
	\iint \left(u_j\partial_t u_i  - u_i \partial_t u_j\right) \zeta + \left(  u_j \nabla u_i - u_i \nabla u_j\right) \cdot \nabla \zeta \,dx\,dt=0
      \end{equation}
      for all test functions $\zeta$. Note that this formulation has the advantage of being compact in $W^{2,2}$.
      
      In our case of \eqref{EL outer strong}, we follow the same idea. Let us omit the superscript $h$ again for this computation. We obtain in the first step that our solution $u=u^h$ of \eqref{EL outer strong} satisfies
      \[
	\iint \left(u_j\partial_t^{h} G_h \ast u_i  - u_i \partial_t^{h} G_h \ast u_j\right) \zeta - \left(  u_j \Delta_h u_i - u_i \Delta_h u_j\right) \zeta \,dx\,dt=0,
      \]
      where $\Delta_h$ denotes the above mentioned approximation of the Laplacian and $\partial_t^{-h}$ the difference quotient backwards in time.
      The integration by parts gets replaced by the following discrete version.
      Writing again $u^z:= u(\,\cdot\, +z)$ we have
      \begin{align*}
       &\int \left(  u_j \Delta_h u_i - u_i\Delta_h u_j\right) \zeta \,dx\\
       &\qquad  =\frac1{2h} \int G_h(z) \int  u_j [(u_i^z-u_i)-(u_i-u_i^{-z})] \zeta -u_i [(u_j^z-u_j)-(u_j-u_j^{-z} )] \zeta
       \,dx\,dz,
      \end{align*}
      which by the translation invariance of $\int \, dx$ is equal to
      \begin{align*}
     &\frac1{2h} \int G_h(z) \int u_j (u_i^z-u_i) \zeta - u_j^z(u_i^z-u_i ) \zeta^z
      - u_i(u_j^z-u_j)\zeta + u_i^z (u_j^z-u_j) \zeta^z
       \,dx\,dz\\
      &\quad =
       \frac1{2h} \int G_h(z) \int (u_i^z-u_i) (u_j\zeta - u_j^z \zeta^z) -(u_j^z-u_j) (u_i\zeta - u_i^z \zeta^z) 
       \,dx\,dz.
     \end{align*}
     Writing $u_i\zeta - u_i^z \zeta^z = - u_i (\zeta^z-\zeta) - (u_i^z-u_i) \zeta^z$ (and the same for $u_j$ instead of $u_i$), the terms involving the correction $(u_i^z-u_i) \zeta^z$ cancel and we obtain
     \[
      \int \left(  u_j \Delta_h u_i - u_i \Delta_h u_j\right) \zeta \,dx
      =  -\frac1{2h} \int G_h(z) \int
      (u_i^z-u_i) u_j(\zeta^z - \zeta) - (u_j^z-u_j) u_i (\zeta^z - \zeta) 
       \,dx\,dz.
     \]
     Now we may replace the finite difference $\zeta^z-\zeta$ by the gradient, i.e., we expand 
     $\zeta(x+z)-\zeta(x) = z\cdot \nabla \zeta(x) +O(|z|^2)$.
     Since $\nabla G(z) = -\frac z2 G(z)$ is antisymmetric, this yields the first-order term
     \[
        -\int ( u_i\nabla G_h\ast  u_j - u_j\nabla G_h \ast u_i)\cdot \nabla \zeta
       \,dx.
     \]
     The second-order term is controlled by
     \[
      \int \left( \frac{|z|}{\h}\right)^2 G_h(z) \int |u^z-u|\,dx\,dz.
     \]
     Using Jensen's inequality and the relation $\left( \frac{|z|}{\h}\right)^2 G_h(z) \lesssim G_{2h}(z)$
     we obtain the bound
     \[
       \left(\int G_{2h}(z) \int |u^z-u|^2\,dx\,dz\right)^\frac12 \sim \left(h E_{2h}(u)\right)^\frac12
       \overset{\eqref{dhE}}{\leq} \left(h E_{h}(u)\right)^\frac12.
     \]
     By the energy-dissipation inequality \eqref{energy estimate}, this is of order 
     $(h |\log h| )^\frac12 \to0$, which
     concludes the proof of \eqref{EL outer weak}.
    \end{proof}
    
    \subsection{Proof of the localized energy dissipation
Proposition \ref{prop gronwall}}\label{sec:proof prop}
    
     
     We fix $\sigma>0$ sufficiently small such that \eqref{phi hessian} holds for all $t\in[0,T]$. This is possible since by assumption the flow $\Gamma_t$ is smooth. We set $\phi=\phi_\sigma$ and let w.l.o.g.\ $T=Nh$ for some $N\in \N$. 
     As in \cite{lin1998complex}, our aim is to derive a Gronwall-type inequality for the localized energies $E_h(u^h,\phi)$.
     For $n\in \N$, comparing $u^n$ to its predecessor $u^{n-1}$ in the localized minimizing movements interpretation \eqref{MM local} of Lemma \ref{lemma MM} with our specific test function $\phi^n:=\phi(nh)\geq0$  we obtain
     \begin{align*}
       \frac1h \left(E_h(u^n,\phi^n) - E_h(u^{n-1},\phi^{n-1}) \right)
       \overset{\eqref{MM local}}{\leq}& -\int \phi^n \frac{ u^n-u^{n-1}}{h} \cdot  G_h\ast\left( \frac{u^n-u^{n-1}}{h}\right) dx \\
     &- \int \frac{u^n-u^{n-1}}{h} \cdot \frac1h \left[ G_h \ast, \phi^n \right] u^{n-1} \,dx\\
     &+ E_h\big(u^{n-1},\frac{\phi^n-\phi^{n-1}}{h}\big).
     \end{align*}   
We now sum over $n$ from $1$ to $N$ and multiply by the time-step size $h$, to obtain
     \begin{align}
       E_h(u^h(T),\phi(T)) -E_h(u^h(0),\phi(0)) \leq I_1 + I_2 + I_3
\label{int dtEh}
\end{align}
where
\begin{align}
I_1 &=
 -h \sum_{n=1}^N \int \phi^n \frac{ u^n-u^{n-1}}{h} \cdot  G_h\ast\left( \frac{u^n-u^{n-1}}{h}\right) dx,\label{I_1}\\
     I_2 & = - h\sum_{n=1}^N \int \frac{u^n-u^{n-1}}{h} \cdot \frac1h \left[ G_h \ast, \phi^n \right] u^{n-1} \,dx,\label{I_2}\\
     I_3 & = h\sum_{n=1}^N E_h\big(u^{n-1},\frac{\phi^n-\phi^{n-1}}{h}\big).
\label{I_3}
     \end{align}  
     Next we manipulate these three integrals separately.

\Bf{Analysis of $I_3$.} 
This is the easiest term. Since 
$\phi^n-\phi^{n-1} = \int_{(n-1)h}^{nh} \partial_t \phi \,dt$,
the interpolation $u^h(t)$ is piecewise constant in time, and the energy 
$E_h$ is linear in the second argument,
     we may rewrite $I_3$ as
     \Beqn\label{I_3_sim}
      h\sum_{n=1}^N E_h\Big(u^{n-1},\frac{\phi^n-\phi^{n-1}}{h}\Big) = \int_0^T E_h(u^h(t),\partial_t \phi)\,dt.
     \Eeqn
The above form will be combined with other terms in the conclusion
section.
    
\Bf{Analysis of $I_1$.} 
We claim that this term is almost negative. 
Indeed, putting $V:= \frac{u^n-u^{n-1}}{h}$ and $\phi:= \phi^n$, 
we may rewrite each summand of $I_1$ as
     \begin{equation}\label{diss term comm}
     -\int \phi\, V \cdot  G_h\ast V dx
     =      -\int \phi \big|  G_{h/2}\ast V\big|^2  dx - \int G_{h/2}\ast V \cdot 
     \left[ G_{h/2} \ast, \phi\right] V\,dx.
     \end{equation}
     It is thus enough to estimate the time-integral of the second right-hand 
side integral of \eqref{diss term comm} which will be shown to be
an error term. Using \eqref{comm V}, we expand the commutator as
     \begin{align*}
       \left[ G_{h/2} \ast, \phi \right] V
       = &\Brac{\nabla \phi \cdot \nabla G_{h/2} \ast V + O\big(\|\nabla^2 \phi\|_\infty \frac{|z|^2}{h} G_{h/2} \ast |V|\big)}h.
     \end{align*}
     Hence the the contribution of the second right-hand side integral in \eqref{diss term comm}  to $I_1$
can be decomposed into two terms.
     The first-order term is
     \begin{eqnarray*}
    &&  h\sum_{n=1}^N \left|h \int \big(G_{h/2}\ast \partial_t^h u^h\big)
\nabla \phi^n 
\cdot \nabla \Big( G_{h/2}\ast \partial_t^h u^h\Big) dx\right|\\
& = &
    h\sum_{n=1}^N \left|\frac h2 \int \nabla \phi^n \cdot \nabla \Big( \big|G_{h/2}\ast \partial_t^h u^h\big|^2\Big) dx\right|
    = h\sum_{n=1}^N \left|\frac h2 \int \Lap \phi^n \Big( \big|G_{h/2}\ast \partial_t^h u^h\big|^2\Big) dx\right|\\
    & \lesssim &  
\Norm{\nabla^2\phi}_\infty h^2\sum_{n=1}^N \left|\int 
\big|G_{h/2}\ast \partial_t^h u^h\big|^2 dx\right|
\lesssim
\Norm{\nabla^2\phi}_\infty h\int _0^T\int
\Abs{G_{h/2}\ast \partial_t^h u^h}^2\,dx\, dt,
     \end{eqnarray*}
     while the second-order term to $I_1$ is estimated by $\|\nabla^2 \phi\|_\infty\leq C(\sigma)$ times the following 
     expression,
     \begin{align*}
 h^2 \sum_{n=1}^N&\int \Abs{G_{h/2}*\partial_t^h u^h}
\Abs{\Brac{\frac{\abs{z}^2}{h}G_{h/2}}*|\partial_t^h u^h|}\,dx\\
=& h \iint \Abs{G_{h/2}*\partial_t^h u^h}
\Abs{\Brac{\frac{\abs{z}^2}{h}G_{h/2}}*|\partial_t^h u^h|}\,dx\,dt\\
\leq &h
\left(\iint |G_{h/2}*\partial_t^h u^h|^2 dx\,dt\right)^\frac12
\left(\iint \left(\Brac{\frac{\abs{z}^2}{h}G_{h/2}}*|\partial_t^h u^h|\right)^2 dx \,dt\right)^\frac12.
     \end{align*}
Since  $\frac{|z|^2}{h}G_{h/2} \lesssim G_h$  and by the $L^2$-contraction property
of the heat kernel, the second right-hand side integral in $I_1$ 
can be estimated by
\[
\iint \left(\Brac{\frac{\abs{z}^2}{h}G_{h/2}}* |\partial_t^h u^h|\right)^2 dx \,dt
\lesssim \iint |\partial_t^h u^h|^2 dx \,dt.
\]
Combining the two estimates for the first- and second-order terms with the energy-dissipation estimate \eqref{energy estimate} and the estimate \eqref{dtu} on the time derivative of $u$, the contribution of the second right-hand side integral of \eqref{diss term comm} to \eqref{int dtEh}
     is controlled by
     \begin{align*}
	&\left|\int_0^T \int G_{h/2}\ast\partial_t^h u^h \cdot 
     \left[ G_{h/2} \ast, \phi \right] \partial_t^h u^h \,dx\,dt\right| \\
    & \leq  C(\sigma)\left(h E_h(u^0) + h \left( E_h(u^0)\right)^\frac12 \left(\left(1+\frac Th\right) E_h(u^0) \right)^\frac12 \right)
     \overset{\eqref{Eu0}}{\leq}  C(\sigma)(1+T)\h |\log h| \to 0
     \end{align*}
so that the second term of \eqref{diss term comm} is negligible as
$h\to 0$. To conclude, we have
\Beqn\label{I_1_sim}
I_1 = - h\sum_{n=1}^N 
\int\phi\Abs{G_{h/2}*\Brac{\frac{u^n-u^{n-1}}{h}}}^2\,dx
+ o(1).
\Eeqn

\Bf{Analysis of $I_2$.}
This is the leading-order and most difficult term 
in \eqref{int dtEh}. We first give a short formal argument as a motivation.
     Expanding the commutator as in \eqref{comm V}, or heuristically
as in \eqref{comm asym}, we obtain the leading-order term as
     \begin{align}
       -\iint \partial_t^h u^h \cdot \frac1h \left[ G_h \ast, \phi \right] u^h \,dx \,dt
      \approx 
        -2 \iint \partial_t^h u^h \cdot \left(\nabla\phi \cdot \nabla \right) G_{h}\ast u^{h}\,dx\,dt,\label{I2 heuristic}
     \end{align}
     which is roughly the left-hand side of the Euler-Lagrange equation \eqref{EL inner} with $\xi = \nabla \phi$.

     In order to make the above rigorous, we make use of some cancellations, in particular the fact that the vectors $\partial_t^h u^h$ and $u^h$ are almost orthogonal.
     
     First, we observe that because of the antisymmetry
$\int V [G_h\ast,\phi]V\,dx =0$ of the commutator,
we may write this integral as
     \[
     I_2 = -h\sum_{n=1}^N 
     \int\frac{u^n-u^{n-1}}{h} \cdot  \frac1h \left[ G_h \ast, \phi^n \right]\Big(\frac{u^n+u^{n-1}}{2}\Big)  \,dx.
     \]
     There are two cancellations in this integral we will take advantage of:
     \begin{itemize}
     \item To first order, the commutator behaves like $\nabla\phi \cdot\nabla G_h\ast \big(\frac{u^n+u^{n-1}}{2}\big)$, which improves the order of the kernel from $\frac1h$ with the commutator to $\frac1\h$ for the kernel of the leading-order term.
     \item The two vectors $\frac{u^n-u^{n-1}}{h}$, $\frac{u^n+u^{n-1}}{2}$ are orthogonal, and this is still approximately true after 
     the convolution with the heat kernel.
     \end{itemize}
Now we fix $n$ and write for simplicity $u:=u^n$, $v:=u^{n-1}$ and $\phi:= \phi^n$. Then we rewrite each summand in $I_2$ 
in the following more symmetric fashion:
     \begin{align}
     &\int \frac{u-v}{h} \cdot  \frac1h \left[ G_{h} \ast, \phi \right]\Big(\frac{u+v}{2}\Big) dx \label{I2.summand}\\
	     &=
	     \int G_{h/2}\ast\Big(\frac{u-v}{h}\Big) \cdot  \frac1h \left[ G_{h/2} \ast, \phi \right]\Big(\frac{u+v}{2}\Big) -
	     \frac1h \left[ G_{h/2} \ast, \phi \right]\Big(\frac{u-v}{h}\Big) \cdot G_{h/2}\ast \Big(\frac{u+v}{2}\Big)  \,dx.
	     \nonumber
	     \end{align}

Second, we will dissect carefully the commutators appearing above.
The computation is more elaborate than the simple first order asymptotics
\eqref{comm V}.
For any vector field $w$ and any smooth test function $\phi$,
we expand the commutator now as
	     \begin{align*}
	     \left[ G_{h/2} \ast, \phi \right] w 
	     =&  \int G_{h/2}(z) \left(\phi(\,\cdot\,-z)-\phi(\,\cdot\,)\right) w(\,\cdot\,-z) \,dz\\ 
	     =&   \Big(\Big(-  \partial_i\phi z_i  + \frac12 \partial_{ij}\phi 
				     z_i z_j  - 
				     \frac16 \partial_{ijk}\phi z_iz_jz_k\Big) G_{h/2}\Big) \ast  w + O\Big( |z|^4 G_{h/2}\ast |w|\Big),
	     \end{align*}
where we have summed over repeated indice.
Using the identities
\begin{align*}
- z_i G_{h/2} = h \partial_i G_{h/2},\quad  z_i z_j G_{h/2} = h^2 \partial_i\partial_j G_{h/2} - h \delta_{ij} G_{h/2},\\
 - z_iz_jz_k G_{h/2} = h^3 \partial_i\partial_j\partial_k G_{h/2} - h\big(\delta_{ij} z_k + \delta_{ik} z_j + \delta_{jk}z_i\big)G_{h/2},      
\end{align*}
and the equality of mixed partials, we obtain
\begin{align}
\frac1h& \big[ G_{h/2} \ast,\,\phi \big] w\nonumber\\
= \,\,&\left(\nabla \phi \cdot \nabla\right) G_{h/2}\ast w + \frac h2 \nabla^2 \phi \colon \nabla^2 G_{h/2}\ast w - \frac12 \Delta \phi\, G_{h/2}\ast w
\label{comm higher order}\\
       &+\frac {h^2}{6} \sum_{i,j,k} \partial_i\partial_j\partial_k\phi \, \partial_i\partial_j\partial_k G_{h/2}\ast w - \frac h 2 \left(\nabla \Delta \phi \cdot \nabla \right) G_{h/2}\ast w
      + O\Big(\frac{ |z|^4}{h} G_{h/2}\ast |w|\Big).\nonumber
\end{align}

In order to concentrate on the key issue, we will first write down the dominating terms and
show the negligibility of the error terms afterwards.
Using the above expansion of the commutator for 
the difference quotient $w=\frac{u-v}{h}$ or the average $w=\frac{u+v}{2}$, 
respectively, we have, up to leading order, 
\begin{align}
& \text{Term \eqref{I2.summand}}\nonumber\\
\approx &\int G_{h/2}\ast\Big(\frac{u-v}{h}\Big) \cdot  \left(\nabla \phi \cdot \nabla\right) G_{h/2}\ast \Big(\frac{u+v}{2}\Big)
-
\left(\nabla \phi \cdot \nabla\right) G_{h/2}\ast \Big(\frac{u-v}{h}\Big) \cdot G_{h/2}\ast \Big(\frac{u+v}{2}\Big)\,dx \label{toward EL err 1}\\
= & 2\int G_{h/2}\ast\Big(\frac{u-v}{h}\Big) \cdot  \left(\nabla \phi \cdot \nabla\right) G_{h/2}\ast \Big(\frac{u+v}{2}\Big) dx
+\int \Delta \phi \,G_{h/2}\ast \Big(\frac{u-v}{h}\Big) \cdot G_{h/2}\ast\Big(\frac{u+v}{2}\Big)\,dx
\label{toward EL err 2}\\
\approx& 2\int G_{h/2}\ast\Big(\frac{u-v}{h}\Big) \cdot  \left(\nabla \phi \cdot \nabla\right) G_{h/2}\ast \Big(\frac{u+v}{2}\Big)\,dx.
\label{toward EL err 3}
     \end{align}
Hence,
     \begin{align}
     I_2 \approx &-2h\sum_{n=1}^N \int G_{h/2}\ast\Big(\frac{u^{n}-u^{n-1}}{h}\Big) \cdot  \left(\nabla \phi^n \cdot \nabla\right) G_{h/2}\ast \Big(\frac{u^n+u^{n-1}}{2}\Big) dx\nonumber\\
      \approx& -2 \iint \!G_{h/2} \ast \partial_t^{-h}u^h \cdot \left(\nabla \phi \cdot \nabla \right) G_{h/2} \ast u^h \,dx\,dt 
\quad\quad\text{(since $u^n \approx u^{n-1}$).}
\label{toward EL err 4}
     \end{align}
     which is
     precisely the left-hand side of the Euler-Lagrange equation \eqref{EL inner} with $\xi=\nabla \phi$. 
     Hence we expect to have
     \Beqn\label{I_2_sim}
      I_2 =
      -\frac1h \iint \Delta \phi\, ( 1-u^h \cdot G_h\ast u^h) \,dx\,dt\\
        +2 \sum_{i,j}\iint \!\partial_i\partial_j \phi \,\partial_i G_{h/2}\ast u^h \cdot \partial_j G_{h/2} \ast u^h \,dx\,dt + o(1).
     \Eeqn
Note that writing $I_2$ as \eqref{toward EL err 4} is
essentially the same as what we first noted in \eqref{I2 heuristic},
modulo the splitting of $G_h*$ into two separate $G_{h/2}$, but now all the
intermediate approximations and errors arising from
\eqref{I2.summand} to \eqref{toward EL err 1}--\eqref{toward EL err 4}
are spelled out carefully.
     
     \Bf{Conclusion.} With the above analysis of $I_1, I_2$, and $I_3$, 
in particular, combining expressions \eqref{I_1_sim}, \eqref{I_2_sim},
and \eqref{I_3_sim}, we obtain, 
     \begin{align}
       E_h(u^h(T),\phi(T))
       \leq &E_h(u^0,\phi(0))  -\int_0^T \int \phi \big| G_{h/2}\ast\partial_t^h u^h\big|^2 dx\,dt \notag\\
     &+2 \int_0^T \sum_{i,j}\int  \partial_i \partial_j \phi\; \partial_i G_{h/2} \ast u^h \cdot \partial_j G_{h/2} \ast u^h\,dx\,dt\label{int dtEh2}\\
     &+ \int_0^T \frac1h\int \left(\partial_t \phi-\Delta \phi\right) \big(1- u^h\cdot G_h\ast u^h \big) \,dx\,dt +o(1). \notag 
     \end{align}
     Recall that the missing argument that the error is indeed $o(1)$ as $h\to0$ will be given shortly.
     
     Now we decompose the last two right-hand side integrals into their near- and far-field contributions corresponding to respectively the
regions
     \[
      A_\sigma := \{(x,t) \colon d(x,\Gamma_t) <\sigma\}
\quad\text{and}\quad
      A^c_\sigma := \{(x,t) \colon d(x,\Gamma_t) \geq\sigma\}.
     \]
     In the far-field region $A^c_\sigma$, we have $\phi\geq \frac12 \sigma^2$ and therefore $|\partial_t \phi -\Delta \phi|, |\partial_i\partial_j \phi| \leq C(\sigma) \,\phi$.
     This implies that we may estimate the far-field contribution to the right-hand side of \eqref{int dtEh2} by
     \begin{align*}
     & 2 \iint_{A^c_\sigma}  \partial_i \partial_j \phi\; \partial_i G_{h/2} \ast u^h \cdot \partial_j G_{h/2} \ast u^h\,dx\,dt
     +\frac1h \iint_{A^c_\sigma} \left(\partial_t \phi-\Delta \phi\right) \big(1- u^h\cdot G_h\ast u^h \big) \,dx\,dt\\ &\qquad \overset{\eqref{D and E easy}}{\leq} C(\sigma)  \frac1h \iint \phi \,\big( 1-u^h\cdot G_h\ast u^h \big)\,dx\,dt.
     \end{align*}

     Next we turn to the more interesting near-field contribution. In order to control the last two right-hand side integrals of \eqref{int dtEh2} over 
region $A_\sigma$, we use the expansion of the heat operator \eqref{phi expansion of heat operator} and the estimate 
     on the Hessian \eqref{phi hessian} to obtain
     \begin{align*}
       & 2 \iint_{A_\sigma}  \partial_i \partial_j \phi\; \partial_i G_{h/2} \ast u^h \cdot \partial_j G_{h/2} \ast u^h\,dx\,dt
     +\frac1h \iint_{A_\sigma} \left(\partial_t \phi-\Delta \phi\right) \big(1- u^h\cdot G_h\ast u^h \big) \,dx\,dt\\
       &\qquad\leq 2 \iint_{A_\sigma} \big|\nabla G_{h/2} \ast u^h\big|^2 dx\,dt+ \frac1h\iint_{A_\sigma} \left(-2 + C\phi\right) \big( 1-u^h\cdot G_h\ast u^h \big)\,dx\,dt.
     \end{align*}
     Note that $\phi$ ``cuts out'' the vorticity set $\Gamma_t$ and therefore we expect $E_h(u^h,\phi)$ to stay finite as $h\to0$. However, there are two competing diverging terms, namely the Dirichlet energy $2\int \big|\nabla G_{h/2} \ast u^h\big|^2 dx$ and 
     the thresholding energy $-2 E_h(u^h)$. Miraculously, the monotonicity \eqref{dhE} of the energy in the time step size, or more exactly 
its equivalent statement \eqref{D and E sharp} (in Lemma \ref{lem energy 2}) provides precisely the correct relationship between the two quantities. 

With the above observation, we now compute:
\begin{align}
& E_h(u^h(T),\phi(T))\nonumber\\
\leq &
E_h(u^0,\phi(0)) - \int_0^T \int \phi \big| G_{h/2}\ast\partial_t^h u^h\big|^2 dx\,dt
+\frac{C(\sigma)}{h}\int_0^T \int\phi\Brac{1- u^h \cdot G_h*u^h}\,dx\,dt
\nonumber\\
& + 
2 \iint_{A_\sigma} \big|\nabla G_{h/2} \ast u^h\big|^2 dx\,dt+ \frac1h\iint_{A_\sigma} \left(-2 + C\phi\right) \big( 1-u^h\cdot G_h\ast u^h \big)\,dx\,dt
\nonumber\\
\leq & 
E_h(u^0,\phi(0)) - \int_0^T \int \phi \big| G_{h/2}\ast\partial_t^h u^h\big|^2 dx\,dt
+\frac{C(\sigma)}{h}\int_0^T \int\phi\Brac{1- u^h \cdot G_h*u^h}\,dx\,dt\nonumber\\
&
+ 
2 \iint_{A_\sigma} \big|\nabla G_{h/2} \ast u^h\big|^2 dx\,dt 
-2 \iint_{A_\sigma} \frac1h\big( 1-u^h\cdot G_h\ast u^h \big)\,dx\,dt.
\label{last}
\end{align}
We can bound the line \eqref{last} from above by
\begin{align}
& 2 \iint \big|\nabla G_{h/2} \ast u^h\big|^2 dx\,dt 
-2 \iint_{A_\sigma} \frac1h\big( 1-u^h\cdot G_h\ast u^h \big)\,dx\,dt\nonumber\\
\leq & 
2\int_0^T E_h(u^h)\,dt
-2 \iint_{A_\sigma} \frac1h\big( 1-u^h\cdot G_h\ast u^h \big)\,dx\,dt
\quad\text{(by \eqref{D and E sharp})}\nonumber\\
\leq &
2 \iint_{A_\sigma^c} \frac1h\big( 1-u^h\cdot G_h\ast u^h \big)\,dx\,dt
\leq
\frac{C(\sigma)}{h} \iint\phi\big( 1-u^h\cdot G_h\ast u^h \big)\,dx\,dt.
\end{align}
The above finally gives
\[
      E_h(u^h(T),\phi(T))
       \leq E_h(u^0,\phi(0))  -\int_0^T \!\int \phi \big| G_{h/2}\ast\partial_t^h u^h\big|^2 dx\,dt 
     +C(\sigma) \int_0^T\! E_h(u^h,\phi)\,dt+o(1). 
\]
     A standard Gronwall-argument yields
     \eqref{gronwall}.\hfill \qed

\Bf{Analysis of the error terms.}
Now we estimate the errors coming from \eqref{toward EL err 1}--\eqref{toward EL err 4}.

\Em{Error in \eqref{toward EL err 4}.} 
This error is due to replacing $u^{n-1}$ by $u^n$ and is bounded by
\begin{align*}
& \left|h\iint \big(G_{h/2}*(\partial_t^{-h} u^h)\big)
\nabla \phi \cdot \nabla \big(G_{h/2} \ast \partial_t^{-h} u^h\big)dx
\,dt\right| \\
=&
\left|h\iint \nabla \phi \cdot \nabla \big(\big|G_{h/2} \ast \partial_t^{-h} u^h\big|^2\big)dx\,dt\right| \\
\lesssim 
&\|\nabla^2 \phi\|_\infty h\iint \big|G_{h/2} \ast \partial_t^{-h} u^h\big|^2 dx\,dt
\overset{\eqref{energy estimate}}{=} O(h |\log h|)
\to 0\quad\text{as}\quad h\to 0.
\end{align*}

\Em{Error in \eqref{toward EL err 3}.} 
This is caused by omiting the second term in \eqref{toward EL err 2}
which involves the Laplacian of $\phi$, $\Delta\phi$. We will show that
after integration in time, this term is negligible.
For this, we use the identity $(a-b)\cdot (a+b) = |a|^2 - |b|^2$ to see that
     \begin{align}
      &h \sum_{n=1}^N\int \Delta \phi^n \,G_{h/2}\ast \Big(\frac{u^n-u^{n-1}}{h}\Big) \cdot G_{h/2}\ast\Big(\frac{u^{n}+u^{n-1}}{2}\Big)  dx \label{finding EL step2}\\
      =\,\,&h \sum_{n=1}^N\int \Delta \phi^n \frac12 \left( \frac1h \left(1-\big| G_{h/2}\ast u^{n-1}\big|^2\right) - \frac1h \left(1-\big| G_{h/2}\ast u^n\big|^2\right) \right) dx\notag\\
      \leq\,\, &h \sum_{n=1}^N \|\partial_t \nabla^2 \phi\|_\infty\frac h2 \int  \frac1h \left(1-\big| G_{h/2}\ast u^{n}\big|^2 \right) dx \notag\\
      \,\,&+ \|\nabla^2\phi\|_\infty  \frac h2 \int \frac1h \left(1-\big| G_{h/2}\ast u^N\big|^2\right) + \frac1h \left(1-\big| G_{h/2}\ast u^0\big|^2\right) dx. \notag
     \end{align}
     Note that we used that the terms $1-|G_{h/2}\ast u|^2$ are non-negative. By the symmetry of $G$, we have
     \[
      \int \frac1h \big(1- |G_{h/2}\ast u|^2\big) dx = E_h(u)
     \]
     and hence, using the energy-dissipation estimate \eqref{energy estimate}, we obtain the bound
     \[
      T h \|\partial_t \nabla^2 \phi\|_\infty E_h(u^0) + \|\nabla^2\phi\|_\infty h E_h(u^0) \\
      \lesssim \left(\|\partial_t \nabla^2 \phi\|_\infty+\|\nabla^2\phi\|_\infty\right) (1+T) h |\log h|,
     \]
     which vanishes in the limit $h\to0$.

\Em{Error in \eqref{toward EL err 1}.} 
The error in this line comes
from omitting the higher-order terms in the expansion of the commutators
\eqref{comm higher order}.
     To this end, we integrate by parts all derivatives which are on the time-derivative $G_{h/2} \ast \big(\frac{u-v}{h}\big)$.
     The resulting terms are of the form
     \begin{align}\label{towards EL 2nd order1}
      \int  h\,P(\nabla)\nabla^2 \phi \; Q(\nabla)G_{h/2}\ast \Big(\frac{u+v}{2}\Big) \cdot G_{h/2}\ast \Big(\frac{u-v}{h}\Big) dx,
     \end{align}
     where the linear differential operators $P(\nabla)$, $Q(\nabla)$ are both of order $\leq 2$. For the terms for which there is no derivative falling 
onto $G_{h/2}\ast \Big(\frac{u+v}{2}\Big)$, we
     proceed as in the lines following \eqref{finding EL step2}. Therefore, we may assume that the polynomial $Q$ is either homogeneous of order $1$, or $2$.
     In these cases we estimate by Cauchy-Schwarz after integration in time:
     \begin{align*}
     &h \sum_{n=1}^N \int  h\,P(\nabla)\nabla^2 \phi^n \; Q(\nabla)G_{h/2}\ast \Big(\frac{u^n+u^{n-1}}{2}\Big) \cdot G_{h/2}\ast \Big(\frac{u^n-u^{n-1}}{h}\Big) dx\\
     & \lesssim h \| P(\nabla)\nabla^2 \phi\|_\infty 
     \left( h \sum_{n=0}^N \int \left| Q(\nabla)G_{h/2}\ast u^n\right|^2 dx \right)^{\frac12}
     \left( h \sum_{n=1}^N \int \left| G_{h/2}\ast \Big(\frac{u^n-u^{n-1}}{h}\Big)\right|^2 dx \right)^{\frac12}.
     \end{align*}
     By the energy-dissipation estimate, the last right-hand side term is estimated by $E_h(u^0)$ while the first right-hand side integral can be manipulated as follows. By our assumption on $Q$ we have $\int Q(\nabla) G_{h/2}(z) \,dz  =0$ and therefore by Jensen's inequality
     \begin{align*}
      \int \left| Q(\nabla)G_{h/2}\ast u\right|^2 dx = &\int \left| \int Q(\nabla)G_{h/2}(z) \left(u(x-z)-u(x)\right) dz\right|^2dx\\
       \lesssim & \int \left| Q(\nabla)G_{h/2}(z)\right| dz \int \left|Q(\nabla)G_{h/2}(z)\right| \left| u(x-z)-u(x)\right|^2\, dz\,dx.
     \end{align*}
     Since $Q$ is of degree $\leq 2$, we have the integral estimate $\int \left|Q(\nabla)G_{h/2}\right| dz \lesssim \frac1h$ and the pointwise estimate 
     $\left|Q(\nabla)G_{h/2}(z)\right| \lesssim \frac1h G_h(z)$.
Thus
      \begin{align}\label{towards EL 2nd order2}
      \int \left| Q(\nabla)G_{h/2}\ast u\right|^2 dx 
       \lesssim \frac1h E_h(u).
     \end{align}
     Plugging in this estimate and using the energy-dissipation estimate \eqref{energy estimate} once more, we obtain the bound
     \[
      h \| P(\nabla)\nabla^2 \phi\|_\infty \left( T \frac1h E_h(u^0)\right)^\frac12 \left(E_h(u^0)\right)^{\frac12} 
      \overset{\eqref{Eu0}}{\lesssim} \h \| P(\nabla)\nabla^2 \phi\|_\infty (1+T) \left|\log h\right|,
     \]
     which as before vanishes as $h\to0$.
     
The third-order term works in the same fashion---only that the differential operators $P$ and $Q$ add up to order $3$ instead of $2$. 
     This weakens the estimate \eqref{towards EL 2nd order2} by an order of $\frac1h$.
     But we have one more power of $h$ in the prefactor so that we obtain \eqref{towards EL 2nd order1} with the prefactor $h^2$ instead of $h$.
     
     Finally, we estimate the error coming from the fourth-order term in the expansion of the commutator by
     \begin{align*}
       &h \sum_{n=1}^N \int \left(\frac{|z|^4}{h} G_{h/2}\right) \ast \Big|\frac{u^n-u^{n-1}}{h}\Big| + \Big| G_{h/2}\ast \Big(\frac{u^n-u^{n-1}}{h}\Big)\Big| 
       \int\frac{|z|^4}{h} G_{h/2}(z)\,dz\, dx\\
       &\lesssim h \iint \big| \partial_t^h u^h \big| \,dx\,dt \lesssim h T^{\frac12} \left(\iint\big| \partial_t^h u^h \big|^2 \,dx\,dt \right)^\frac12 
       \overset{\eqref{dtu}}{\lesssim} \h (1+T) E_h(u^0) \to0.
     \end{align*}

     \medskip 
%
%
     
With the above, we have thus taken care of all the error terms, concluding
the proof of Proposition \eqref{prop gronwall}.

    \subsection{Proof of the main result Theorem \ref{thm filament}}\label{sec:proof thm}
    
    \noindent \emph{Step 1: Compactness.}
    
    From our localized energy-dissipation inequality of Proposition \ref{prop gronwall}, the relation between Dirichlet and thresholding energy of Lemma \ref{lem energy 1}, and the estimate on the initial data \eqref{Eu0} we obtain for any fixed $\sigma>0$
    \begin{equation}
      \sup_{t\in(0,T)} \int \phi_\sigma\big| \nabla G_{h/2}\ast u^h \big|^2dx + \int_0^T \int \phi_\sigma \, \big| G_{h/2}\ast \partial_t^h u^h \big|^2 dx\,dt \leq C(\sigma)+o(1).
    \end{equation}
    Therefore, we may extract a subsequence which converges weakly to a map $u \in H^1_{\text{loc}}(\domain \setminus \Gamma)$ in the sense that
    \begin{eqnarray}
     G_{h/2} \ast u^h \to u &&\text{in } L^2, \label{comp Gu}\\
     \nabla G_{h/2} \ast u^h \rightharpoonup \nabla u  &&\text{in }L^2_{\text{loc}}((\domain\times(0,T) ) \setminus \Gamma),\quad \text{and}\nonumber\\
     \partial_t^h G_{h/2} \ast u^h \rightharpoonup \partial_t u &&\text{in }L^2_{\text{loc}}((\domain\times(0,T) ) \setminus \Gamma).\nonumber
    \end{eqnarray}
    The convergence \eqref{comp Gu} upgrades to the $L^2$-convergence of $u^h$  since by Jensen's inequality and $G_{h/2}\lesssim G_h$ we have 
    \[
\begin{split}
     \int  |u-G_{h/2}\ast u|^2dx 
     & = \int \left| \int G_{h/2}(z) (u(x)-u(x-z))\,dz\right|^2 dx\\
     & \lesssim \int \int G_{h/2}(z) \big|u(x)-u(x-z)\big|^2dz\,dx
    \lesssim h E_{h}(u^h,\zeta) \lesssim h|\log h| \to 0.
    \end{split}
   \] 
    \noindent \emph{Step 2: Convergence of the vorticity set.}
    
    Using again Proposition \ref{prop gronwall} we obtain the convergence of the vorticity set.
    
    \noindent \emph{Step 3: Convergence of $u^h$ away from the vorticity set.}
    
    By \eqref{comp u}--\eqref{comp dtu} we may pass to the limit in the weak form of the Euler-Lagrange equation \eqref{EL outer weak} whenever the test function $\zeta$ localizes away from $\Gamma$.
    
%
%

    \section{Other variants of thresholding}\label{sec:pinning}
        
    In this section, we discuss variants of thresholding which are
motivated by the minimizing movements interpretation.
    
    \subsection{Extensions to Neumann and Dirichlet boundary conditions}
    For convenience and a cleaner presentation, up to now we have 
restricted ourselves to the simplest case of periodic boundary conditions.
    However, when working on a bounded domain $\Omega\subset\mathbb{R}^d$, 
it is more natural to consider Neumann or Dirichlet boundary conditions 
corresponding to
\MathSty{\frac{\partial u}{\partial n}\big|_{\partial\Omega} = 0}
or \MathSty{u\big|_{\partial\Omega} = \bar{u}}
for the state variable $u$.
Here $\bar{u}$ is some prescribed function on $\partial\Omega$. 
We will show that it is 
very easy to incorporate these boundary conditions while keeping the 
same heat kernel $G_h$ for the whole space. Hence numerical efficiency
and the appealing simplicity of the scheme can be maintained.
The main idea is to extend $u$ appropriately from $\Omega$ to $\mathbb{R}^d$.

    \medskip
    
    Let $\Omega\subset \R^3$ be a smooth bounded domain. In order to solve the equation with Neumann boundary conditions on $\partial \Omega$, we first rewrite 
the minimization problem \eqref{MM} in the following equivalent symmetrized form
   \begin{multline*} \min \bigg\{ \; \frac1h	 \int_{\R^3}\int_\domain G_h(x-y)  \left(1-u(x)\cdot u(y)\right)dx\,dy\\
   + \frac1h \int_{\R^3}\int_\domain G_h(x-y) \left(u-u^{n-1}\right)(x) \cdot  \left(u-u^{n-1}\right)(y)\,dx\,dy \bigg\},
   \end{multline*}
Note that the outer integral 
$\int_{\R^3} \,dy$ in \eqref{MM} can be replaced by $\int_\domain \,dy$ 
without changing the integral drastically as the kernel $G_h(x-y)$ decays 
exponentially in $\abs{x-y}$.
   Therefore a natural generalization of this minimization problem to 
the bounded domain $\Omega$ with Neumann boundary conditions is
   \begin{multline}
\label{MM Neumann}
 \min \Big\{ \frac1h	\int_\Omega \int_\Omega G_h(x-y)  \left(1-u(x)\cdot u(y)\right)\,dx\,dy\\
  + \frac1h \int_\Omega\int_\Omega G_h(x-y) \left(u-u^{n-1}\right)(x) \cdot  \left(u-u^{n-1}\right)(y)\,dx\,dy \Big\},
    \end{multline}
    where the minimum runs over all vector fields $u\colon \Omega \to \R^2$ with $|u|\leq1$ a.e.\ in $\Omega$.

This may be interpreted as extending $u$ by zero off $\Omega$. Note that $u=0$ has equal distance to all points on the sphere $\mathbb{S}^1$ and therefore does not prefer any of these values. Another way to interpret the minimization problem \eqref{MM Neumann} is that there is no interaction with points outside the domain $\Omega$ and since no boundary conditions are enforced in the minimization procedure, it is reasonable to expect the minimizer to attain natural boundary conditions, i.e., Neumann boundary conditions.

   Note that for any unit vector field $u\colon\Omega\to\mathbb{S}^1$, 
the first term in \eqref{MM Neumann} can also be written as a weighted average
of finite differences,
   \begin{equation}\label{E Omega}
    E_{\Omega,h}(u):= \frac1h\int_\Omega \int_\Omega G_h(x-y) \left(1-u(x)\cdot u(y)\right)dx\,dy
    =     \frac12 	\int_\Omega \int_\Omega G_h(x-y) \left| \frac{u(x)-u(y)}{\h}\right|^2\,dx\,dy.
   \end{equation}
   Hence similar to Lemma \ref{lem energy 1}, 
it can be shown to converge to the Dirichlet energy 
\MathSty{\int_\Omega |\nabla u|^2}.
   
We can now follow the proof of Lemma \ref{lemma MM} line by line with 
$u$ and $u^{n-1}$ replaced by $\chi u$ and $\chi u^{n-1}$, respectively, 
where $\chi=\chara_\Omega$. It holds again that 
\eqref{MM Neumann} is equivalent to the 
minimization problem
   \[
   \min\CurBrac{-\frac2h \int \chi u \cdot G_h \ast(\chi u)\,dx,\,\,\,
u:\Omega\to\mathbb{R}^2,\,\,\,\abs{u}\leq 1}.
   \]
    The solution can then be read off as
    \begin{equation}\label{MBO Neumann}
    u^n = \frac{v^n}{|v^n|},\quad \text{where } v^n= G_h\ast (\chi u^{n-1}).
    \end{equation}
Therefore, we propose the following algorithm.
    
    \begin{algo}\Bf{(Neumann boundary conditions)}\label{algo Neumann} 
      Let the initial condition and time-step size be 
$u^0 \colon \Omega \setminus \Gamma_0 \to \R^2$ and $h >0$. 
Given the configuration $u^{n-1}$ at time $t=(n-1)h$, the configuration 
$u^n$ at time $t=nh$ is constructed by the following two operations:
      \begin{enumerate}
       \item Diffusion
       \MathSty{v^n:= G_h\ast (\chara_\Omega u^{n-1})};
       \item Thresholding/Projection onto $\mathbb{S}^1$: 
\MathSty{u^n := \frac{v^n}{|v^n|}} in $\Omega$.
      \end{enumerate}
    \end{algo}
In the above, $\Gamma_0$ is some initial (smooth) curve in $\Omega$. 
Again we require that $u^0$ is well-prepared according to
Definition \ref{def initial}.

    \medskip
    
    Next we consider Dirichlet boundary conditions given by a 
$W^{1,2}_{\text{loc}}$ function $\bar u$ on $ \R^3\setminus \Omega$ with $|\bar u|=1$ a.e.. We start from \eqref{MBO Neumann} and make the following ansatz
    \begin{equation}\label{MBO Dirichlet}
    	u^n := \frac{v^n}{|v^n|},\quad \text{where } v^n= G_h\ast (\chi u^{n-1} +(1-\chi) \bar u),
    \end{equation}
i.e., we essentially set $u$ to be $\bar u$ outside $\Omega$.
    The corresponding minimization problem is then
    \begin{multline}\label{MM Dirichlet}
\min \Big\{  E_{\Omega,h}(u)+ \frac1h \int_\Omega\int_\Omega G_h(x-y) \left(u-u^{n-1}\right)(x) \cdot  \left(u-u^{n-1}\right)(y)\,dx\,dy \\
   + \frac2h \int_\Omega \int_{\R^3\setminus \Omega} G_h(x-y) 
   \left(  1-u(x)\cdot \bar u(y)\right) dy\,dx  \Big\}.
    \end{multline}
    
    The fact that the last term can be interpreted as a penalization can be seen at its asymptotic behavior:
    Given $u, \bar{u} \colon \Omega \to \R^2$ and suppose $\Omega\subset \R^3$ is a smooth bounded domain. Then
    \[
    \lim_{h\to0} \frac1\h \int_\Omega \int_{\R^3\setminus \Omega} G_h(x-y) 
   \left(  1-u(x)\cdot \bar u(y)\right) dy\,dx  
   = \sigma\int_{\partial \Omega}2\left(1-u\cdot \bar u\right)
\,\,\,\,\,\,\Brac{\sigma=\Lover{\sqrt{\pi}}}.
    \]
    
    These asymptotics are not surprising in light of 
\cite[Lemma A.3]{EseOtt14} and \cite[Lemma 2.8]{laux2015convergence}. Indeed, it is shown there
that the measures $\frac1\h \left(1-\chi\right) G_h\ast \chi\,dx$ converge to $\sigma\left|\nabla \chi\right|$ with surface tension $\sigma$, so that for any pair of vector fields $u, \bar u$, 
the leading-order term of the left-hand side is of the form 
    \[\frac1\h\int 2(1-u\cdot \bar u)(1-\chi)G_h\ast \chi\,dx
\,\,\,\text{which converges to}\,\,\,
\sigma  \int 2(1-u\cdot \bar u) \left|\nabla \chi \right| \,\,\, \text{as }h\to 0  .\]

    It is important to point out the difference in the scaling factor in front of the integral in \eqref{MM Dirichlet}.
In particular, if the boundary data are well-prepared, i.e., $|\bar u| = 1$ on $\partial \Omega$, then as $h\downarrow 0$, the third term in \eqref{MM Dirichlet} behaves like
	\[
	\frac{2\sigma}\h \int_{\partial \Omega}\left(1-u\cdot \bar u\right) = \frac\sigma{\h} \int_{\partial \Omega}\left|u- \bar u\right|^2,
	\]
    so that this term is indeed a form of penalization forcing $u$ to assume 
the boundary values $u=\bar u$ on $\partial \Omega$.
    
    Based on the above, we now state the corresponding algorithm for the evolution with Dirichlet boundary conditions.
    \begin{algo}\Bf{(Dirichlet boundary conditions)}\label{algo Dirichlet}
      Let the initial and boundary conditions, and the time-step size be
$u^0 \colon \Omega \setminus \Gamma_0 \to \R^2$ and $\bar u \colon \R^3 \setminus \Omega \to \R^2$, and $h > 0$. 
Given the configuration $u^{n-1}$ at time $t=(n-1)h$, the configuration $u^n$ 
at time $t=nh$ is constructed the following two operations:
      \begin{enumerate}
       \item Diffusion of the by $\bar u$ extended vector field:
       \MathSty{v^n:= G_h\ast (\chara_\Omega u^{n-1} +\chara_{\R^3\setminus \Omega} \bar u)};
       \item Thresholding/Projection onto $\mathbb{S}^1$: 
\MathSty{u^n := \frac{v^n}{|v^n|} \quad \text{in }\Omega}.
      \end{enumerate}

    \end{algo}
    \subsection{Vortex motion with pinning}
      When studying vortices (points) in $\R^2$, the Ginzburg-Landau 
approximation as well as the thresholding scheme discussed above are trivial on the time scale 
under consideration in the sense that the vortices do not move. 
      The easiest way to obtain non-trivial motion goes by the name
``pinning'' which originates from a chemical potential 
$a(x)\geq a_0>0$. Lin \cite{lin1998complex} proved the convergence 
as $\eps\to 0$ of solutions to the equation
      \begin{equation}\label{GL pinning}
        \partial_t u_\eps = \frac1{a(x)} \nabla \cdot \left( a(x) \nabla u_\eps\right) - \frac1{\eps^2} \nabla_u W(u_\eps),
      \end{equation}
      where $W(u) = \frac14(|u|^2-1)^2$.
      The above is the gradient flow of the following weighted energy
      \Beqn\label{GL pinning eng}
	\int a(x)\left(\frac12 |\nabla u|^2 + \frac1{\eps^2} W(u)\right)dx
      \Eeqn
      \Beqn\label{GL pinning diss}
\frac{d}{dt}\int a(x)\left(\frac12 |\nabla u|^2 + \frac1{\eps^2} W(u)\right)dx
= -\int a(x)\abs{\partial_t u}^2.
      \Eeqn
      The motion law for the vortices in the limit $\eps \to0$ is then the ordinary differential equation 
      \[
	\dot X = - \frac{\nabla a(X)}{a(X)}\,\,\Big(= -\nabla \log a(X)\Big),
      \]
      which is again a gradient flow.

      The fact that $a(x)$ arises in both the energy \eqref{GL pinning eng} and 
the dissipation law \eqref{GL pinning diss} gives a hint that thresholding 
can be generalized to this setting as well.
      Indeed, as in the codimension one case, when extending thresholding to 
networks with arbitrary surface tensions, the ``natural'' mobilities 
(in the sense of the scheme) are inversely proportional to the surface tensions \cite{EseOtt14}. 
      
      Now the straight-forward generalization of the minimizing movements 
interpretation \eqref{MM} of thresholding to the vortex motion case is
      \begin{equation}\label{MM pinning}
        \frac1h \int a(x) \left(1-u\cdot G_h\ast u\right)dx + \frac1h \int a(x) \left( u-u^{n-1}\right) \cdot  G_h\ast\left( u-u^{n-1}\right) dx.
      \end{equation}
      Tracing back our steps in the argument for \eqref{MM}, we see that $u^n$ minimizes the linear functional
      \[
       -\frac1h \int  u \cdot \left( a\, G_h \ast u^{n-1} + G_h\ast(a\,u^{n-1})\right)\,dx  
      \]
      among all vector fields $u: \mathbb{R}^2\to\mathbb{R}^2$ with
$\Abs{u}\leq 1$ a.e. Therefore we obtain the following 
variant of thresholding for vortex motion.
      
    \begin{algo}\Bf{(Vortex motion)}\label{algo pinning}
      Let $\{X_1^0,\dots,X_M^0\}\subset ([0, \Lambda)^2)^M$ be the initial locations of vortices.
Let further the initial data and time-step size be
$u^0 \colon [0,\Lambda)^2 \setminus \{X_1^0,\dots,X_M^0\} \to \R^2$
and $h>0$.
Given the configuration $u^{n-1}$ at time $t=(n-1)h$, the configuration 
$u^n$ at time $t=nh$ is constructed by the following two operations:
      \begin{enumerate}
       \item Approximate convection-diffusion process:
       \MathSty{v^n:= G_h\ast (a\,u^{n-1}) + a\, G_h\ast u^{n-1}};
       \item Thresholding/Projection onto $\mathbb{S}^1$:
\MathSty{u^n := \frac{v^n}{|v^n|}}.
      \end{enumerate}
\end{algo}
Note that $v^n$ may be written as
\[
 v^n = 2a\,G_h\ast u^{n-1} + [G_h\ast, a] u^{n-1} \approx 2 a\,G_h \ast u^{n-1} + 2h \nabla a \cdot \nabla G_h \ast u^{n-1},
\]
so that another, yet a priori not energy dissipative, scheme can be obtained by replacing the first step of Algorithm \ref{algo pinning} by 
\[
v^n :=  a\,G_h \ast u^{n-1} + h \nabla a \cdot \nabla G_h \ast u^{n-1}.
\]

We mention in passing that the dynamical law \eqref{GL pinning} can be
changed to the following ``convection-diffusion-reaction'' form
      \Beqn\label{DCR}
        \partial_t u_\eps = \Delta u_\eps + V(x)\cdot\nabla u_\eps
- \frac1{\eps^2} \nabla_u W(u_\eps)
    \Eeqn
where $V$ is some arbitrary (smooth) vector field. 
Even though there is no ``global'' 
variational interpretation for \eqref{DCR} unless $V = \frac{\nabla a}{a}$
for some function $a$, the overall minimizing movements 
strategy and proof of convergence will still work as 
\emph{locally} at each point $x_0$, $V$ can always be approximated
as a gradient of some function. 
More specifically, we simply take $f(x) = \InnProd{V(x_0)}{(x-x_0)}$ 
and $a(x) = e^{f(x)}$ for $x$ near $x_0$. 
Then 
\[
\Abs{V(x) - \frac{\nabla a(x)}{a(x)}}
= \Abs{V(x) - \nabla f(x)} 
= \Abs{V(x) - V(x_0)}
= O(\Abs{x-x_0}). 
\]

For a thresholding scheme to take into account the convection term in \eqref{DCR} we perform an extra
step  in between
the diffusion and thresholding steps in which we deform the ambient domain along the flow generated by the vector field $V$. As long as $V$ has sufficient smoothness, this step 
will at most modify the thresholding energy by a prefactor of 
$1+O(h)$. Hence the overall energy will still remain bounded in finite time.

    \subsection{Harmonic map heat flow in higher dimensions}\label{sec:hmhf}
    The methods used in Section \ref{sec:filament} give a simple proof of convergence for the following time-splitting method for the harmonic map heat flow
    \begin{equation}\label{hmhf}
     \partial_t u -\Delta u = |\nabla u|^2 u
    \end{equation}
    with $u\colon \mathbb{T}^d \to \mathbb{S}^{N-1}$ with initial conditions in $H^1$. 
    
    \begin{algo}\Bf{(Harmonic heat flow)}\label{algo hmhf}
      Let the initial condition and time-step size be
$u^0 \colon \mathbb{T}^d \to \mathbb{S}^{N-1}$ and $h>0$.
Given the configuration $u^{n-1}$ at time $t=(n-1)h$, the configuration 
$u^n$ at time $t=nh$ is constructed by the following two operations:
      \begin{enumerate}
       \item Diffusion: convolve $u^{n-1}$ with the heat kernel, i.e., put $v^n:= G_h\ast u^{n-1}$;
       \item Thresholding/Projection onto $\mathbb{S}^{N-1}$: 
set $u^n := \frac{v^n}{|v^n|}$.
      \end{enumerate}

    \end{algo}
    
    \begin{prop}
    Given initial data $u^0\in H^1([0,\Lambda)^d,\R^N)$ with $|u^0|=1 $ a.e., the (piecewise constant interpolations of the) approximate solutions $u^h$ are pre-compact, i.e., there exists a subsequence $h\downarrow 0$ and a map $u\in H^1([0,\Lambda)^d\times(0,T),\R^N)$ with $|u|\leq 1 $ a.e.\ such that
    \begin{align}
      u^h \to &u & &\text{in } L^2([0,\Lambda)^d \times (0,T))\label{hmhf:comp u},\\
     \nabla G_{h} \ast u^h \rightharpoonup& \nabla u&  &\text{in }L^2([0,\Lambda)^d\times(0,T)) \label{hmhf:comp Du}\quad \text{and}\\
     \partial_t^h\big(G_{h} \ast u^h\big) \rightharpoonup &\partial_t u & &\text{in }L^2([0,\Lambda)^d\times(0,T)).\label{hmhf:comp dtu}
    \end{align}
    Furthermore,  $u$ solves the harmonic map heat flow equation \eqref{hmhf}.
    \end{prop}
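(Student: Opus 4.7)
The plan is to mimic the compactness and Euler--Lagrange passages carried out in Section \ref{sec:proof thm} for Theorem \ref{thm filament}, taking full advantage of the absence of singularities to avoid any localization. The starting point is that because $u^0 \in H^1$ with $|u^0|\equiv1$, Lemma \ref{lem energy 1}\eqref{approx Dir} gives
\[
E_h(u^0) \;\longrightarrow\; \int |\nabla u^0|^2\, dx \;<\;\infty ,
\]
so in particular $E_h(u^0)$ is uniformly bounded in $h$. Combining this with the global energy-dissipation estimate \eqref{energy estimate} and the sharp Dirichlet bound \eqref{D and E sharp} of Lemma \ref{lem energy 2}, I would immediately obtain the \emph{global} a priori estimate
\[
\sup_{t\in(0,T)} \int \bigl|\nabla G_{h/2}\ast u^h(t)\bigr|^2 dx \;+\; \int_0^T\!\!\int \bigl|G_{h/2}\ast \partial_t^h u^h\bigr|^2 dx\,dt \;\lesssim\; 1.
\]

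From these bounds I would extract a subsequence (not relabeled) along which $G_{h/2}\ast u^h$ converges weakly in $H^1(\mathbb{T}^d\times(0,T))$ to some limit $u$. A standard Aubin--Lions argument upgrades this to strong $L^2$-convergence, and since the representation \eqref{E finite diff} yields
\[
\|u^h - G_{h/2}\ast u^h\|_{L^2}^2 \;\lesssim\; h\,E_h(u^h)\;=\;O(h),
\]
the strong convergence passes from $G_{h/2}\ast u^h$ to $u^h$ itself, establishing \eqref{hmhf:comp u}. Because $|u^h|\equiv 1$ pointwise, a further subsequence gives $|u|=1$ a.e., so $u \in H^1(\mathbb{T}^d\times(0,T);\mathbb{S}^{N-1})$. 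The weak convergences \eqref{hmhf:comp Du}--\eqref{hmhf:comp dtu} follow by writing $G_h = G_{h/2}\ast G_{h/2}$ together with the fact that convolution by $G_{h/2}$ is a bounded operator converging strongly to the identity as $h\to 0$.

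With compactness in hand, the remaining task is to pass to the limit in the weak Euler--Lagrange equation \eqref{EL outer weak}, which is the analogue of the Chen--Struwe weak formulation \cite{chen1989weak} of harmonic map heat flow. Each term there is bilinear of the form (strongly convergent in $L^2$) $\times$ (weakly convergent in $L^2$), so the products pass to the limit in the sense of distributions, the $o(1)$ right-hand side disappears, and we arrive at
\[
\iint \bigl(u_j\,\partial_t u_i - u_i\,\partial_t u_j\bigr)\zeta \,+\, \bigl(u_j\,\nabla u_i - u_i\,\nabla u_j\bigr)\cdot\nabla\zeta \,dx\,dt \;=\; 0
\]
for every smooth test function $\zeta$. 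For $u\in H^1$ with $|u|=1$ a.e., this weak form is equivalent to the harmonic map heat flow equation \eqref{hmhf}, concluding the argument.

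The main obstacle I anticipate is a careful bookkeeping of the various smoothings $u^h$, $G_{h/2}\ast u^h$, $G_h \ast u^h$ so as to conclude that weak-times-strong passage works uniformly in each term of \eqref{EL outer weak}; beyond that, the proof is a direct simplification of the filament case, since the uniform (unlocalized) Dirichlet bound removes the need for the cutoff $\phi_\sigma$, the sharp monotonicity \eqref{D and E sharp} in its critical role, and the inner-variation Euler--Lagrange equation \eqref{EL inner}—only the outer variation is required because there is no separate vorticity set whose motion law must be captured.
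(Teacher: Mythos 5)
Your proposal is correct and follows essentially the same route as the paper: the paper's proof likewise notes that $E_h(u^0)$ is now uniformly bounded (the paper states the sharper pointwise bound $E_h(u^0)\leq\int|\nabla u^0|^2\,dx$ from the remark after Lemma \ref{lem energy 2}, while you invoke \eqref{approx Dir}; both suffice), deduces compactness from \eqref{energy estimate} together with \eqref{D and E sharp}, and then passes to the limit in the outer-variation Euler--Lagrange equation \eqref{EL outer weak} to reach the weak form \eqref{HMHF compensated compactness}. You simply spell out the details (the $O(h)$ closeness of $u^h$ and $G_{h/2}\ast u^h$, the strong-times-weak passage) that the paper's terse proof leaves implicit by reference to the filament case.
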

    
    \begin{proof}
    Note that the minimizing movements interpretation did not use that fact that the range of $u$ is only two-dimensional. In fact the proof applies line by line in this framework as well.
 	In particular, we have the a priori estimate \eqref{energy estimate}; with the important difference that now 
 	\[
 	E_h(u^0) \leq \int \left|\nabla u^0\right|^2 \quad \text{is uniformly bounded as }h\downarrow0.
 	\]
 	This allows us to prove the compactness statement of the theorem.
 	
    The convergences \eqref{hmhf:comp Du}--\eqref{hmhf:comp dtu} allow us to pass to the limit in the Euler-Lagrange equation \eqref{EL outer weak}, which establishes the equivalent weak form \eqref{HMHF compensated compactness} of \eqref{hmhf}.
    \end{proof}
    
\section{Appendix}
\subsection{Asymptotic expansion of thresholding scheme for filament}\label{appendix asymptotic expansion}
In this section, following \cite{ruuth2001diffusion}, we briefly describe the 
main steps in the asymptotic expansion of the thresholding scheme
demonstrating the appearance of filament curvature motion. Similar
asymptotics for the Ginzburg-Landau dynamics \eqref{GL} is derived 
in \cite{rubinsteinSelfInduced}.

As we are only dealing with the initial conditions $\Gamma_0$, $u^0$, let us omit the index $0$ in the following. 
Denoting $x=(x_1, x_2, x_3)=(x',x_3)\in\mathbb{R}^3$, we work in the geometry 
that the filament can be written as a graph over the $x_3$-axis. Precisely,
let the initial curve $\Gamma$ be given by
\[
\Gamma= \CurBrac{(\gamma_1(x_3), \gamma_2(x_3), x_3): x_3\in\mathbb{R}},
\,\,\,\text{for some smooth functions $\gamma_1$ and $\gamma_2$.}
\]
Identifying $x' =(x_1,x_2) = x_1 + ix_2$ 
and $\gamma= (\gamma_1,\gamma_2) = \gamma_1 + i\gamma_2$,
we use the following as the initial condition for the state variable $v=v(x,t)$:
\[
u(x) = \frac{x'-\gamma(x_3)}
{\abs{x'-\gamma(x_3)}}.
\]
Then the solution at time $t>0$ of the linear heat equation starting
from $u$ is given by
\Beqn\label{heat_conv}
v(x,t) = \Lover{(4\pi t)^{\frac{3}{2}}}
\int_{\mathbb{R}^3}
\exp\Brac{-\frac{\abs{z-x}^2}{4t}}
\frac{z'-\gamma(z_3)}
{\abs{z'-\gamma(z_3)}}
\,dz
\quad\text{where}\quad
z=(z',z_3),\,\,\,t>0.
\Eeqn
We now consider the \Em{outer} and \Em{inner} expansions of the above 
integration.

\Bf{(I) Outer expansion:
$\abs{x'-\gamma(x_3)}\gg \sqrt{t}$.}
For the outer expansion we introduce the length scale $\varepsilon$, the relative coordinate $y$ and the two complex numbers $\zeta$ and $\eta$ given by
\[
\varepsilon:=\sqrt{t},\quad
y := \frac{z - x}{\varepsilon},\quad
\zeta:=x'-\gamma(x_3),\quad
\eta := \varepsilon y' + \gamma(x_3) 
- \gamma(x_3 + \varepsilon y_3).
\]
We observe the following two asymptotics: First, since the parametrization $\gamma$ is smooth
\begin{equation}\label{expand eta}
\eta = \varepsilon(y' - \partial_{x_3}\gamma(y_3)\,y_3) + O(\varepsilon^2 y_3^2).
\end{equation}
Second, for any two complex numbers $\eta$ and $\zeta$
with $\abs{\eta}\ll \abs{\zeta}$ it holds
\[
\frac{\zeta + \eta}{\abs{\zeta + \eta}}
=\frac{\zeta}{\abs{\zeta}}
+ \Brac{\frac{\eta}{\abs{\zeta}} - \frac{\zeta}{\abs{\zeta}}
\text{Re}\Brac{\frac{\eta}{\zeta}}}
+ O\Big(\Abs{\frac{\eta}{\zeta}}^2\Big).
\]
Hence the integral \eqref{heat_conv} can be written as
\begin{eqnarray*}
v(x,t) = \frac{\zeta}{\abs{\zeta}}
+ \Lover{(4\pi)^\frac{3}{2}}\int_{\R^3}
e^{-\frac{\abs{y}^2}{4}}\Big[
\frac{\eta}{\abs{\zeta}} 
- \frac{\zeta}{\abs{\zeta}}\text{Re}\Brac{\frac{\eta}{\zeta}}
\Big]\,dy
+ \Lover{(4\pi)^\frac{3}{2}}\int_{\R^3}
e^{-\frac{\abs{y}^2}{4}}
O\Big(\Abs{\frac{\eta}{\zeta}}^2\Big)
\,dy.
\end{eqnarray*}
Note that the first term of the above expansion is simply $u(x)$.
The second, linear in $\eta$, is in fact almost an expectation
of a Gaussian variable, cf.\ \eqref{expand eta}. So its leading order term vanishes while the contribution form the second order term is $O(\varepsilon^2)$. Hence we conclude that
\Beqn\label{outer-phase-correction}
v(x,t) = u(x) + O(t),
\quad\text{leading to}\quad
\frac{v(x,t)}{\abs{v(x,t)}} = \exp\big(i(\theta + O(t))\big)
\Eeqn
where $\theta$ is the initial phase variable for $u(x)$.

\Bf{(II) Inner expansion:
$\Abs{x' - \gamma(x_3)}\lesssim \sqrt{t}$.} 
Introducing
a constant $\delta \ll 1$ and the new spatial and temporal variables 
$\eta = \frac{x' - \gamma(x_3)}{\delta}$ and 
$\tau = \frac{t}{\delta^2}$, we expand $v(x,t)$ as
\Beqn \label{inner expansion v}
v(x,t) = Ae^{iS} = (A_0 + \delta A_1 + \cdots)e^{i(S_0 + \delta S_1 + \cdots)}.
\Eeqn
Substituting the above into the heat equation $\partial_t v = \Delta v$, we obtain
\begin{eqnarray}
\partial_\tau S - \Delta S + 2\frac{\nabla A}{A}\cdot\nabla S 
+ \delta(\kappa \mathbf{N}-\dot{\Gamma}_t)\cdot\nabla S & = & O(\delta^2),\\
\partial_\tau A - \Delta A + \delta(\kappa \mathbf{N}-\dot{\Gamma}_t)\cdot\nabla A
+ \Abs{\nabla S}^2A & = & O(\delta^2).
\end{eqnarray}
The initial conditions for $S$ and $A$ are $\theta$ and $1$, respectively.

For the $O(1)$ terms in \eqref{inner expansion v} we have
\[
\partial_\tau {S_0} - \Delta S_0 + 2\frac{\nabla A_0}{A_0}\cdot\nabla S_0 = 0,
\quad\text{and}\quad
\partial_\tau {A_0} - \Delta A_0 + \Abs{\nabla S_0}^2A_0 = 0.
\]
The solutions are respectively, $S_0(\eta, \tau) = \theta$ and 
$A_0(\eta, \tau) = A\Brac{\frac{\eta^2}{\tau}}$ 
where the function $A:\mathbb{R}_+\to\mathbb{R}_+$ has the following 
asymptotic behavior:
\[
A(z) \approx \left\{\begin{array}{ll}
z^\Lover{2}(c_0 + c_1z + \cdots), & \text{for $z\ll 1$},\\
e^{-\Lover{z}}(1 + c_2z^{-2} + \cdots), & \text{for $z \gg 1$}.
\end{array}
\right.
\]

Incorporating the $O(\delta)$ term in \eqref{inner expansion v} and making the ansatz that $S$ converges
to a steady state $S_\infty$ as $\eta\to\infty$, we obtain
\Beqn\label{S_eqn_2}
-\Delta S_\infty + \delta(\kappa\mathbf{N} - \dot{\Gamma}_t)\cdot\nabla S_\infty
= O(\delta^2).
\Eeqn
The solution to this equation is given by
\Beqn\label{S_soln_infty}
S_\infty = \eta\int_0^\theta\Big[
G_\eta(\eta, \varphi) 
+ \delta(\kappa\mathbf{N} - \dot{\Gamma}_t)\cdot(\cos\varphi, \sin\varphi)
G(\eta, \varphi)\Big]
\,d\varphi,
\Eeqn
where
\begin{align}\label{Green_formula}
G(\eta,\theta) = & 
-\exp\CurBrac{\frac{\delta}{2}(\kappa\mathbf{N} - \dot{\Gamma}_t)\cdot
(\eta\cos\theta, \eta\sin\theta)}
K_0\Brac{\frac{\delta\eta\abs{\kappa\mathbf{N} - \dot{\Gamma}_t}}{2}},\\
\text{with}\,\,\,
K_0(x) = & - \log\frac{x}{2} + \text{const}\,\,\,
\text{for $x\ll 1$ being the zeroth-order modified Bessel function.}\notag
\end{align}
Substituting \eqref{Green_formula} into \eqref{S_soln_infty}, we obtain
\Beqn\label{inner-exp-angle}
S_\infty = \theta -\frac{3}{2} r
\Big\langle \kappa\mathbf{N} - \dot{\Gamma}_t,
\big(\sin\theta, 1-\cos\theta\big)\Big\rangle
K_0\Big(\frac{r\abs{\kappa\mathbf{N} - \dot{\Gamma}_t}}{2}\Big)
+ O(r\abs{\kappa\mathbf{N} - \dot{\Gamma}_t}),
\Eeqn
where $r = \Abs{x' - \gamma(x_3)}$. 

Now comparing \eqref{inner-exp-angle} 
with \eqref{outer-phase-correction} from the outer expansion, 
we conclude that
\[
\abs{\kappa\mathbf{N} - \dot{\Gamma}_t}
K_0\Big(\frac{\abs{\kappa\mathbf{N} - \dot{\Gamma}_t}}{2}\Big)
+ O(\abs{\kappa\mathbf{N} - \dot{\Gamma}_t}) = O(t),
\]
so that
\[
\dot{\Gamma}_t - \kappa\mathbf{N} = O\Brac{\frac{t}{\abs{\log t}}} = o(t).
\]
Hence the thresholding scheme is consistent for $t \ll 1$. 

Switching back to the notation $h$ for the time step size, 
we note here that the one-step and accumulative errors are respectively,
$O\Big(\frac{h}{\abs{\log h}}\Big)$
and
$O\Big(\Lover{\abs{\log h}}\Big)$.
We defer to future work in making the above asymptotic analysis,
in particular the error estimates, rigorous.

\subsection{Construction of initial conditions}\label{appendix:u0}
 
   We derive the appropriate bounds for the ansatz \eqref{construct u0} for the initial conditions to show they are well-prepared according to Definition \ref{def initial} . As we will only deal with the initial conditions $\Gamma_0$ and $u^0$, we may as well omit the index $0$ in this section. 
   Let $\Gamma \subset \R^3$ be a curve given by
\[
\Gamma = \{(\gamma(x_3),x_3) \colon x_3\in[0,1)\},
\]
where $\gamma = (\gamma_1,\gamma_2) \colon [0,1)\to \R^2$ is a smooth periodic 
vector field. We define
\Beqn\label{appendix construct u0}
u(x) :=  \frac{x'-\gamma(x_3)}{|x'-\gamma(x_3)|}\quad \text{for }x=(x',x_3)\in \R^3\setminus \Gamma.
\Eeqn

By Lemma \ref{lem energy 1} the energy of $u$ can be written as an  average of squared finite differences. More precisely, using \eqref{E finite diff} with $\psi = \chara_\Omega$, we can write the energy in any bounded open set $\Omega \subset [0,\Lambda) \times \R^2$ as
\[
E_h(u,\chara_{\Omega}) =   \frac12  \int_{\R^3} G(z) \int_{\Omega}\left| \frac{u(x) -u(x-\h z)}{\h} \right|^2 dx\, dz.
\]
Now we split the domain of integration in $x$ into a near-field region, which is the $\h$-neighborhood of the filament $A_h := \{x \in \Omega \colon d(x,\Gamma_0)<\h\}$, and its complement, the far-field region. Using $|u|= 1$,  the integral over the near-field region is estimated by
\[
 \frac12\int_{\R^3} G(z) \int_{A_h} \left| \frac{u(x) -u(x-\h z)}{\h} \right|^2  dx\,dz
\leq \frac12\int_{\R^3}  G(z)\int_{A_h}  \frac4h dx\, dz \leq\frac{2}{h} \left|A_h\right|.
 \]
Since the tubular neighborhood $A_h$ of the smooth curve $\Gamma_0$ has Lebesgue measure $|A_h| \leq C h \H^1(\Gamma_0)$ for sufficiently small $h$, the right-hand side is uniformly bounded as $h\downarrow0$.

  The leading-order term of the energy is the integral over the far-field region, which has precisely the asymptotic behavior \eqref{Eu0}. Indeed, the trivial inequality 
  $
  \big| \frac{p}{|p|} - \frac{q}{|q|} \big| \leq 2 \frac{|p-q|}{|p|} 
  $ 
  (which is valid for any two non-zero vectors $p,q$) applied to $p=x'-\gamma(x_3)$ and $q=x'-\h z' - \gamma(x_3-\h z_3)$  implies 
 \[
 \left|\frac{u(x)-u(x-\h z)}{\h}\right|^2
  \leq 4 \frac{|\h z'|^2 + |\gamma(x_3)-\gamma(x_3-\h z_3)|^2}{h|x'-\gamma(x_3)|^2}
 \leq 4 \frac{(1+\|\partial_{x_3} \gamma\|_\infty^2) |z|^2}{|x'-\gamma(x_3)|^2}
 \]
 for all $z\in \R^3$.
 Therefore, the integral over the far-field region is estimated by
 \begin{align*}
	&\frac12 \int_{\R^3} G(z)\int_{\Omega \setminus A_h}  \left| \frac{u(x) -u(x-\h z)}{\h} \right|^2 dx\, dz\\
	&\leq 2(1+\|\partial_{x_3} \gamma\|_\infty^2) \left( \int_{\R^3} |z|^2 G(z)\,dz \right) \int_{\Omega \setminus A_h}\frac{1}{|x'-\gamma(x_3)|^2} dx.
\end{align*}
The prefactor as well ass the Gaussian integral are clearly bounded. If $R<\infty$ is sufficiently  large such that $\Omega \subset B_{R}$ then the last integral can be estimated by
\begin{align*}
\int_{\Omega \setminus A_h}\frac{1}{|x'-\gamma(x_3)|^2} dx 
&\leq \int_0^1 \int_{\{\h < |x'| < 2R\}} \frac{1}{|x'|^2} dx' \,dx_3
=  2\pi \int_{\h}^{2R} \frac1{r^2} r\,dr \\
& = 2\pi  (\log (2R)-\log (\h)) = C(R) (1+ |\log h|).
\end{align*}
This finishes the proof of the upper bound
\[
E_h(u,\chara_\Omega) \leq C |\log h|.
\]
The matching lower bound
\[
E_h(u,\chara_\Omega) \geq c |\log h|
\] 
can be obtained by a  reverse variant of the basic inequality $
  \big| \frac{p}{|p|} - \frac{q}{|q|} \big| \leq 2 \frac{|p-q|}{|p|} 
  $ above, namely $\big| \frac{p}{|p|} - \frac{q}{|q|} \big| \geq \frac1{|p|} \big( \big| p-q\big| - \big| |p|-|q| \big| \big)$.
Furthermore, the uniform bound on the energy away from the filament \eqref{Eu0phi} follows from the derivation of the upper bound above.

When working with several filaments, as for example a periodic pattern of almost parallel filaments, again with periodic boundary conditions in the $x_3$-direction, the vector fields \eqref{appendix construct u0} around each filament with the appropriate choices of the sign may be easily glued together.
We also want to stress that this construction is not restricted to dimension $3$, but applies in general codimension-$2$ surface $\Gamma$ in $\R^d$.

\subsection{Cut-off at infinity}\label{appendix whole space}
When adapting our proof to the whole space $\mathbb{R}^3$ one has to be careful, as the squared gradient as well as our energy densities are not integrable at infinity. 
Note for example that the gradient of the unit vector field $u(x) = \frac {x}{|x|}$ decays with rate $ \frac1{|x|}$, which is not in $L^2(\R^2)$. By slicing it is clear that the behavior for the initial conditions discussed in Appendix \ref{appendix construct u0} is divergent as well.
This can be cured by choosing an appropriate cut-off at infinity. More precisely, the function $\phi_\sigma(x,t)$ given in \eqref{def phi} which we used to localize around the mean curvature flow $\Gamma_t$ can then be replaced by $\phi_\sigma(x,t) \psi_R(|x|)$, where $\psi_R=\psi_R(r)$ is a smooth monotone non-increasing function with $\psi_R(r)=1 $ for $0\leq r\leq R$ and $\psi(r) = \exp(-r)$ for $r\geq 2R$ such that $|\frac{d^k}{dr^k} \psi_R|  \leq C_k\psi_R$ for all $k\in \N$.

\bigskip

\bigskip

\noindent\Bf{Acknowledgement.}
The authors thank Selim Esedo\u{g}lu, Felix Otto, and Drew Swartz for useful 
discussion.
The support by the Purdue Research Foundation and the hospitality of the 
Max Planck Institute for the Mathematical Sciences, Leipzig, Germany, 
are highly noted.

    
\bibliographystyle{plain}

\bibliography{lit}

\begin{thebibliography}{10}

\bibitem{allen1979microscopic}
Samuel~M. Allen and John~W. Cahn.
\newblock A microscopic theory for antiphase boundary motion and its
  application to antiphase domain coarsening.
\newblock {\em Acta Metallurgica}, 27(6):1085--1095, 1979.

\bibitem{ATW93}
Fred Almgren, Jean~E. Taylor, and Lihe Wang.
\newblock Curvature-driven flows: a variational approach.
\newblock {\em SIAM Journal on Control and Optimization}, 31(2):387--438, 1993.

\bibitem{altschuler1992shortening}
Steven~J. Altschuler and Matthew~A. Grayson.
\newblock Shortening space curves and flow through singularities.
\newblock {\em Journal of Differential Geometry}, 35(2):283--298, 1992.

\bibitem{ambrosio2006gradient}
Luigi Ambrosio, Nicola Gigli, and Giuseppe Savar{\'e}.
\newblock {\em Gradient flows in metric spaces and in the space of probability
  measures}.
\newblock Birkh\"auser, 2008.

\bibitem{ambrosio1996level}
Luigi Ambrosio and H.~Mete Soner.
\newblock Level set approach to mean curvature flow in arbitrary codimension.
\newblock {\em Journal of Differential Geometry}, 43:693--737, 1996.

\bibitem{AmbrosioSonerVar}
Luigi Ambrosio and H.~Mete Soner.
\newblock A measure-theoretic approach to higher codimension mean curvature
  flows.
\newblock {\em Annali della Scuola Normale Superiore di Pisa. Classe di
  Scienze. Serie IV}, 25(1-2):27--49, 1997.

\bibitem{barles1995simple}
Guy Barles and Christine Georgelin.
\newblock A simple proof of convergence for an approximation scheme for
  computing motions by mean curvature.
\newblock {\em SIAM Journal on Numerical Analysis}, 32(2):484--500, 1995.

\bibitem{BBHBook}
Fabrice Bethuel, Ha\"{\i}m Brezis, and Fr\'ed\'eric H\'elein.
\newblock {\em Ginzburg-{L}andau vortices}, volume~13 of {\em Progress in
  Nonlinear Differential Equations and their Applications}.
\newblock Birkh\"auser Boston, Inc., Boston, MA, 1994.

\bibitem{bethuel2006convergence}
Fabrice Bethuel, Giandomenico Orlandi, and Didier Smets.
\newblock Convergence of the parabolic {G}inzburg-{L}andau equation to motion
  by mean curvature.
\newblock {\em Annals of Mathematics}, 163(1):37--163, 2006.

\bibitem{bonnetier2012consistency}
Eric Bonnetier, Elie Bretin, and Antonin Chambolle.
\newblock Consistency result for a non monotone scheme for anisotropic mean
  curvature flow.
\newblock {\em Interfaces and Free Boundaries}, 14(1):1--35, 2012.

\bibitem{brakke1978motion}
Kenneth~A. Brakke.
\newblock {\em The motion of a surface by its mean curvature}, volume~20.
\newblock Princeton University Press, Princeton, 1978.

\bibitem{bronsard1991motion}
Lia Bronsard and Robert~V. Kohn.
\newblock Motion by mean curvature as the singular limit of {G}inzburg-{L}andau
  dynamics.
\newblock {\em Journal of Differential Equations}, 90(2):211--237, 1991.

\bibitem{chen1992generation}
Xinfu Chen.
\newblock Generation and propagation of interfaces for reaction-diffusion
  equations.
\newblock {\em Journal of Differential Equations}, 96:116--141, 1992.

\bibitem{chen1991uniqueness}
Yun~G. Chen, Yoshikazu Giga, and Shun’ichi Goto.
\newblock Uniqueness and existence of viscosity solutions of generalized mean
  curvature flow equations.
\newblock {\em Journal of Differential Geometry}, 33(3):749--786, 1991.

\bibitem{chen1989weak}
Yunmei Chen.
\newblock The weak solutions to the evolution problems of harmonic maps.
\newblock {\em Mathematische Zeitschrift}, 201(1):69--74, 1989.

\bibitem{de1993new}
Ennio De~Giorgi.
\newblock New problems on minimizing movements.
\newblock {\em Boundary Value Problems for PDE and Applications}, 29:91--98,
  1993.

\bibitem{de1994barriers}
Ennio De~Giorgi.
\newblock Barriers, boundaries, motion of manifolds.
\newblock volume~18, 1994.

\bibitem{de1995geometrical}
Piero De~Mottoni and Michelle Schatzman.
\newblock Geometrical evolution of developed interfaces.
\newblock {\em Transactions of the American Mathematical Society},
  347(5):1533--1589, 1995.

\bibitem{elsey2016threshold}
Matt Elsey and Selim Esedo\u{g}lu.
\newblock Threshold dynamics for anisotropic surface energies.
\newblock Technical report, UM, 2016.

\bibitem{EseOtt14}
Selim Esedo\u{g}lu and Felix Otto.
\newblock Threshold dynamics for networks with arbitrary surface tensions.
\newblock {\em Communications on Pure and Applied Mathematics}, 68(5):808--864,
  2015.

\bibitem{evans1990weak}
Lawrence~C. Evans.
\newblock {\em Weak convergence methods for nonlinear partial differential
  equations}.
\newblock Number~74. American Mathematical Soc., 1990.

\bibitem{evans1993convergence}
Lawrence~C. Evans.
\newblock Convergence of an algorithm for mean curvature motion.
\newblock {\em Indiana University Mathematics Journal}, 42(2):533--557, 1993.

\bibitem{evans1992phase}
Lawrence~C. Evans, H.~Mete Soner, and Panagiotis~E. Souganidis.
\newblock Phase transitions and generalized motion by mean curvature.
\newblock {\em Communications on Pure and Applied Mathematics},
  45(9):1097--1123, 1992.

\bibitem{EvansSpruck1}
Lawrence~C. Evans and Joel Spruck.
\newblock Motion of level sets by mean curvature i.
\newblock {\em Journal of Differential Geometry}, 33(3):635--681, 1991.

\bibitem{gage1986heat}
Michael Gage and Richard~S. Hamilton.
\newblock The heat equation shrinking convex plane curves.
\newblock {\em Journal of Differential Geometry}, 23(1):69--96, 1986.

\bibitem{HuiskenPoldenGraphProof}
Gerhard Huisken and Alexander Polden.
\newblock Geometric evolution equations for hypersurfaces.
\newblock In {\em Calculus of variations and geometric evolution problems
  ({C}etraro, 1996)}, volume 1713 of {\em Lecture Notes in Math.}, pages
  45--84. Springer, Berlin, 1999.

\bibitem{ilmanen1993convergence}
Tom Ilmanen.
\newblock Convergence of the {A}llen-{C}ahn equation to {B}rakkes motion by
  mean curvature.
\newblock {\em Journal of Differential Geometry}, 38(2):417--461, 1993.

\bibitem{ishiiGeneralRadialKernel}
Hitoshi Ishii.
\newblock A generalization of the {B}ence, {M}erriman and {O}sher algorithm for
  motion by mean curvature.
\newblock In {\em Curvature flows and related topics ({L}evico, 1994)},
  volume~5 of {\em GAKUTO Internat. Ser. Math. Sci. Appl.}, pages 111--127.
  Gakk\=otosho, Tokyo, 1995.

\bibitem{ishii1999threshold}
Hitoshi Ishii, Gabriel~E. Pires, and Panagiotis~E. Souganidis.
\newblock Threshold dynamics type approximation schemes for propagating fronts.
\newblock {\em Journal of the Mathematical Society of Japan}, 51(2):267--308,
  1999.

\bibitem{jerrardICM}
Robert~L. Jerrard.
\newblock Quantized vortex filaments in complex scalar fields.
\newblock In {\em Proceedings of the {I}nternational {C}ongress of
  {M}athematicians---{S}eoul 2014. {V}ol. {III}}, pages 789--810. Kyung Moon
  Sa, Seoul, 2014.

\bibitem{JerrardSonerMCF}
Robert~L. Jerrard and H.~Mete Soner.
\newblock Scaling limits and regularity results for a class of
  {G}inzburg-{L}andau systems.
\newblock {\em Ann. Inst. H. Poincar\'e Anal. Non Lin\'eaire}, 16(4):423--466,
  1999.

\bibitem{laux2015convergence}
Tim Laux and Felix Otto.
\newblock Convergence of the thresholding scheme for multi-phase
  mean-cur\-va\-ture flow.
\newblock {\em Calculus of Variations and Partial Differential Equations},
  55(5):1--74, 2016.

\bibitem{laux2017brakke}
Tim Laux and Felix Otto.
\newblock Brakke's inequality for the thresholding scheme.
\newblock {\em arXiv preprint arXiv:1708.03071}, 2017.

\bibitem{LauxSimon}
Tim Laux and Thilo Simon.
\newblock Convergence of the {A}llen-{C}ahn equation to multiphase mean
  curvature flow.
\newblock {\em To appear in Communications on Pure and Applied Mathematics,
  DOI:10.1002/cpa.21747}, 2018.

\bibitem{LauSwa15}
Tim Laux and Drew Swartz.
\newblock Convergence of thresholding schemes incorporating bulk effects.
\newblock {\em Interfaces and Free Boundaries}, 55(2):273--304, 2017.

\bibitem{lin1998complex}
Fang~Hua Lin.
\newblock Complex {G}inzburg-{L}andau equations and dynamics of vortices,
  filaments, and codimension-2 submanifolds.
\newblock {\em Communications on Pure and Applied Mathematics}, 51(4):385--441,
  1998.

\bibitem{linPanWangHarmonic}
Fanghua Lin, Xing-Bin Pan, and Changyou Wang.
\newblock Phase transition for potentials of high-dimensional wells.
\newblock {\em Comm. Pure Appl. Math.}, 65(6):833--888, 2012.

\bibitem{LucStu95}
Stephan Luckhaus and Thomas Sturzenhecker.
\newblock Implicit time discretization for the mean curvature flow equation.
\newblock {\em Calculus of Variations and Partial Differential Equations},
  3(2):253--271, 1995.

\bibitem{MBO92}
Barry Merriman, James~K. Bence, and Stanley~J. Osher.
\newblock Diffusion generated motion by mean curvature.
\newblock CAM Report 92-18, 1992.
\newblock Department of Mathematics, University of California, Los Angeles.

\bibitem{osting2017generalized}
Braxton Osting and Dong Wang.
\newblock A generalized {MBO} diffusion generated motion for orthogonal
  matrix-valued fields.
\newblock {\em arXiv preprint arXiv:1711.01365}, 2017.

\bibitem{rubinsteinPismen}
L.~M. Pismen and J.~Rubinstein.
\newblock Motion of vortex lines in the {G}inzburg-{L}andau model.
\newblock {\em Physica D. Nonlinear Phenomena}, 47(3):353--360, 1991.

\bibitem{rubinsteinSelfInduced}
Jacob Rubinstein.
\newblock Self-induced motion of line defects.
\newblock {\em Quarterly of Applied Mathematics}, 49(1):1--9, 1991.

\bibitem{rubinstein1989fast}
Jacob Rubinstein, Peter Sternberg, and Joseph~B. Keller.
\newblock Fast reaction, slow diffusion, and curve shortening.
\newblock {\em SIAM Journal on Applied Mathematics}, 49(1):116--133, 1989.

\bibitem{rubinsteinHarmonic}
Jacob Rubinstein, Peter Sternberg, and Joseph~B. Keller.
\newblock Reaction-diffusion processes and evolution to harmonic maps.
\newblock {\em SIAM Journal on Applied Mathematics}, 49(6):1722--1733, 1989.

\bibitem{ruuth2001diffusion}
Steven~J. Ruuth, Barry Merriman, Jack Xin, and Stanley Osher.
\newblock Diffusion-generated motion by mean curvature for filaments.
\newblock {\em Journal of Nonlinear Science}, 11(6):473--493, 2001.

\bibitem{SerfatySandierBook}
Etienne Sandier and Sylvia Serfaty.
\newblock {\em Vortices in the magnetic {G}inzburg-{L}andau model}, volume~70
  of {\em Progress in Nonlinear Differential Equations and their Applications}.
\newblock Birkh\"auser Boston, Inc., Boston, MA, 2007.

\bibitem{SwaYip17}
Drew Swartz and Nung~Kwan Yip.
\newblock Convergence of diffusion generated motion to motion by mean
  curvature.
\newblock {\em Communications in Partial Differential Equations},
  42(10):1598--1643, 2017.

\bibitem{WangMuTaoClassical}
Mu-Tao Wang.
\newblock Long-time existence and convergence of graphic mean curvature flow in
  arbitrary codimension.
\newblock {\em Inventiones Mathematicae}, 148(3):525--543, 2002.

\bibitem{WangMuTaoReview1}
Mu-Tao Wang.
\newblock Mean curvature flows in higher codimension.
\newblock In {\em Second {I}nternational {C}ongress of {C}hinese
  {M}athematicians}, volume~4 of {\em New Stud. Adv. Math.}, pages 275--283.
  Int. Press, Somerville, MA, 2004.

\bibitem{WangMuTaoReview2}
Mu-Tao Wang.
\newblock Lectures on mean curvature flows in higher codimensions.
\newblock In {\em Handbook of geometric analysis. {N}o. 1}, volume~7 of {\em
  Adv. Lect. Math. (ALM)}, pages 525--543. Int. Press, Somerville, MA, 2008.

\end{thebibliography}

    \end{document}